\newtheorem{theorem}{Theorem}[section]
\newtheorem{lemma}[theorem]{Lemma}
\newtheorem{proposition}[theorem]{Proposition}
\newtheorem{corollary}[theorem]{Corollary}
\theoremstyle{definition}
\newtheorem{definition}[theorem]{Definition}
\theoremstyle{remark}
\newtheorem{remark}[theorem]{Remark}
\numberwithin{equation}{section}
\begin{document}
\setcounter{page}{1}
	
\title[Semi-Fredholm theory on Hilbert $C^*$-modules]{Semi-Fredholm theory on Hilbert $C^*$-modules }
\author[S. Ivkovi\'{c}]{Stefan Ivkovi\'{c}}
\email{\textcolor[rgb]{0.00,0.00,0.84}{stefan.iv10@outlook.com}}	
\address{The Mathematical institute of the Serbian Academy of Sciences and Arts}

\let\thefootnote\relax\footnote{Copyright 2019 The Mathematical Institute of the Serbian Academy of Sciences and Arts, Kneza Mihaila 36, 11000 Beograd, p.p. 367, Serbia, office@mi.sanu.ac.rs}
\email{\textcolor[rgb]{0.00,0.00,0.84}{office@mi.sanu.ac.rs}}
\subjclass[2010]{Primary 47A53; Secondary 46L08.}
	
\keywords{Hilbert $C^*$-module, semi-${\mathcal{A}}$-Fredholm Operator, Calkin algebra.}

\begin{abstract}
In this paper we establish the semi-Fredholm theory on Hilbert $C^*$-modules as a continuation of Fredholm theory on Hilbert $C^*$-modules established by Mishchenko and Fomenko. We give a definition of a semi-Fredholm operator on Hilbert $C^*$-module and prove that these semi-Fredholm operators are those that are one-sided invertible modulo compact operators, that the set of proper semi-Fredholm operators is open and many other results that generalize their classical counterparts.
\end{abstract} \maketitle
\section{Introduction }
The Fredholm and semi-Fredholm theory on Hilbert and Banach spaces started by studying the certain integral equations which was done in the pioneering work by Fredholm in 1903 in \cite{F}. After that the abstract theory of Fredholm and semi-Fredholm operators on Banach spaces was during the time developed in numerous papers. Some recent results in the classical semi-Fredholm theory can be found in \cite{ZZDH}. Now, Fredholm theory on Hilbert $C^*$-modules as a generalization of Fredholm theory on Hilbert spaces was started by Mishchenko and Fomenko in \cite{MF}. They have elaborated the notion of a Fredholm operator on the standard module $H_{\mathcal{A}} $ and proved the generalization of the Atkinson theorem. Our aim is to study more general operators than the Fredholm ones, namely a generalization of semi-Fredholm operators. In this paper we give the definition of those and establish several properties as an analogue or a generalized version of the properties of the classical semi-Fredholm operators on Hilbert and Banach spaces. \\
Recall that if $H$ is a Hilbert space, then $F$ is a semi-Fredholm operator on $H$, denoted by $F \in \Phi_{\pm} (H) $ if $F \in B (H) $ and ${\rm ran} F$ is closed, that is if there exists a decomposition
$$ H = (\ker F)^{\perp} \oplus \ker F‎‎\stackrel{F}{\longrightarrow} {\rm ran} F \oplus ({\rm ran} F)^{\perp} =H $$ 
with respect to which $F$ has the matrix
$\left\lbrack
\begin{array}{ll}
F_{1} & 0 \\
0 & 0 \\
\end{array}
\right \rbrack,
$
where $ F_{1} $ is an isomorphism, and either
$$\dim \ker F < \infty \textrm{ or } \dim ({\rm ran} F)^{\perp} < \infty .$$ 
If $\dim \ker F < \infty ,$ then $F$ is called an upper semi-Fredholm operator on $H,$ denoted by $F \in \Phi_{+} (H) $ whereas if $\dim ({\rm ran} F)^{\perp} < \infty ,$ then $F$ is called 
a lower semi-Fredholm operator on $H $ denoted by $F \in \Phi_{-} (H) .$ If $F$ is both an upper and lower semi-Fredholm operator on $H$, then $F$ is said to be a Fredholm operator on $H$ denoted by $F \in \Phi (H) .$
In the case when $F \in \Phi (H) $ , the index of $F$ is defined as ${\rm index} F=\dim \ker F- \dim ({\rm ran} F)^{\perp}.$ Now, Hilbert $ C^{*} $-modules are a natural generalization of Hilbert spaces when the field of scalars is replaced by an arbitrary $ C^{*}$-algebra. Some recent results in the theory of Hilbert $C^*$-modules can be found in \cite{FMT}, \cite{HG}, \cite{L}, \cite{MSFC}, \cite{S3}. In \cite{MF} one considers a standard Hilbert $ C^{*}$-modul over a unital $ C^{*}$-algebra $\mathcal{A}$, denoted by $ H_{\mathcal{A}} $ and one defines an $\mathcal{A}$-Fredholm operator $F$ on $H_{\mathcal{A}} $ as generalization of a Fredholm operator on Hilbert space $H$ in the following way:
\cite[ Definition]{MF} A (bounded $\mathcal{A}$ linear) operator $F: H_{\mathcal{A}} \rightarrow H_{\mathcal{A}}$ is called $\mathcal{A}$-Fredholm if\\
1) it is adjointable;\\
2) there exists a decomposition of the domain $H_{\mathcal{A}}= M_{1} \tilde{\oplus} N_{1} ,$ and the range, $H_{\mathcal{A}}= M_{2} \tilde{\oplus} N_{2}$, where $M_{1},M_{2},N_{1},N_{2}$ are closed $\mathcal{A}$-modules and $N_{1},N_{2}$ have a finite number of generators, such that $F$ has the matrix from 
\begin{center}
	$\left\lbrack
	\begin{array}{ll}
	F_{1} & 0 \\
	0 & F_{4} \\
	\end{array}
	\right \rbrack
	$
\end{center}
with respect to these decompositions and $F_{1}:M_{1}\rightarrow M_{2}$ is an isomorphism.\\
It is then proved in \cite{MF} that some of the main results from the classical Fredholm theory on Hilbert spaces also hold when one considers this generalization of Fredholm operator on Hilbert $ C^{*}$-module. The idea in this paper was to go further in this direction, to give, in a similar way, a definition of semi-Fredholm operators on $H_{\mathcal{A}}  ,$ to investigate and prove generalized version in this setting of significantly many results from the classical semi-Fredholm theory on Hilbert and Banach spaces. \\
In the second section, inspired by \cite[ Definition]{MF}, we define upper and lower semi-$\mathcal{A}$-Fredholm operators on $ H_{\mathcal{A}}   .$ We say that F is an upper semi-$\mathcal{A}$-Fredholm operator on $H_{\mathcal{A}} $ if all the conditions in the definition above hold except that $N_{2}$ does not need to be finitely generated and we say that $F$ is a lower semi-$\mathcal{A}$-Fredholm operator on $H_{\mathcal{A}}$ if all the conditions in the definition above hold except that $N_{1}$ in this case does not need to be finitely generated. Then we show that the classes of upper and lower semi-$\mathcal{A}$-Fredholm operators on $H_{\mathcal{A}} $ denoted respectively by $\mathcal{M}\Phi_{+}(H_{\mathcal{A}})$ and $\mathcal{M}\Phi_{-}(H_{\mathcal{A}}) $ coincide with the inverse images of the sets of the left and right invertible elements respectively in the $ C^{*}$-algebra $ B^{a}(H_{\mathcal{A}}) / K (H_{\mathcal{A}}) $ under the quotient map, where $ B^{a}(H_{\mathcal{A}})$ denotes the $C^{*}$-algebra of all bounded, adjointable operators on $H_{\mathcal{A}} .$ Semi-$\mathcal{A}$-Fredholm operators on $H_{\mathcal{A}}$ have been considered in \cite{AH} and \cite{HA}. In \cite{AH} they define semi-$\mathcal{A}$-Fredholm operators to be those that are one-sided invertible modulo compact operators. However, in this paper we give another definition of semi-$\mathcal{A}$-Fredholm operators on $H_{\mathcal{A}}$ and then prove that these operators are exactly those that are one-sided invertible modulo compact operators. Moreover, we prove an analogue or generalized versions of main results in \cite[ Section 1.2] {ZZRD} and \cite[ Section 1.3] {ZZRD}, as well as some additional new results. We wish to remark that if one considers the classes of operators  $\mathcal{M}\Phi(H_{\mathcal{A}}),\mathcal{M}\Phi_{+}(H_{\mathcal{A}}),\mathcal{M}\Phi_{-}(H_{\mathcal{A}}) $ in the sense of Definition 2.1 in this paper, which are given in terms of the decompositions for given $ F,$ it is not obvious that $ \mathcal{M}\Phi(H_{\mathcal{A}})=\mathcal{M}\Phi_{+}(H_{\mathcal{A}}) \cap \mathcal{M}\Phi_{-}(H_{\mathcal{A}}) ,$ as we have in the classical semi-Fredholm theory on Hilbert and Banach spaces. This is due to the fact that these decompositions may not be unique for an $ F \in B^{a}(H_{\mathcal{A}}) $ whereas in the classical case one always considers the decomposition 
$$ H = (\ker F)^{\perp} \oplus \ker F‎‎\stackrel{F}{\longrightarrow} {\rm ran} F \oplus ({\rm ran} F)^{\perp} =H,$$ 
where $H$ is a Hilbert space and $ F \in B(H) .$ Key arguments in proving that $ \mathcal{M}\Phi(H_{\mathcal{A}})=\mathcal{M}\Phi_{+}(H_{\mathcal{A}}) \cap \mathcal{M}\Phi_{-}(H_{\mathcal{A}}) ,$ is the equivalence between $ \mathcal{M}\Phi_{+} $ property and the left invertibility in the $ C^{*}$-algebra $ B^{a}(H_{\mathcal{A}}) / K(H_{\mathcal{A}}) $ and the equivalence between $ \mathcal{M}\Phi_{-}$-property and the right invertibility in $ B^{a}(H_{\mathcal{A}}) / K(H_{\mathcal{A}}). $ \\
This is also the main argument in proving that $$\mathcal{M}\Phi_{+}(H_{\mathcal{A}}), \mathcal{M}\Phi_{-}(H_{\mathcal{A}}), 
\mathcal{M}\Phi_{+}(H_{\mathcal{A}}) \setminus \mathcal{M}\Phi(H_{\mathcal{A}}), \mathcal{M}\Phi_{-}(H_{\mathcal{A}}) \setminus \mathcal{M}\Phi(H_{\mathcal{A}})$$ 
are semigroups, as we have in the classical semi-Fredholm theory on Banach spaces. On the other hand, Lemma 2.16 and Lemma 2.17 are very important as well and they are also used in proving several other fundamental results in the paper, where we meet the challenge with non uniqueness of the decomposition. \\
In the third section in this paper we prove analogue results of \cite[ Lemma 1.4.1] {ZZRD} and \cite[ Lemma 1.4.2] {ZZRD}. More precisely, we give generalizations on Hilbert $C^{*}$-modules of the results from the classical semi-Fredholm theory on Banach spaces connected with the Schechter's characterization of $\Phi_{+}$ operators on Banach spaces given in \cite{S2} . \\
In the fourth section, we prove that $ \mathcal{M}\Phi_{+}(H_{\mathcal{A}}) \setminus \mathcal{M}\Phi(H_{\mathcal{A}}) $ and $ \mathcal{M}\Phi_{-}(H_{\mathcal{A}}) \setminus \mathcal{M}\Phi(H_{\mathcal{A}}) $ are open as an analogue of the well known result in the classical semi-Fredholm theory which states that the sets $\Phi_{+}(X) \setminus \Phi (X) $ and $\Phi_{+}(X) \setminus \Phi (X) $ are open, where $X$ is a Banach space, the result which was proved in \cite{S}. Here again Lemma 2.16 and Lemma 2.17 from the second chapter are one of the main arguments in the proof. Also, we prove an analogue version on $H_{\mathcal{A}} $ of \cite[ Corollary 1.6.10] {ZZRD} and \cite[ Lemma 1.6.11] {ZZRD}. \\
In the fifth section we give first a generalization on Hilbert $C^{*}$-modules of $\Phi_{+}^{-} $ and $\Phi_{-}^{+} $ operators on Hilbert spaces. A natural generalization on $ H_{\mathcal{A}} $ of the class $\Phi_{+}^{-} (X) $ as defined in \cite[ Definition 1.2.1] {ZZRD}, where $X$ is a Banach space would be the following:
Let $ F \in B^{a}(H_{\mathcal{A}})  .$ Then, $ F \in {\mathcal{M}\Phi_{+}^{-}}(H_{\mathcal{A}}) $ if there exists a decomposition
$$ H_{\mathcal{A}} = M_{1} \tilde \oplus {N_{1}}‎‎\stackrel{F}{\longrightarrow} M_{2} \tilde \oplus N_{2}= H_{\mathcal{A}} $$ 
with respect to which $F$ has the matrix
$ \left\lbrack
\begin{array}{ll}
F_{1} & 0 \\
0 & F_{4} \\
\end{array}
\right \rbrack
,$ where
$ F_{1} $ is an isomorphism, $N_{1}$ is finitely generated and $ N_{1} \preceq N_{2} ,$ that is $ N_{1} $ is isomorphic to a closed submodule of $ N_{2}  .$ If $ \mathcal{A}=\mathbb{C}  ,$ that is if $H_{\mathcal{A}} $ is an ordinary Hibert space, then this definition would concide with \cite[ Definition 1.2.1] {ZZRD} of the class of $ \Phi_{+}^{-} $ operators on Hilbert spaces. However, if $ X $ is a Hilbert or a Banach space, then $\Phi_{+}(X) \setminus \Phi(X) \subseteq \Phi_{+}^{-} (X) $ when we consider the \cite[ Definition 1.2.1] {ZZRD} of classes $ \Phi_{+}(X)   , \Phi(X) $ and $\Phi_{+}^{-} (X)  ,$ whereas if we consider this „generalized“ definition of the class $ \mathcal{M}\Phi_{+}^{-} (H_{\mathcal{A}})  ,$ it is not true in general that 
$$\mathcal{M}\Phi_{+}(H_{\mathcal{A}}) \setminus \mathcal{M}\Phi(H_{\mathcal{A}}) \subseteq \mathcal{M}\Phi_{+}^{-} (H_{\mathcal{A}}) .$$ 
This is due to the fact that given a finitely generated closed submodule $ N_{1} $ of $ H_{\mathcal{A}} $ and a countably, but not finitely generated closed submodule $ N_{2} $ of $ H_{\mathcal{A}}  ,$ it is not true in general that that $ N_{1} $ is isomorphic to a closed submodule of $ N_{2}   .$ Therefore, we define the class ${\mathcal{M}\Phi_{+}^{-}}^{\prime} (H_{\mathcal{A}}) $ to be the class of operators in $ B^{a}(H_{\mathcal{A}}) $ which satisfy the conditions of the suggested „generalized“ definition above of $ \mathcal{M}\Phi_{+}^{-} (H_{\mathcal{A}})   .$ Moreover, we define $ {\tilde {\mathcal{M}\Phi}}_{+}^{-} (H_{\mathcal{A}}) $ be the subclass of $ {\mathcal{M}\Phi_{+}^{-}}^{\prime} (H_{\mathcal{A}})$, where in the decomposition above we have in addition that $ N_{2}$ is finitely generated. 
Then we set
$$ \mathcal{M}\Phi_{+}^{-} (H_{\mathcal{A}}) = { {{\tilde {\mathcal{M}\Phi}}_{+}^{{-}^{\prime}}} (H_{\mathcal{A}})} \cup (\mathcal{M}\Phi_{+}(H_{\mathcal{A}}) \setminus \mathcal{M}\Phi(H_{\mathcal{A}})  ) .$$ 
In a similar way we define the classes $ {\mathcal{M}\Phi_{-}^{+}}^{\prime} (H_{\mathcal{A}}) , { {{\tilde {\mathcal{M}\Phi}}_{+}^{{-}^{\prime}}} (H_{\mathcal{A}})} $ and set
$$ \mathcal{M}\Phi_{-}^{+} (H_{\mathcal{A}}) = { {{\tilde {\mathcal{M}\Phi}}_{-}^{{+}^{\prime}}} (H_{\mathcal{A}})} \cup (\mathcal{M}\Phi_{-}(H_{\mathcal{A}}) \setminus \mathcal{M}\Phi(H_{\mathcal{A}})  ) .$$ 
We also then prove that $$ \tilde{\mathcal{M}\Phi}_{+}^{-}(H_{\mathcal{A}}) = {\mathcal{M}\Phi_{+}^{-}}^{\prime} (H_{\mathcal{A}}) \cap \mathcal{M}\Phi(H_{\mathcal{A}}), \tilde{\mathcal{M}\Phi}_{-}^{+}(H_{\mathcal{A}}) = {\mathcal{M}\Phi_{-}^{+}}^{\prime} (H_{\mathcal{A}}) \cap \mathcal{M}\Phi(H_{\mathcal{A}})  .$$ In addition we show that if $ F\in \tilde{\mathcal{M}\Phi}_{+}^{-}(H_{\mathcal{A}}) $ and 
$$H_{\mathcal{A}} = M_{1} \tilde \oplus {N_{1}}‎‎\stackrel{F}{\longrightarrow} M_{2} \tilde \oplus N_{2}= H_{\mathcal{A}} $$ 
is any decomposition with respect to which $ F $ has the matrix
$ \left\lbrack
\begin{array}{ll}
F_{1} & 0 \\
0 & F_{4} \\
\end{array}
\right \rbrack
,$
where $ F_{1} $ is an isomorphism and $ N_{1},N_{2} $ are finitely generated, then if $K(A)$ satisfies cancellation property, we have $ N_{1} \preceq N_{2}   .$ Similar conclusion yields for operators in the class $ \tilde{\mathcal{M}\Phi}_{-}^{+}(H_{\mathcal{A}}) $ only in this case $ N_{2} \preceq N_{1}.$ In addition, we show that the classes 
$$ \tilde{\mathcal{M}\Phi}_{+}^{-}(H_{\mathcal{A}}) , \tilde{\mathcal{M}\Phi_{-}^{+}}(H_{\mathcal{A}}), {\mathcal{M}\Phi_{+}^{-}}^{\prime} (H_{\mathcal{A}}) , {\mathcal{M}\Phi_{-}^{+}}^{\prime} (H_{\mathcal{A}}) $$ 
are open. In the rest of this section we work with these classes of operators and prove analogy or generalized versions of almost all results in \cite[ Section 1.9] {ZZRD}.\\
The generalized versions on $H_{\mathcal{A}} $ of the results from the classical semi-Fredholm theory on Banach and Hilbert spaces, which are presented here in this paper, demand different proofs from the proofs in the classical case, however the techniques used in these proofs are to a certain extent inspired by the techniques used in the proofs of some of the results in \cite{MF}. Moreover, the first section and especially the fourth section, where we introduce new, additional classes of operators as $ {\tilde{\mathcal{M}\Phi}_{+}^{-}}(H_{\mathcal{A}}) , {\tilde{\mathcal{M}\Phi}_{-}^{+}}(H_{\mathcal{A}}), {\mathcal{M}\Phi_{+}^{-}}^{\prime} (H_{\mathcal{A}}) $ and $ {\mathcal{M}\Phi_{-}^{+}}^{\prime} (H_{\mathcal{A}}) $ contain some new results and not just generalizations on $ H_{\mathcal{A}} $ of results from the classical semi-Fredholm theory on Banach spaces. 
\section{ Semi-{$\mathcal{A}$}-Fredholm operators on $H_{\mathcal{A}}$}
In this section we define semi-${\mathcal{A}}$-Fredholm operators on the standard module $H_{\mathcal{A}} $ and prove some of the main properties and results concerning these operators. Throughout this paper we let $\mathcal{A}$ be a unital $C^{*}$-algebra, $H_{\mathcal{A}}$ be a standard module over $\mathcal{A}$ and we let $B^{a}(H_{\mathcal{A}})$ denote the set of all bounded , adjointable operators on $H_{\mathcal{A}}.$ According to \cite[ Definition 1.4.1] {MT}, we say that a Hilbert $C^*$-module $M$ over $\mathcal{A}$ is finitely generated if there exists a finite set $ \lbrace x_{i} \rbrace \subseteq M $  such that $M $ equals the linear span (over $\textbf{C}$ and $\mathcal{A} $) of this set.
\begin{definition} 
Let $F \in B^{a}(H_{\mathcal{A}}).$ We say that $F $ is an upper semi-{$\mathcal{A}$}-Fredholm operator if there exists a decomposition $$H_{\mathcal{A}} = M_{1} \tilde \oplus {N_{1}}‎‎\stackrel{F}{\longrightarrow} M_{2} \tilde \oplus N_{2}= H_{\mathcal{A}} $$ with respect to which $F$ has the matrix\\

\begin{center}
	$\left\lbrack
	\begin{array}{ll}
	F_{1} & 0 \\
	0 & F_{4} \\
	\end{array}
	\right \rbrack,
	$
\end{center}
where $F_{1}$ is an isomorphism $M_{1},M_{2},N_{1},N_{2}$ are closed submodules of $H_{\mathcal{A}} $ and $N_{1}$ is finitely generated. Similarly, we say that $F$ is a lower semi-{$\mathcal{A}$}-Fredholm operator if all the above conditions hold except that in this case we assume that $N_{2}$ ( and not $N_{1}$ ) is finitely generated.
\end{definition}
Set
\begin{center}
$\mathcal{M}\Phi_{+}(H_{\mathcal{A}})=\lbrace F \in B^{a}(H_{\mathcal{A}}) \mid F $ is upper semi-{$\mathcal{A}$}-Fredholm $\rbrace ,$	
\end{center}
\begin{center}
$\mathcal{M}\Phi_{-}(H_{\mathcal{A}})=\lbrace F \in B^{a}(H_{\mathcal{A}}) \mid F $ is lower semi-{$\mathcal{A}$}-Fredholm $\rbrace ,$	
\end{center}
$\mathcal{M}\Phi(H_{\mathcal{A}})=\lbrace F \in B^{a}(H_{\mathcal{A}}) \mid F $ is $\mathcal{A}$-Fredholm operator on $H_{\mathcal{A}}\rbrace .$ Then obviously $\mathcal{M}\Phi (H_{\mathcal{A}}) \subseteq \mathcal{M}\Phi_{+}(H_{\mathcal{A}}) \cap \mathcal{M}\Phi_{-}(H_{\mathcal{A}})$ . We are going to show later in this section that actually "=" holds. 

Notice that if $M,N$ are two arbitrary Hilbert modules $C^{*}$-modules, the definition above could be generalized to the classes $\mathcal{M}\Phi_{+}(M,N)$ and $\mathcal{M}\Phi_{-}(M,N)$.\\
Recall that by \cite[ Definition 2.7.8]{MT}, originally given in \cite{MF}, when $F \in \mathcal{M}\Phi(H_{\mathcal{A}})     $ and 
$$ H_{\mathcal{A}} = M_{1} \tilde \oplus {N_{1}}‎‎\stackrel{F}{\longrightarrow} M_{2} \tilde \oplus N_{2}= H_{\mathcal{A}} $$
is an $ \mathcal{M}\Phi    $ decomposition for $  F   $, then the index of $F$ is definited by index $ F=[N_{1}]-[N_{2}] \in K(\mathcal{A})    $ where $[N_{1}]    $ and $ [N_{2}] $ denote  the isomorphism classes of $ N_{1}    $ and $ N_{2} $ respectively. „By \cite[ Definition 2.7.9]{MT}, the index is well defined and does not depend on the choice of $\mathcal{M}\Phi$ decomposition for $F$. As regards the $K$-group $K (\mathcal{A})$, it is worth mentioning that it is not true in general that [M]=[N] implies that $ M \cong N    $ for two finitely generated submodules $M, N$ of $ H_{\mathcal{A}}    $. If $K (\mathcal{A})$ satisfies the property that [N]=[M] implies that $ N \cong M     $ for any two finitely generated, closed submodules $M, N$ of $ H_{\mathcal{A}}     $, then $K (\mathcal{A})   $ is said to satisfy "the cancellation property", see \cite[Section 6.2] {W}.
\begin{theorem}
Let $F \in B^{a}(H_{\mathcal{A}}).$ The following statements are equivalent\\
	1)	$F \in \mathcal{M}\Phi_{+}(H_{\mathcal{A}}) $\\
	2)	There exists $D \in B^{a}(H_{\mathcal{A}}) $ such that $DF=I+K $ for some $K \in K(H_{\mathcal{A}})$
\end{theorem}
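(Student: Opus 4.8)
The plan is to prove the two implications separately. The implication $1)\Rightarrow 2)$ amounts to writing down a one-sided parametrix coming from the decomposition, while $2)\Rightarrow 1)$ — which upgrades left invertibility modulo $K(H_{\mathcal A})$ to an honest upper semi-$\mathcal A$-Fredholm decomposition — is the substantial part; the technique there is in the spirit of \cite{MF}.

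For $1)\Rightarrow 2)$ I would take a decomposition $H_{\mathcal A}=M_1\tilde\oplus N_1\xrightarrow{F}M_2\tilde\oplus N_2=H_{\mathcal A}$ as in Definition 2.1, with $F_1=F|_{M_1}\colon M_1\to M_2$ an isomorphism and $N_1$ finitely generated. Let $P$ be the projection onto $M_2$ along $N_2$ and $\iota\colon M_1\hookrightarrow H_{\mathcal A}$ the inclusion; then $F_1=PF\iota$ is adjointable, hence so is $F_1^{-1}$, and I put $D:=\iota F_1^{-1}P\in B^a(H_{\mathcal A})$. A block computation gives $DF=\mathrm{id}_{M_1}\oplus 0=I-Q$ with respect to $H_{\mathcal A}=M_1\tilde\oplus N_1$, where $Q$ is the projection onto $N_1$ along $M_1$. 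Since $\operatorname{ran}Q=N_1$ is closed and finitely generated it is orthogonally complemented, and the orthogonal projection onto a finitely generated submodule is a finite sum of rank-one operators $\theta_{z,z}(\cdot)=z\langle z,\cdot\rangle$, hence compact; consequently $Q=P_{N_1}Q$ is compact and $DF=I+K$ with $K=-Q\in K(H_{\mathcal A})$.

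For $2)\Rightarrow 1)$, write $H_{\mathcal A}=\bigoplus_{j\ge 1}\mathcal A$ and let $P_n$ be the orthogonal projection onto the first $n$ summands. First I would reduce to a finite-rank perturbation: since $K$ is compact, $\|P_nK-K\|\to 0$ (it suffices to check this on the generators $\theta_{x,y}$, using $P_n\to I$ strongly), so for large $n$ the operator $I+(K-P_nK)$ is invertible in $B^a(H_{\mathcal A})$ with inverse $R$, and with $D_1:=RD$ one gets $D_1F=I+K_0$, where $K_0:=RP_nK$ has range inside $V:=R(\mathcal A^n)$ — a closed, finitely generated submodule which is orthogonally complemented, being the range of the adjointable operator $RP_n$ (which has closed range since $R$ is an adjointable isomorphism). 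Next, for $x\in V^\perp$ one has $D_1Fx=x+K_0x$ with $x\in V^\perp$ and $K_0x\in V$ orthogonal, so $\|Fx\|\ge\|D_1\|^{-1}\|D_1Fx\|\ge\|D_1\|^{-1}\|x\|$; thus $F|_{V^\perp}$ is bounded below, its range $M_2:=F(V^\perp)$ is closed and orthogonally complemented, and $F|_{V^\perp}\colon V^\perp\to M_2$ is an isomorphism. Finally I would diagonalize: put $M_1:=V^\perp$, $N_2:=M_2^\perp$, $F_1:=F|_{M_1}$; with respect to $H_{\mathcal A}=M_1\oplus V\xrightarrow{F}M_2\oplus N_2=H_{\mathcal A}$ the matrix of $F$ has the form $\left[\begin{smallmatrix}F_1&B\\0&C\end{smallmatrix}\right]$ (the lower-left block vanishes because $F(M_1)=M_2$), where $B=P_{M_2}F|_V$ and $C=P_{N_2}F|_V$. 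Replacing $V$ by $N_1:=\{\,v-F_1^{-1}Bv:v\in V\,\}$ yields a decomposition $H_{\mathcal A}=M_1\tilde\oplus N_1$ with $N_1\cong V$ finitely generated (and with adjointable projections), and $F(v-F_1^{-1}Bv)=Fv-Bv=Cv\in N_2$, so $F$ becomes diagonal with isomorphic $(1,1)$-block $F_1$; hence $F\in\mathcal M\Phi_+(H_{\mathcal A})$.

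I expect the reduction step together with the complementability bookkeeping to be the main obstacle, and this is where the module setting genuinely departs from the Hilbert-space case: one cannot simply take $(\ker F)^\perp$ and argue that $F$ is bounded below there, since $\ker F$ need not be finitely generated nor even orthogonally complemented. Instead one must manufacture the finitely generated orthogonally complemented submodule $V$ that absorbs the compact perturbation, and then verify at each stage — for $V$, for $M_2$, and for $N_1$ — that the submodule produced is orthogonally complemented. The two facts I would lean on repeatedly are that an adjointable operator on a Hilbert $C^*$-module has closed range if and only if its range is orthogonally complemented, and that the orthogonal projection onto a finitely generated submodule is compact; adjointability of $D_1$ is automatic since $R$ is adjointable and invertible.
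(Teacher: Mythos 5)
Your implication $2)\Rightarrow 1)$ is correct, and it is in fact somewhat more self-contained than the paper's argument: the paper invokes \cite[Lemma 2.7.12]{MT} to get an $\mathcal{M}\Phi$-decomposition for $DF=I+K$ and then shows $F$ is bounded below on $M_{1}$ by a sequence argument, whereas you truncate $K$ by $P_{n}$, absorb the small remainder by a Neumann series, and produce directly a finitely generated, orthogonally complementable $V$ with $F$ bounded below on $V^{\perp}$; the final diagonalization is the same device the paper uses. That half I accept.

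The gap is in $1)\Rightarrow 2)$. Definition 2.1 provides only a topological decomposition $H_{\mathcal{A}}=M_{1}\tilde\oplus N_{1}\stackrel{F}{\longrightarrow}M_{2}\tilde\oplus N_{2}=H_{\mathcal{A}}$ in which the only submodule with any special property is $N_{1}$ (finitely generated); it does not say that $M_{1}$, $M_{2}$ or $N_{2}$ are orthogonally complementable, nor that the associated idempotents are adjointable. Your $D=\iota F_{1}^{-1}P$ is built from the inclusion $\iota$ of $M_{1}$ and the skew projection $P$ onto $M_{2}$ along $N_{2}$. The inclusion of a closed submodule is adjointable exactly when that submodule is orthogonally complementable, and a bounded idempotent with range $M_{2}$ and kernel $N_{2}$ is a mere bounded module map in general; so the assertion ``$F_{1}=PF\iota$ is adjointable'' and, more to the point, $D\in B^{a}(H_{\mathcal{A}})$, is unjustified. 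The same problem infects the compactness step: $K=-Q$ is only known to be a bounded module map, and the ideal property of $K(H_{\mathcal{A}})$ in $B^{a}(H_{\mathcal{A}})$ (or writing $Q=P_{N_{1}}Q$ as a sum of $\theta$-operators) is available only after $Q$ is known to be adjointable. The missing idea is precisely the orthogonalization the paper performs before defining $D$ (and which you yourself carried out in the other direction): since $N_{1}$ is finitely generated it is orthogonally complementable by \cite[Lemma 2.3.7]{MT}; by the proof of \cite[Theorem 2.7.6]{MT}, $F$ restricted to $N_{1}^{\perp}$ is an isomorphism onto $F(N_{1}^{\perp})$, which is orthogonally complementable by \cite[Theorem 2.3.3]{MT}; with respect to $H_{\mathcal{A}}=N_{1}^{\perp}\oplus N_{1}\stackrel{F}{\longrightarrow}F(N_{1}^{\perp})\oplus F(N_{1}^{\perp})^{\perp}=H_{\mathcal{A}}$ all projections and inclusions are orthogonal, hence adjointable, and only then is $D$ (the inverse of the corner padded by zero) an element of $B^{a}(H_{\mathcal{A}})$ with $DF=I+K$, $K$ compact because it factors through the compact projection onto $N_{1}$. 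With that repair your proof coincides with the paper's.
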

\begin{proof}
2)$\Rightarrow $ 1) If 2) holds, then $DF\in \mathcal{M}\Phi (H_{\mathcal{A}})$ by \cite[ Lemma 2.7.12] {MT}. Let $H_{\mathcal{A}} = M_{1} \tilde \oplus {N_{1}}‎‎\stackrel{DF}{\longrightarrow} M_{2} \tilde \oplus N_{2}= H_{\mathcal{A}} $ be a decomposition w.r.t which DF has the matrix
\begin{center}
	$	\left\lbrack
	\begin{array}{ll}
	(DF)_{1} & 0 \\
	0 & (DF)_{4} \\
	\end{array}
	\right \rbrack,
	$
\end{center}
where $(DF)_{1}$ is an isomorphism and $N_{1},N_{2}$ are finitely generated. 
We wish to show that $ F(M_{1}) $ is closed and we will do it by showing that $F_{\mid_{M_{1}}} $ is bounded below. Suppose that this is not the case. Then there exists a sequence $\lbrace x_{n} \rbrace \subseteq M_{1} $ s.t. $|| x_{n} ||=1 $ for all $n$ and $ F x_{n} \rightarrow 0 $ as $n\rightarrow  \infty.$ Since $ D $ is bounded, we must have that $DF x_{n} \rightarrow 0 $ as $n\rightarrow + \infty .$ But this would mean that $DF$ is not bounded below on $M_{1} $ as $ || x_{n} ||=1 $ for all $n.$ This is a contradiction since $DF_{\mid_{M_{1}}} $ is an isomorphism. Hence we must have that $F$ is bounded below on $ M_{1} $ which means that $ F(M_{1}) $ is closed.\\
Now, by \cite[ Theorem 2.7.6] {MT}, the result which was originally proved in \cite{T}, we may assume that $M_{1}$ is orthogonally complementable in $H_{\mathcal{A}}$. Hence $F_{{\mid}_{{M}_{1}}}$ is adjointable, so by \cite[ Theorem 2.3.3] {MT}, which was originally proved in \cite{M} , ${\rm ran} F_{{\mid}_{{M}_{1}}}$ is orthogonally complementable in $H_{\mathcal{A}}$.\\
Hence $H_{\mathcal{A}}=F(M_{1}) \oplus F(M_{1})^{\perp} .$ With respect to the decomposition 
$$ H_{\mathcal{A}}=M_{1} \tilde{\oplus} {N_{1}}‎‎\stackrel{F}{\longrightarrow} F(M_{1}) \oplus F(M_{1})^{\perp}=H_{\mathcal{A}} ,$$
$F$ has the matrix
$\left\lbrack
\begin{array}{ll}
F_{1} & F_{2} \\
0 & F_{4} \\
\end{array}
\right \rbrack,
$
where $F_{1}$ is an isomorphism. If we let 
\begin{center}
	$
	U=\left\lbrack
	\begin{array}{ll}
	1 &- {F_{1}}^{-1}{ F_{2}} \\
	0 & 1 \\
	\end{array}
	\right \rbrack 
	$	
\end{center}
with respect to the decomposition $$H_{\mathcal{A}} = M_{1} \tilde \oplus {N_{1}}‎‎\stackrel{U}{\longrightarrow} M_{1} \tilde \oplus N_{1}= H_{\mathcal{A}}  ,$$ then $ U $ is an isomorphism and with respect to the decomposition 
$$ H_{\mathcal{A}} =U( M_{1}) \tilde \oplus U({N_{1})}‎‎\stackrel{F}{\longrightarrow} F(M_{1}) \tilde \oplus F(M_{1})^{\perp}= H_{\mathcal{A}} $$ 
$F$ has the matrix
$\left\lbrack
\begin{array}{ll}
F_{1} & 0 \\
0 & \tilde{F}_{4} \\
\end{array}
\right \rbrack .
$
Since $ N_{1} $ is finitely generated, $ U(N_{1}) $ is finitely generated also, hence $ F \in \mathcal{M}\Phi_{+}(H_{\mathcal{A}})  .$\\
$1)\Rightarrow 2)$ \\
Let 
$$H_{\mathcal{A}} = M_{1} \tilde \oplus {N_{1}}‎‎\stackrel{F}{\longrightarrow} M_{2} \tilde \oplus N_{2}= H_{\mathcal{A}} $$ 
be a decomposition with respect to which $F$ has the matrix
$\left\lbrack
\begin{array}{ll}
F_{1} & 0 \\
0 & F_{4} \\
\end{array}
\right \rbrack,
$ 
where $ F_{1} $ is an isomorphism and $N_{1} $ is finitely generated. Since $ N_{1} $ is finitely generated, it is orthogonally complementable in $H_{\mathcal{A}} $ by \cite[ Lemma 2.3.7] {MT} which was originally proved in \cite{M}. Then, by the proof of \cite[ Theorem 2.7.6] {MT} , we can deduce that $F_{{\mid}_{N_{{1}}^{\bot}}} $ is an isomorphism onto $F{{({N_{1}}^{\bot})}} $.
Now, $F{{({N_{1}}^{\bot})}}={\rm ran} FP_{{{N_{1}}^{\bot}}}$, where $ P_{{{N_{1}}^{\bot}}} $ denotes the orthogonal projection onto ${N_{1}}^{\bot} $. Since $FP_{{{N_{1}}^{\bot}}} \in B^{a}(H_{\mathcal{A}}) $ and $F{{({N_{1}}^{\bot})}} $ is closed being isomorphic to ${N_{1}}^{\bot} ,$ by \cite[ Theorem 2.3.3] {MT}, it follows that $F{{({N_{1}}^{\bot})}} $ is orthogonally complementable. With respect to the decomposition 
$$H_{\mathcal{A}} = {N_{1}}^{\bot} \oplus {N_{1}}‎‎\stackrel{F}{\longrightarrow} F({N_{1}}^{\bot} ) \oplus F({N_{1}}^{\bot} )^{\bot}=H_{\mathcal{A}} $$
$ F $ has the matrix
$	\left\lbrack
\begin{array}{ll}
{\tilde F_{1}} & {\tilde F_{2}} \\
0 & {\tilde F_{4}} \\
\end{array}
\right \rbrack,
$ 
where $ {\tilde F_{1}} $ is an isomorphism. Clearly $\tilde F_{1}, \tilde F_{2}$ and $ \tilde F_{4} $ are then adjointable.\\
Let $D$ be the operator which has the matrix
$	\left\lbrack
\begin{array}{ll}
{\tilde F_{1}}^{-1} & 0 \\
0 & 0 \\
\end{array}
\right \rbrack
$ 
with respect to the decomposition
$$H_{\mathcal{A}} =F({N_{1}}^{\bot} ) \oplus {F({N_{1}}^{\bot} )^{\bot}}‎‎\stackrel{D}{\longrightarrow} {N_{1}}^{\bot} \oplus {N_{1}} =H_{\mathcal{A}} $$
Then $D \in B^{a}(H_{\mathcal{A}}) $ and
$
DF=\left\lbrack
\begin{array}{ll}
1 & {\tilde F_{1}}^{-1}{\tilde F_{2}} \\
0 & 0 \\
\end{array}
\right \rbrack 
$
with respect to the decomposition 
$$H_{\mathcal{A}} ={N_{1}}^{\bot} \oplus {N_{1}}‎‎\stackrel{DF}{\longrightarrow} {N_{1}}^{\bot} \oplus {N_{1}} =H_{\mathcal{A}} . $$
Let 
$K=\left\lbrack
\begin{array}{ll}
0 & {\tilde F_{1}}^{-1}\tilde{F_{2}} \\
0 & -1 \\
\end{array}
\right \rbrack 
$ 
with respect to the same decomposition. Since $ N_{1} $ is finitely generated, we have $K \in K(H_{\mathcal{A}}) $. Moreover, $DF=I+K.$
\end{proof} 
\begin{theorem}
Let $ D \in B^{a}(H_{\mathcal{A}}) .$ Then the following statements are equivalent:\\
	1)	$ D \in \mathcal{M}\Phi_{-}(H_{\mathcal{A}}) $\\
	2)	There exist $F \in B^{a}(H_{\mathcal{A}}), K \in K(H_{\mathcal{A}}) $ s.t. $DF=I+K $
\end{theorem}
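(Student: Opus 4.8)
The plan is to prove the two implications directly, in close parallel with the proof of Theorem~2.2, but with the roles of domain and range of the operator (and, correspondingly, of upper- and lower-triangular matrix representations) interchanged. Before that, I note a shortcut: once one knows that $D\in\mathcal{M}\Phi_{-}(H_{\mathcal{A}})\Leftrightarrow D^{*}\in\mathcal{M}\Phi_{+}(H_{\mathcal{A}})$, the statement is immediate from Theorem~2.2 applied to $D^{*}$, since $DF=I+K$ is equivalent to $F^{*}D^{*}=I+K^{*}$ with $K^{*}\in K(H_{\mathcal{A}})$. So the real content is either that adjoint equivalence or, directly, the two implications sketched below; I will present the direct route.

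For the implication $2)\Rightarrow 1)$ I would argue as follows. Suppose $DF=I+K$ with $K\in K(H_{\mathcal{A}})$. Then $DF\in\mathcal{M}\Phi(H_{\mathcal{A}})$ by \cite[Lemma 2.7.12]{MT}; choose an $\mathcal{M}\Phi$-decomposition $H_{\mathcal{A}}=M_{1}\tilde\oplus N_{1}\stackrel{DF}{\longrightarrow}M_{2}\tilde\oplus N_{2}=H_{\mathcal{A}}$ with $(DF)_{1}:M_{1}\to M_{2}$ an isomorphism and $N_{1},N_{2}$ finitely generated. Exactly as in the proof of Theorem~2.2, $F|_{M_{1}}$ is bounded below (from $\|(DF)x\|\leq\|D\|\,\|Fx\|$ for $x\in M_{1}$), so $F(M_{1})$ is a closed submodule; put $P_{1}:=F(M_{1})$ and $Q_{1}:=D^{-1}(N_{2})$, the latter closed as the preimage of a closed submodule. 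I would then check: (i) $D|_{P_{1}}:P_{1}\to M_{2}$ is a bounded bijection --- injective because $(DF)|_{M_{1}}$ is, surjective because $D(P_{1})=DF(M_{1})=M_{2}$ --- hence an isomorphism; (ii) $H_{\mathcal{A}}=P_{1}\tilde\oplus Q_{1}$, since $P_{1}\cap Q_{1}=\{0\}$ (because $M_{2}\cap N_{2}=\{0\}$ and $D|_{P_{1}}$ is injective) and $P_{1}+Q_{1}=H_{\mathcal{A}}$ (for $x\in H_{\mathcal{A}}$ write $Dx=u+v$ with $u\in M_{2}=D(P_{1})$, $v\in N_{2}$, pick $p\in P_{1}$ with $Dp=u$, and note $x-p\in Q_{1}$), the sum being topological by the closed graph theorem. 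With respect to $H_{\mathcal{A}}=P_{1}\tilde\oplus Q_{1}\stackrel{D}{\longrightarrow}M_{2}\tilde\oplus N_{2}=H_{\mathcal{A}}$, the operator $D$ is then block diagonal, its upper-left block the isomorphism $D|_{P_{1}}$, and $N_{2}$ is finitely generated, so $D\in\mathcal{M}\Phi_{-}(H_{\mathcal{A}})$.

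For the implication $1)\Rightarrow 2)$ I would proceed thus. Choose an $\mathcal{M}\Phi_{-}$-decomposition $H_{\mathcal{A}}=M_{1}\tilde\oplus N_{1}\stackrel{D}{\longrightarrow}M_{2}\tilde\oplus N_{2}=H_{\mathcal{A}}$ with matrix $\left\lbrack\begin{array}{ll}D_{1}&0\\0&D_{4}\end{array}\right\rbrack$, $D_{1}$ an isomorphism and $N_{2}$ finitely generated. Since $N_{2}$ is finitely generated it is orthogonally complementable by \cite[Lemma 2.3.7]{MT}, so $P_{N_{2}^{\perp}}\in B^{a}(H_{\mathcal{A}})$ and $A:=P_{N_{2}^{\perp}}D\in B^{a}(H_{\mathcal{A}})$. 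A short computation from the block form of $D$ (using $D_{4}(N_{1})\subseteq N_{2}$ and $P_{N_{2}^{\perp}}(M_{2})=N_{2}^{\perp}$) gives $\ker A=N_{1}$ and ${\rm ran}\,A=N_{2}^{\perp}$, a closed submodule. As $A$ is adjointable with closed range, $N_{1}=\ker A$ is orthogonally complementable (with $N_{1}^{\perp}={\rm ran}\,A^{*}$) and $A$ restricts to an isomorphism $A_{1}:N_{1}^{\perp}\to N_{2}^{\perp}$; hence with respect to $H_{\mathcal{A}}=N_{1}^{\perp}\oplus N_{1}\stackrel{D}{\longrightarrow}N_{2}^{\perp}\oplus N_{2}=H_{\mathcal{A}}$ the operator $D$ has the adjointable matrix $\left\lbrack\begin{array}{ll}A_{1}&0\\B_{1}&B_{4}\end{array}\right\rbrack$, the $(1,2)$-entry vanishing because $D(N_{1})\subseteq N_{2}$. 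Let $F\in B^{a}(H_{\mathcal{A}})$ have matrix $\left\lbrack\begin{array}{ll}A_{1}^{-1}&0\\0&0\end{array}\right\rbrack$ with respect to $H_{\mathcal{A}}=N_{2}^{\perp}\oplus N_{2}\stackrel{F}{\longrightarrow}N_{1}^{\perp}\oplus N_{1}=H_{\mathcal{A}}$; then $DF$ has matrix $\left\lbrack\begin{array}{ll}1&0\\B_{1}A_{1}^{-1}&0\end{array}\right\rbrack$, so $K:=DF-I$ has matrix $\left\lbrack\begin{array}{ll}0&0\\B_{1}A_{1}^{-1}&-1\end{array}\right\rbrack$, with range in the finitely generated submodule $N_{2}$ and adjointable, whence $K\in K(H_{\mathcal{A}})$ and $DF=I+K$.

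The step I expect to be the main obstacle is the one inside $1)\Rightarrow 2)$: a given $\mathcal{M}\Phi_{-}$-decomposition need not have $M_{1},M_{2}$ orthogonally complementable, so neither the blocks of $D$ nor those of a candidate one-sided inverse are a priori adjointable. The point that unlocks this is that $N_{1}$ is nevertheless forced to be orthogonally complementable, being the kernel of the adjointable closed-range operator $P_{N_{2}^{\perp}}D$, which lets one pass to the orthogonal refinement $N_{1}^{\perp}\oplus N_{1}\to N_{2}^{\perp}\oplus N_{2}$ over which all blocks are adjointable. No such difficulty occurs in $2)\Rightarrow 1)$, where one need only produce one decomposition of $D$, not an adjointable matrix representation; and the adjoint route mentioned at the outset avoids it altogether by shifting the work onto Theorem~2.2 via $D^{*}$.
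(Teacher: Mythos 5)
Your proof is correct and follows essentially the same route as the paper's: for $2)\Rightarrow 1)$ you compose with an $\mathcal{M}\Phi$-decomposition of $DF=I+K$, show $F(M_{1})$ is closed, and use that $N_{2}$ is finitely generated --- your complement $D^{-1}(N_{2})$ is exactly the submodule the paper arrives at via $F(M_{1})^{\perp}$ followed by the column operation $\tilde U$, so this is only a cosmetic streamlining that avoids invoking orthogonal complementability of $F(M_{1})$ --- and for $1)\Rightarrow 2)$ your construction of $F$ and $K$ from the orthogonal decompositions $N_{1}^{\perp}\oplus N_{1}\to N_{2}^{\perp}\oplus N_{2}$, with $A=P_{N_{2}^{\perp}}D$, is the paper's argument nearly verbatim. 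One small caution: the ``shortcut'' via $D^{*}\in\mathcal{M}\Phi_{+}$ would be circular at this point in the paper, since the equivalence $D\in\mathcal{M}\Phi_{-}\Leftrightarrow D^{*}\in\mathcal{M}\Phi_{+}$ (Corollary 2.11) is itself deduced from the proofs of Theorems 2.2 and 2.3, but as you present the direct argument this does not affect your proof.
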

\begin{proof}
$2) \Rightarrow 1) $\\
Let $$ H_{\mathcal{A}} = M_{1} \tilde \oplus {N_{1}}^{^{‎‎\stackrel{I+K}{\longrightarrow}}} M_{2} \tilde \oplus N_{2}= H_{\mathcal{A}} . $$ be an $\mathcal{M}\Phi$ decomposition for $ I+K .$
As in the proof of Theorem 2.2, we deduce that $ F(M_{1}) $ is closed and orthogonally complementable in $H_{\mathcal{A}} .$\\
With respect to the decomposition 
$$ H_{\mathcal{A}}=F(M_{1}) \tilde{\oplus} {F({M_{1}})^{\perp} }‎‎\stackrel{D}{\longrightarrow} M_{2} \oplus N_{2}=H_{\mathcal{A}} ,$$
$D$ has the matrix 
$\left\lbrack
\begin{array}{ll}
D_{1} & D_{2} \\
0 & D_{4} \\
\end{array}
\right \rbrack,
$
where $ D_{1} $ is an isomorphism, as in the proof of Theorem 2.2, part $2) \Rightarrow 1)  ,$ we deduce thet $ D $ has the matrix 
$\left\lbrack
\begin{array}{ll}
D_{1} & 0 \\
0 & \tilde D_{4} \\
\end{array}
\right \rbrack
$
with respect to the decomposition
$$ H_{\mathcal{A}}=\tilde U(F(M_{1})) \tilde{\oplus} \tilde{U}({F({M_{1}})^{\perp} )}‎‎\stackrel{D}{\longrightarrow} M_{2} \oplus N_{2}=H_{\mathcal{A}},$$
where $ \tilde{U} $ is an isomorpism. Since $ N_{2} $ is finitely generated, it follows that $ D \in \mathcal{M}\Phi_{-}(H_{\mathcal{A}}) .$\\
$ 1) \Rightarrow 2) $\\
Let 
$$ H_{\mathcal{A}} = M_{1}^{\prime} \tilde \oplus {N_{1}^{\prime}}‎‎\stackrel{D^{\prime}}{\longrightarrow} M_{2}^{\prime} \tilde \oplus N_{2}^{\prime}= H_{\mathcal{A}} $$ 
be an $\mathcal{M}\Phi_{-} $ decomposition for $ D $ ( so that $ N_{2}^{\prime} $ is finitely generated ). Since $ N_{2}^{\prime} $ is finitely generated, it is orthogonally complementable by \cite[ Lemma 2.3.7] {MT}.
Now, since
$$ H_{\mathcal{A}} = M_{2}^{\prime} \tilde \oplus {N_{2}^{\prime}}= {N_{2}^{\prime}}^{\perp} \tilde \oplus N_{2}^{\prime},$$ 
we have that $P_{{{{N_{2}^{\prime}}^{\bot}}}_{{\mid}_{M_{2}^{\prime}}}} $ is an isomorphism from $ M_{2}^{\prime} $ onto $ {N_{2}^{\prime}}^{\perp}$, where $P_{{N_{2}^{\prime}}^{\perp}} $ denotes the orthogonal projection onto ${N_{2}^{\prime}}^{\perp} .$ Since $D$ has the matrix
$\left\lbrack
\begin{array}{ll}
D_{1} & 0 \\
0 & D_{4} \\
\end{array}
\right \rbrack
$ 
with respect to the decomposition 
$$ H_{\mathcal{A}} = M_{1}^{\prime} \tilde \oplus {N_{1}^{\prime}}‎‎\stackrel{D}{\longrightarrow} M_{2}^{\prime} \tilde \oplus N_{2}^{\prime}= H_{\mathcal{A}},$$
where $D_{1} $ is an isomorphism, it follows that $D^{-1}(N_{2}^{\prime})=N_{1}^{\prime} $, $\ker P_{{N_{2}^{\prime}}^{\perp}} D=N_{1}^{\prime} $ and ${\rm ran} P_{{N_{2}^{\prime}}^{\perp}} D= P_{{N_{2}^{\prime}}^{\perp}} (M_{2}^{\prime}) = {N_{2}^{\prime}}^{\perp}$ which is closed. By \cite[ Theorem 2.3.3] {MT}, $ \ker P_{{N_{2}^{\prime}}^{\perp}} D=N_{1}^{\prime} $ is orthogonally complementable, so $H_{\mathcal{A}} = {N_{1}^{\prime}}^{\perp} \oplus {N_{1}^{\prime}}.$ Hence $\sqcap_{{M_{1}^{\prime}}_{{\mid}_{{N_{1}^{\prime}}^{\perp} }}} $ is an isomorphism from ${N_{1}^{\prime}}^{\perp} $ onto $ M_{1}^{\prime} $, where $\sqcap_{M_{1}^{\prime}} $ denotes the projection onto $M_{1}^{\prime} $ along $ N_{1}^{\prime}$. Therefore, $ P_{{N_{2}^{\prime}}^{\perp}} D \sqcap_{{M_{1}^{\prime}}_{{\mid}_{{N_{1}^{\prime}}^{\perp} }}} $ is an isomorphism from ${N_{1}^{\prime}}^{\perp} $ onto ${N_{2}^{\prime}}^{\perp} .$ But since $D^{-1}(N_{2}^{\prime}) = N_{1}^{\prime} $ and $H_{\mathcal{A}} = {M_{1}^{\prime}} \tilde{\oplus} N_{1}^{\prime} ,$ it follows that 
$$P_{{N_{2}^{\prime}}^{\perp}} D_{\mid_{{{N_{1}^{\prime}}^{\perp}}}}=P_{{N_{2}^{\prime}}^{\perp}} D \sqcap_{{M_{1}^{\prime}}_{{\mid}_{{N_{1}^{\prime}}^{\perp} }}} .$$ 
Hence $P_{{N_{2}^{\prime}}^{\perp}} D_{\mid_{{{N_{1}^{\prime}}^{\perp}}}} $ 
is an isomorphism from ${N_{1}^{\prime}}^{\perp} $ onto ${N_{2}^{\prime}}^{\perp} ,$ being a composition of isomorphisms, so with respect to the decomposition
$$ H_{\mathcal{A}} = {N_{1}^{\prime}}^{\perp} \oplus {N_{1}^{\prime}}‎‎\stackrel{D}{\longrightarrow} {N_{2}^{\prime}}^{\perp} \oplus {N_{2}^{\prime}} = H_{\mathcal{A}} , $$
$D$ has the matrix
$\left\lbrack
\begin{array}{ll}
\tilde{D_{1}} & 0 \\
\tilde{D_{3}} &\tilde{D_{4}} \\
\end{array}
\right \rbrack,
$ 
where $ \tilde{D_{1}} $ is an isomorphism. \\
Let 
$F=\left\lbrack
\begin{array}{ll}
(\tilde{D_{1}})^{-1} & 0 \\
0 & 0 \\
\end{array}
\right \rbrack
$ 
with respect to the decomposition
$$ H_{\mathcal{A}} = {N_{2}^{\prime}}^{\perp} \oplus {N_{2}^{\prime}}‎‎\stackrel{F}{\longrightarrow} {N_{1}^{\prime}}^{\perp} \oplus {N_{1}^{\prime}} = H_{\mathcal{A}} .$$ 
Then $F \in B^{a}(H_{\mathcal{A}}) $ and
$DF=\left\lbrack
\begin{array}{ll}
1 & 0\\
\tilde D_{3} \tilde D_{1}^{-1} & 0 \\
\end{array}
\right \rbrack
$ 
with respect to the decomposition
$$ H_{\mathcal{A}} = {N_{2}^{\prime}}^{\perp} \oplus {N_{2}^{\prime}}‎‎\stackrel{DF}{\longrightarrow} {N_{2}^{\prime}}^{\perp} \oplus {N_{2}^{\prime}} = H_{\mathcal{A}} .$$ 
Since $ N_{2}^{\prime} $ is finitely generated, it follows that if we let the operator\\
$K=\left\lbrack
\begin{array}{ll}
0 & 0\\
D_{3}D_{1}^{-1} & -1 \\
\end{array}
\right \rbrack
$ 
w.r.t the decomposition above , the $K \in (H_{\mathcal{A}}) $. 
Moreover $DF=I+K.$
\end{proof}
Recall that $B^{a}(H_{\mathcal{A}}) $ is a $C^{*}$-algebra and $K(H_{\mathcal{A}}) $ is a closed two sided ideal in $B^{a}(H_{\mathcal{A}}) .$ Hence $B^{a}(H_{\mathcal{A}}) / K(H_{\mathcal{A}}) $ is also $C^{*}$-algebra, equipped with the quotient norm. We will call this algebra the "Calkin" algebra.
\begin{corollary}
$ \mathcal{M}\Phi (H_{\mathcal{A}})=\mathcal{M}\Phi_{+}(H_{\mathcal{A}}) \cap \mathcal{M}\Phi_{-}(H_{\mathcal{A}})	$
\end{corollary}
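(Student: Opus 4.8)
The plan is to prove the inclusion $\mathcal{M}\Phi_{+}(H_{\mathcal{A}})\cap\mathcal{M}\Phi_{-}(H_{\mathcal{A}})\subseteq\mathcal{M}\Phi(H_{\mathcal{A}})$, the reverse inclusion having already been recorded above. Write $\pi\colon B^{a}(H_{\mathcal{A}})\to B^{a}(H_{\mathcal{A}})/K(H_{\mathcal{A}})$ for the quotient map onto the Calkin algebra. The whole point is that Theorem 2.2 identifies $\mathcal{M}\Phi_{+}(H_{\mathcal{A}})$ with the set of operators that are left invertible modulo $K(H_{\mathcal{A}})$, while Theorem 2.3 identifies $\mathcal{M}\Phi_{-}(H_{\mathcal{A}})$ with the set of operators that are right invertible modulo $K(H_{\mathcal{A}})$; hence an operator lying in both classes has an image in the Calkin algebra that is both left and right invertible, so it is two-sidedly invertible there, and one is then reduced to the Atkinson-type theorem to come back to $\mathcal{M}\Phi(H_{\mathcal{A}})$.

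In detail, I would take $F\in\mathcal{M}\Phi_{+}(H_{\mathcal{A}})\cap\mathcal{M}\Phi_{-}(H_{\mathcal{A}})$. Applying Theorem 2.2 to $F$ yields $D\in B^{a}(H_{\mathcal{A}})$ and $K\in K(H_{\mathcal{A}})$ with $DF=I+K$, i.e. $\pi(D)\pi(F)=1$. Applying Theorem 2.3 with $F$ playing the role of the operator $D$ in that statement yields $G\in B^{a}(H_{\mathcal{A}})$ and $K'\in K(H_{\mathcal{A}})$ with $FG=I+K'$, i.e. $\pi(F)\pi(G)=1$. Then, in the unital algebra $B^{a}(H_{\mathcal{A}})/K(H_{\mathcal{A}})$, the routine computation $\pi(D)=\pi(D)\bigl(\pi(F)\pi(G)\bigr)=\bigl(\pi(D)\pi(F)\bigr)\pi(G)=\pi(G)$ shows that the left and right inverses of $\pi(F)$ coincide, so $\pi(F)$ is invertible in the Calkin algebra; equivalently, $DF-I$ and $FD-I$ both lie in $K(H_{\mathcal{A}})$. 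Finally, by the generalization of Atkinson's theorem to Hilbert $C^{*}$-modules proved by Mishchenko and Fomenko (\cite{MF}; see also \cite{MT}), an operator in $B^{a}(H_{\mathcal{A}})$ whose image in $B^{a}(H_{\mathcal{A}})/K(H_{\mathcal{A}})$ is invertible is $\mathcal{A}$-Fredholm, so $F\in\mathcal{M}\Phi(H_{\mathcal{A}})$, which completes the proof.

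The point that requires care, and the reason one does not argue directly, is the following: given $F$ in the intersection we have one decomposition witnessing $F\in\mathcal{M}\Phi_{+}(H_{\mathcal{A}})$, in which the complementary summand of the domain is finitely generated, and a second, possibly quite different, decomposition witnessing $F\in\mathcal{M}\Phi_{-}(H_{\mathcal{A}})$, in which the complementary summand of the range is finitely generated; these need not be compatible and cannot in general be glued into a single $\mathcal{M}\Phi$ decomposition, which is precisely the non-uniqueness phenomenon emphasized in the Introduction. Routing the argument through Theorems 2.2 and 2.3 and the Calkin algebra is exactly what circumvents this. If one wished to keep the proof self-contained and not invoke the $C^{*}$-module Atkinson theorem, the alternative would be to reprove the implication ``$\pi(F)$ invertible $\Rightarrow F\in\mathcal{M}\Phi(H_{\mathcal{A}})$'' here, manufacturing an $\mathcal{M}\Phi$ decomposition for $F$ out of the relations $DF=I+K$ and $FD-I\in K(H_{\mathcal{A}})$ by the same complementability facts (\cite[Theorem 2.3.3]{MT}, \cite[Lemma 2.3.7]{MT}) and block-triangular reductions used in the proofs of Theorems 2.2 and 2.3; I expect that reproving step to be the only genuine work, the manipulation inside the Calkin algebra being trivial.
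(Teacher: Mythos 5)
Your argument is correct and is essentially the paper's own proof: both pass through Theorem 2.2 and Theorem 2.3 to translate $\mathcal{M}\Phi_{+}$ and $\mathcal{M}\Phi_{-}$ into left and right invertibility in the Calkin algebra $B^{a}(H_{\mathcal{A}})/K(H_{\mathcal{A}})$, use that a left- and right-invertible element of a unital algebra is invertible, and then invoke the Mishchenko--Fomenko (Atkinson-type) characterization of $\mathcal{M}\Phi(H_{\mathcal{A}})$ as the preimage of the invertibles to conclude. Your closing remark about why one cannot glue the two decompositions directly matches the motivation given in the paper's introduction, so no changes are needed.
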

\begin{proof}
It suffices to show $ "\supseteq" .$ By Theorem 2.2, $\mathcal{M}\Phi_{+}(H_{\mathcal{A}}) $ consists of all elements that are left invertible in the "Calkin" algebra, whereas $\mathcal{M}\Phi_{-}(H_{\mathcal{A}}) $ consists of all elements that are right invertible in the "Calkin" algebra by Theorem 2.3. Now by \cite[ Theorem 2.7.14] {MT} and also by the proof of \cite[ Lemma 2.7.15] {MT}, we have that $ \mathcal{M}\Phi (H_{\mathcal{A}}) $ consists of all elements that are invertible in the "Calkin" algebra. The corollary follows. 
\end{proof}
\begin{corollary}
$ \mathcal{M}\Phi_{+}(H_{\mathcal{A}}) $ and $ \mathcal{M}\Phi_{-}(H_{\mathcal{A}}) $ are semigroups under multiplication. 
\end{corollary}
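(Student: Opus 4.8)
The plan is to reduce the statement to the elementary algebraic fact that, in a unital algebra, the product of two left invertible elements is again left invertible, and likewise for right invertible elements, and then to transport this fact through the quotient map onto the ``Calkin'' algebra $B^{a}(H_{\mathcal{A}})/K(H_{\mathcal{A}})$.

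First I would record the precise reformulation supplied by Theorems 2.2 and 2.3. Let $\pi\colon B^{a}(H_{\mathcal{A}})\to B^{a}(H_{\mathcal{A}})/K(H_{\mathcal{A}})$ denote the quotient map. Theorem 2.2 says that $F\in\mathcal{M}\Phi_{+}(H_{\mathcal{A}})$ if and only if $\pi(F)$ is left invertible in the Calkin algebra: the implication $1)\Rightarrow 2)$ produces $D$ with $DF=I+K$, i.e.\ $\pi(D)\pi(F)=1$, while conversely, if $\pi(F)$ has a left inverse $a$, choosing any $D\in B^{a}(H_{\mathcal{A}})$ with $\pi(D)=a$ gives $DF-I\in K(H_{\mathcal{A}})$, so that Theorem 2.2 places $F$ in $\mathcal{M}\Phi_{+}(H_{\mathcal{A}})$. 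In exactly the same way, Theorem 2.3 says that $D\in\mathcal{M}\Phi_{-}(H_{\mathcal{A}})$ if and only if $\pi(D)$ is right invertible in the Calkin algebra. This is precisely the identification already exploited in the proof of Corollary 2.4.

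Next comes the one-line algebra. If $x,y$ are left invertible in a unital algebra, say $ax=1$ and $by=1$, then $(ba)(xy)=b(ax)y=by=1$, so $xy$ is left invertible; dually, if $xa=1$ and $yb=1$, then $(xy)(ba)=x(yb)a=xa=1$, so the product of right invertible elements is right invertible. Given $F,G\in\mathcal{M}\Phi_{+}(H_{\mathcal{A}})$, the elements $\pi(F)$ and $\pi(G)$ are left invertible, hence so is $\pi(FG)=\pi(F)\pi(G)$, and therefore $FG\in\mathcal{M}\Phi_{+}(H_{\mathcal{A}})$ by the reformulation above; the argument for $\mathcal{M}\Phi_{-}(H_{\mathcal{A}})$ is identical with ``right'' in place of ``left''. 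Thus both classes are closed under multiplication, which is exactly what ``semigroup'' asks for.

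I do not expect a genuine obstacle here; the only point requiring a little care is the reverse half of the reformulation in the second paragraph --- that one-sided invertibility of $\pi(F)$ really forces $F$ into the corresponding class --- which is handled by lifting the one-sided inverse along the surjection $\pi$ and re-applying Theorem 2.2 or Theorem 2.3. Everything else is the displayed computation $(ba)(xy)=by=1$.
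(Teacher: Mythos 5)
Your proposal is correct and follows exactly the paper's route: the paper's own proof simply invokes Theorems 2.2 and 2.3 to identify $\mathcal{M}\Phi_{+}(H_{\mathcal{A}})$ and $\mathcal{M}\Phi_{-}(H_{\mathcal{A}})$ with the left and right invertible elements modulo $K(H_{\mathcal{A}})$ and uses that these are multiplicative classes in the "Calkin" algebra. Your extra detail (lifting the one-sided inverse through the quotient map and the computation $(ba)(xy)=1$) just makes explicit what the paper leaves implicit, so there is nothing to fix.
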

\begin{proof}
Follows directly from Theorem 2.2 and Theorem 2.3, as $ \mathcal{M}\Phi_{+}(H_{\mathcal{A}}) $ consists of all elements that are left invertible  in the "Calkin" algebra whereas $ \mathcal{M}\Phi_{+}(H_{\mathcal{A}}) $ consists of all elements that are right invertible in the "Calkin" algebra. 
\end{proof}
\begin{corollary}
Let $ F,D \in B^{a}(H_{\mathcal{A}}) .$ If $DF \in \mathcal{M}\Phi_{+}(H_{\mathcal{A}}) ,$ then $F \in \mathcal{M}\Phi_{+}(H_{\mathcal{A}}) .$ If $ DF \in \mathcal{M}\Phi_{-}(H_{\mathcal{A}}) ,$ then $D \in \mathcal{M}\Phi_{-}(H_{\mathcal{A}}) .$\\
\end{corollary}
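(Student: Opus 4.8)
The plan is to read both implications straight off the one-sided invertibility characterizations established in Theorem 2.2 and Theorem 2.3, using nothing more than associativity of operator composition and the fact that $B^a(H_{\mathcal{A}})$ is an algebra.

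For the first statement, suppose $DF \in \mathcal{M}\Phi_{+}(H_{\mathcal{A}})$. By the implication $1)\Rightarrow 2)$ of Theorem 2.2 there is a $D' \in B^a(H_{\mathcal{A}})$ and a $K \in K(H_{\mathcal{A}})$ with $D'(DF) = I + K$. Since composition is associative, $D'(DF) = (D'D)F$, and $D'D \in B^a(H_{\mathcal{A}})$ because $B^a(H_{\mathcal{A}})$ is closed under multiplication. Thus $F$ satisfies condition $2)$ of Theorem 2.2 with the operator $D'D$ in place of $D$, and the implication $2)\Rightarrow 1)$ of that theorem gives $F \in \mathcal{M}\Phi_{+}(H_{\mathcal{A}})$.

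For the second statement, suppose $DF \in \mathcal{M}\Phi_{-}(H_{\mathcal{A}})$. By the implication $1)\Rightarrow 2)$ of Theorem 2.3 there exist $F' \in B^a(H_{\mathcal{A}})$ and $K \in K(H_{\mathcal{A}})$ with $(DF)F' = I + K$. Again by associativity, $(DF)F' = D(FF')$ with $FF' \in B^a(H_{\mathcal{A}})$, so $D$ satisfies condition $2)$ of Theorem 2.3 with $FF'$ in place of $F$, and the implication $2)\Rightarrow 1)$ of Theorem 2.3 yields $D \in \mathcal{M}\Phi_{-}(H_{\mathcal{A}})$.

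There is essentially no obstacle here: the corollary is a formal consequence of the fact that $\mathcal{M}\Phi_{+}(H_{\mathcal{A}})$ and $\mathcal{M}\Phi_{-}(H_{\mathcal{A}})$ are, respectively, the preimages under the quotient map $B^a(H_{\mathcal{A}}) \to B^a(H_{\mathcal{A}})/K(H_{\mathcal{A}})$ of the left-invertible and right-invertible elements of the Calkin algebra, and in any unital algebra left invertibility of a product $ab$ forces left invertibility of $b$ while right invertibility of $ab$ forces right invertibility of $a$. The only point worth a line of care is recording that the auxiliary operators $D'D$ and $FF'$ stay inside $B^a(H_{\mathcal{A}})$, which is immediate.
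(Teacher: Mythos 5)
Your proof is correct and is essentially the paper's own argument: the paper likewise applies Theorem 2.2 to get $C(DF)=I+K$ and reads this as $(CD)F=I+K$ to conclude $F\in\mathcal{M}\Phi_{+}(H_{\mathcal{A}})$, with the $\mathcal{M}\Phi_{-}$ statement handled symmetrically via Theorem 2.3. You have merely spelled out the associativity step and the dual case that the paper leaves as ``similar.''
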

\begin{proof}
Suppose that  $DF \in \mathcal{M}\Phi_{+}(H_{\mathcal{A}}).$ By Theorem 2.2 there exists some $  C \in B^{a} (H_{\mathcal{A}}), K \in K(H)     $ s.t. $CDF=I+K.$  Again, by Theorem 2.2 it follows that $F \in \mathcal{M}\Phi_{+}(H_{\mathcal{A}}).$ The proof of the second statement of Corollary 2.6 is similar.\\	
\end{proof}	
\begin{corollary}
Let $F,D \in B^{a}(H_{\mathcal{A}})  .$ If $DF \in \mathcal{M}\Phi_{+}(H_{\mathcal{A}}) $ and $ F\in \mathcal{M}\Phi(H_{\mathcal{A}}) ,$ then $D \in \mathcal{M}\Phi_{+}(H_{\mathcal{A}})  .$ If $ DF \in \mathcal{M}\Phi_{-}(H_{\mathcal{A}}) $ and $ D \in \mathcal{M}\Phi(H_{\mathcal{A}})  ,$ then $F \in \mathcal{M}\Phi_{-}(H_{\mathcal{A}})  .$
\end{corollary}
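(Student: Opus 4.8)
The plan is to reduce everything to the "Calkin" algebra $\mathcal{C} := B^{a}(H_{\mathcal{A}})/K(H_{\mathcal{A}})$ and to use the three dictionaries already established in this section. Write $\pi \colon B^{a}(H_{\mathcal{A}}) \to \mathcal{C}$ for the quotient map; recall that $K(H_{\mathcal{A}})$ is a closed two-sided ideal, so $\pi$ is a $*$-homomorphism of $C^{*}$-algebras. By Theorem 2.2, $F \in \mathcal{M}\Phi_{+}(H_{\mathcal{A}})$ if and only if $\pi(F)$ is left invertible in $\mathcal{C}$; by Theorem 2.3, $F \in \mathcal{M}\Phi_{-}(H_{\mathcal{A}})$ if and only if $\pi(F)$ is right invertible in $\mathcal{C}$; and by Corollary 2.4 together with \cite[Theorem 2.7.14]{MT} and the proof of \cite[Lemma 2.7.15]{MT}, $F \in \mathcal{M}\Phi(H_{\mathcal{A}})$ if and only if $\pi(F)$ is (two-sided) invertible in $\mathcal{C}$. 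Everything then follows from elementary manipulations with one-sided inverses in a unital algebra.

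For the first assertion, assume $DF \in \mathcal{M}\Phi_{+}(H_{\mathcal{A}})$ and $F \in \mathcal{M}\Phi(H_{\mathcal{A}})$. Then $\pi(D)\pi(F) = \pi(DF)$ is left invertible in $\mathcal{C}$, say $a\,\pi(D)\pi(F) = 1$ for some $a \in \mathcal{C}$, while $\pi(F)$ is invertible in $\mathcal{C}$. Right-multiplying $a\,\pi(D)\pi(F) = 1$ by $\pi(F)^{-1}$ gives $a\,\pi(D) = \pi(F)^{-1}$, and then left-multiplying by $\pi(F)$ gives $\pi(F)\,a\,\pi(D) = 1$. Thus $\pi(F)a$ is a left inverse of $\pi(D)$, so $\pi(D)$ is left invertible in $\mathcal{C}$, and Theorem 2.2 yields $D \in \mathcal{M}\Phi_{+}(H_{\mathcal{A}})$. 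Symmetrically, if $DF \in \mathcal{M}\Phi_{-}(H_{\mathcal{A}})$ and $D \in \mathcal{M}\Phi(H_{\mathcal{A}})$, then $\pi(D)\pi(F)$ is right invertible, say $\pi(D)\pi(F)b = 1$, while $\pi(D)$ is invertible; left-multiplying by $\pi(D)^{-1}$ and right-multiplying by $\pi(D)$ gives $\pi(F)\,b\,\pi(D) = 1$, so $b\,\pi(D)$ is a right inverse of $\pi(F)$, whence $\pi(F)$ is right invertible in $\mathcal{C}$ and Theorem 2.3 gives $F \in \mathcal{M}\Phi_{-}(H_{\mathcal{A}})$.

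I do not expect a genuine obstacle here; the argument is essentially formal once the characterizations of Theorems 2.2 and 2.3 and of Corollary 2.4 are available. The one point that must be handled with care is that the $\mathcal{M}\Phi$-hypothesis is used precisely to supply the \emph{two-sided} invertibility of $\pi(F)$ (respectively $\pi(D)$) needed to transport the inverse across the product; mere left or right invertibility of $\pi(F)$ would not suffice. If one prefers to avoid the quotient algebra, the same computation can be made directly with operators: choosing $C \in B^{a}(H_{\mathcal{A}})$ and $K \in K(H_{\mathcal{A}})$ with $CDF = I + K$ (Theorem 2.2) and $G \in B^{a}(H_{\mathcal{A}})$, $K_{2} \in K(H_{\mathcal{A}})$ with $FG = I + K_{2}$ (from $F \in \mathcal{M}\Phi(H_{\mathcal{A}})$), one multiplies $CDF = I + K$ on the right by $G$ and on the left by $F$ to obtain $(FC)D = I + K'$ with $K' \in K(H_{\mathcal{A}})$, and then applies Theorem 2.2; the second assertion is handled analogously using Theorem 2.3.
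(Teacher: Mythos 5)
Your proposal is correct and follows essentially the same route as the paper: the paper's proof of this corollary takes $C,K$ with $CDF=I+K$ (Theorem 2.2) and $F'$ with $FF'=I+K'$ (from $F\in\mathcal{M}\Phi(H_{\mathcal{A}})$, via the proof of Lemma 2.7.15 in the Manuilov--Troitsky book) and manipulates these identities to get $FCD=I+\text{compact}$, which is exactly the operator-level computation you sketch in your final paragraph; your Calkin-algebra phrasing is just a cleaner packaging of the same argument, and the second statement is handled symmetrically in both. The only cosmetic difference is that two-sided invertibility of $\pi(F)$ is more than is actually used -- both your computation and the paper's only need a right inverse of $F$ modulo compacts -- but this does not affect correctness.
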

\begin{proof}
Suppose that $DF \in \mathcal{M}\Phi_{+}(H_{\mathcal{A}})$ and $ F \in \mathcal{M}\Phi(H_{\mathcal{A}}).$
By Theorem 2.2 there exist some $  C \in B^{a} (H_{\mathcal{A}}), K \in K(H_{\mathcal{A}})    $ s.t. $ CDF=I+K,$ as $ DF \in \mathcal{M}\Phi_{+}(H_{\mathcal{A}})     $ by assumption. Moreover, since $F \in \mathcal{M}\Phi(H_{\mathcal{A}})      ,$ by the proof of \cite[ Lemma 2.7.15] {MT} there exist some $F^{\prime}  \in B^{a} (H_{\mathcal{A}}), K^{\prime} \in K(H_{\mathcal{A}}) $ s.t. $FF^{\prime}=I+K^{\prime}.$ Hence $CDFF^{\prime}=(CDF)F^{\prime}=(I+K)F^{\prime}=F^{\prime}+KF^{\prime}$ and $ CDFF^{\prime}=CD(FF^{\prime})=CD(I+K^{\prime})=CD+CDK ^{\prime}   .$  Therefore, 
$FF^{\prime}+FKF^{\prime}=FCD+FCDK^{\prime} .$ So $FCD=FF^{\prime}+FKF^{\prime}-FCDK^{\prime}=I+K^{\prime}+FKF^{\prime}-FCDK^{\prime}      .$ Since $ K^{\prime} +FKF^{\prime}-FCDK^{\prime} \in K(H_{\mathcal{A}}),$ by Theorem 2.2 it follows that $ D \in \mathcal{M}\Phi(H_{\mathcal{A}})     .$ The proof of the second statement of Corollary 2.7 is similar.
\end{proof}
\begin{corollary}
Let $ F,D \in B^{a}(H_{\mathcal{A}})  .$ If $D \in \mathcal{M}\Phi_{+}(H_{\mathcal{A}}) $ and $DF \in \mathcal{M}\Phi(H_{\mathcal{A}})  ,$ then $D \in \mathcal{M}\Phi(H_{\mathcal{A}})  .$ If $F \in \mathcal{M}\Phi_{-}(H_{\mathcal{A}}) $ and $DF \in \mathcal{M}\Phi(H_{\mathcal{A}}) ,$ then $F \in \mathcal{M}\Phi(H_{\mathcal{A}}) .$
\end{corollary}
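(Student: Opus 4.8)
The plan is to deduce this corollary purely from the already-established ``Calkin algebra'' characterizations, i.e.\ from Theorem 2.2, Theorem 2.3, together with Corollary 2.4 and Corollary 2.6; no new decomposition argument should be needed. The key observation is that $\mathcal{M}\Phi(H_{\mathcal{A}}) \subseteq \mathcal{M}\Phi_{+}(H_{\mathcal{A}}) \cap \mathcal{M}\Phi_{-}(H_{\mathcal{A}})$ trivially, so the hypothesis $DF \in \mathcal{M}\Phi(H_{\mathcal{A}})$ in particular gives $DF \in \mathcal{M}\Phi_{+}(H_{\mathcal{A}})$ and $DF \in \mathcal{M}\Phi_{-}(H_{\mathcal{A}})$.

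For the first statement, I would argue as follows. Since $DF \in \mathcal{M}\Phi_{-}(H_{\mathcal{A}})$, Corollary 2.6 yields $D \in \mathcal{M}\Phi_{-}(H_{\mathcal{A}})$. Combining this with the hypothesis $D \in \mathcal{M}\Phi_{+}(H_{\mathcal{A}})$, we get $D \in \mathcal{M}\Phi_{+}(H_{\mathcal{A}}) \cap \mathcal{M}\Phi_{-}(H_{\mathcal{A}})$, and hence $D \in \mathcal{M}\Phi(H_{\mathcal{A}})$ by Corollary 2.4. (Equivalently, one can phrase it entirely via one-sided invertibility in the Calkin algebra: $D$ is left invertible modulo $K(H_{\mathcal{A}})$ by hypothesis and Theorem 2.2, and right invertible modulo $K(H_{\mathcal{A}})$ because $DF$ is, by Theorem 2.3 and Corollary 2.6; an element of a $C^{*}$-algebra that is both left and right invertible is invertible, so $D$ is invertible in the Calkin algebra, i.e.\ $D \in \mathcal{M}\Phi(H_{\mathcal{A}})$.)

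For the second statement the argument is symmetric: since $DF \in \mathcal{M}\Phi_{+}(H_{\mathcal{A}})$, Corollary 2.6 gives $F \in \mathcal{M}\Phi_{+}(H_{\mathcal{A}})$, and together with the hypothesis $F \in \mathcal{M}\Phi_{-}(H_{\mathcal{A}})$ we obtain $F \in \mathcal{M}\Phi_{+}(H_{\mathcal{A}}) \cap \mathcal{M}\Phi_{-}(H_{\mathcal{A}}) = \mathcal{M}\Phi(H_{\mathcal{A}})$ by Corollary 2.4.

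There is essentially no serious obstacle here: the whole content has been absorbed into the equivalences of Theorems 2.2 and 2.3 and into Corollaries 2.4 and 2.6. The only point that needs a word of care is making explicit the inclusion $\mathcal{M}\Phi(H_{\mathcal{A}}) \subseteq \mathcal{M}\Phi_{\pm}(H_{\mathcal{A}})$ so that Corollary 2.6 can be applied to $DF$; everything else is a one-line bookkeeping combination of the previously proved results.
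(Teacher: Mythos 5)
Your proposal is correct and follows essentially the same route as the paper: deduce $D \in \mathcal{M}\Phi_{-}(H_{\mathcal{A}})$ (resp.\ $F \in \mathcal{M}\Phi_{+}(H_{\mathcal{A}})$) from the one-sided invertibility modulo $K(H_{\mathcal{A}})$ of $DF$, and then conclude via $\mathcal{M}\Phi_{+}(H_{\mathcal{A}}) \cap \mathcal{M}\Phi_{-}(H_{\mathcal{A}}) = \mathcal{M}\Phi(H_{\mathcal{A}})$ (Corollary 2.4). The only cosmetic difference is that the paper extracts the relation $DFC = I + K$ directly from the proof of the cited Fredholm result and applies Theorem 2.3, whereas you reach the same conclusion by noting $\mathcal{M}\Phi(H_{\mathcal{A}}) \subseteq \mathcal{M}\Phi_{\mp}(H_{\mathcal{A}})$ and invoking Corollary 2.6.
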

\begin{proof}
Let $D \in \mathcal{M}\Phi_{+}(H_{\mathcal{A}}) $ and $DF \in \mathcal{M}\Phi(H_{\mathcal{A}})  .$ Since $ DF \in \mathcal{M}\Phi(H_{\mathcal{A}})     ,$ by the proof of \cite[Lemma 2.7.15] {MT} there exists some $ C \in B^{a} (H_{\mathcal{A}}), K \in K(H) ,$ such that $ DFC=I+K     .$ By Theorem 2.3, we have then that $D \in \mathcal{M}\Phi_{-}(H_{\mathcal{A}})      .$ So $D \in \mathcal{M}\Phi_{+}(H_{\mathcal{A}}) \cap \mathcal{M}\Phi_{-}(H_{\mathcal{A}})      .$ But, $ \mathcal{M}\Phi(H_{\mathcal{A}}) = \mathcal{M}\Phi_{+}(H_{\mathcal{A}}) \cap \mathcal{M}\Phi_{-}(H_{\mathcal{A}})   $ by Corollary 2.4, so $ D \in \mathcal{M}\Phi(H_{\mathcal{A}})     .$ The proof of the second statement of Corollary 2.8 is similar.
\end{proof}
\begin{corollary}
If $D \in \mathcal{M}\Phi (H_{\mathcal{A}})$ and $ DF \in \mathcal{M}\Phi (H_{\mathcal{A}})$, then $F \in \mathcal{M}\Phi (H_{\mathcal{A}})$. If $ F \in \mathcal{M}\Phi (H_{\mathcal{A}})$ and $DF \in \mathcal{M}\Phi (H_{\mathcal{A}}),$ then $D \in \mathcal{M}\Phi (H_{\mathcal{A}}) .$
\end{corollary}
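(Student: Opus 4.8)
The plan is to reduce everything to the Calkin-algebra descriptions already assembled earlier in the section, so that the proof becomes the module analogue of the elementary ring-theoretic fact that invertibility of $a$ and of $ab$ forces invertibility of $b$, and invertibility of $b$ and of $ab$ forces invertibility of $a$. Recall, as noted in the proof of Corollary 2.4, that $\mathcal{M}\Phi(H_{\mathcal{A}})$ is exactly the preimage under the quotient map $\pi\colon B^{a}(H_{\mathcal{A}})\to B^{a}(H_{\mathcal{A}})/K(H_{\mathcal{A}})$ of the invertible elements of the Calkin algebra, while by Theorems 2.2 and 2.3 the classes $\mathcal{M}\Phi_{+}(H_{\mathcal{A}})$ and $\mathcal{M}\Phi_{-}(H_{\mathcal{A}})$ are the preimages of the left-invertible and right-invertible elements respectively, and by Corollary 2.4 one has $\mathcal{M}\Phi(H_{\mathcal{A}})=\mathcal{M}\Phi_{+}(H_{\mathcal{A}})\cap\mathcal{M}\Phi_{-}(H_{\mathcal{A}})$. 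With these facts in hand, the two implications follow by chaining Corollaries 2.6 and 2.7.

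For the first implication, suppose $D,DF\in\mathcal{M}\Phi(H_{\mathcal{A}})$. Since $DF\in\mathcal{M}\Phi(H_{\mathcal{A}})\subseteq\mathcal{M}\Phi_{+}(H_{\mathcal{A}})$, the first assertion of Corollary 2.6 gives $F\in\mathcal{M}\Phi_{+}(H_{\mathcal{A}})$. Since also $DF\in\mathcal{M}\Phi(H_{\mathcal{A}})\subseteq\mathcal{M}\Phi_{-}(H_{\mathcal{A}})$ and $D\in\mathcal{M}\Phi(H_{\mathcal{A}})$, the second assertion of Corollary 2.7 gives $F\in\mathcal{M}\Phi_{-}(H_{\mathcal{A}})$. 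Hence $F\in\mathcal{M}\Phi_{+}(H_{\mathcal{A}})\cap\mathcal{M}\Phi_{-}(H_{\mathcal{A}})=\mathcal{M}\Phi(H_{\mathcal{A}})$ by Corollary 2.4. For the second implication, suppose $F,DF\in\mathcal{M}\Phi(H_{\mathcal{A}})$. From $DF\in\mathcal{M}\Phi(H_{\mathcal{A}})\subseteq\mathcal{M}\Phi_{-}(H_{\mathcal{A}})$ and the second assertion of Corollary 2.6 we get $D\in\mathcal{M}\Phi_{-}(H_{\mathcal{A}})$; from $DF\in\mathcal{M}\Phi(H_{\mathcal{A}})\subseteq\mathcal{M}\Phi_{+}(H_{\mathcal{A}})$ together with $F\in\mathcal{M}\Phi(H_{\mathcal{A}})$ and the first assertion of Corollary 2.7 we get $D\in\mathcal{M}\Phi_{+}(H_{\mathcal{A}})$. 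Again Corollary 2.4 yields $D\in\mathcal{M}\Phi(H_{\mathcal{A}})$.

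Alternatively, one can bypass the one-sided corollaries and argue directly in the Calkin algebra: writing $a=\pi(D)$ and $b=\pi(F)$, invertibility of $a$ and of $ab=\pi(DF)$ gives $b=a^{-1}(ab)$ invertible, hence $F\in\mathcal{M}\Phi(H_{\mathcal{A}})$, and symmetrically invertibility of $b$ and of $ab$ gives $a=(ab)b^{-1}$ invertible, hence $D\in\mathcal{M}\Phi(H_{\mathcal{A}})$. Either way, there is essentially no obstacle: all the substantive work lies in the already-established equivalences between the $\mathcal{M}\Phi$, $\mathcal{M}\Phi_{+}$, $\mathcal{M}\Phi_{-}$ properties and (one-sided) invertibility in the Calkin algebra. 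The only point meriting a little care, if one uses the route through Corollaries 2.6 and 2.7, is to invoke the two halves of Corollary 2.7 with the factor that is assumed to lie in $\mathcal{M}\Phi(H_{\mathcal{A}})$ in exactly the role the statement prescribes, so that one never requires a two-sided hypothesis where only a one-sided conclusion is available.
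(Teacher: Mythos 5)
Your proof is correct and follows essentially the same route as the paper: reduce to the Atkinson-type characterizations (Theorems 2.2 and 2.3), get the missing one-sided property for the remaining factor, and conclude via $\mathcal{M}\Phi(H_{\mathcal{A}})=\mathcal{M}\Phi_{+}(H_{\mathcal{A}})\cap\mathcal{M}\Phi_{-}(H_{\mathcal{A}})$ from Corollary 2.4. The only difference is cosmetic: where you invoke Corollary 2.7 (and, in your alternative, argue directly with $\pi(D)$, $\pi(F)$ in the Calkin algebra), the paper re-runs the explicit parametrix computation $FCD=I+K''$ with $K''$ compact inline before applying Corollary 2.6 and Corollary 2.4, so your version is a slightly more economical packaging of the same argument.
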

\begin{proof}
Suppose that $D \in \mathcal{M}\Phi (H_{\mathcal{A}})$ and $ DF \in \mathcal{M}\Phi (H_{\mathcal{A}}).$ Since $  DF \in \mathcal{M}\Phi (H_{\mathcal{A}}),$ by the proof of \cite[Lemma 2.7.15] {MT}  there exist some $ C \in B^{a} (H_{\mathcal{A}}), K \in K(H) $ s.t $	DFC=I+K.$\\
Moreover, since $ D \in \mathcal{M}\Phi(H_{\mathcal{A}})     ,$ by the proof of \cite[Lemma 2.7.15] {MT} there exist some $ D^{\prime} \in B^{a} (H_{\mathcal{A}}), K^{\prime} \in K(H)$ s.t. $D^{\prime}D=I+K^{\prime}.$ Hence $D^{\prime}DFC=D^{\prime}(DFC)=D^{\prime}(I+K)=D^{\prime}+D^{\prime}K$ and $D^{\prime}DFC=(D^{\prime}D)FC=(I+K^{\prime})FC=FC+K^{\prime}FC.$ Thus $D^{\prime}+D^{\prime}K=FC+K^{\prime}FC.$ Hence $ D^{\prime}D+D^{\prime}KD=FCD+K^{\prime}FCD.$ But $D^{\prime}D=I+K^{\prime}      ,$ so we obtain $ I+ K^{\prime} +D^{\prime}KD=FCD+K^{\prime}FCD.$ So $ FCD=I+ K^{\prime} +  D^{\prime}KD - K^{\prime}FCD.$
Since $(K^{\prime} +  D^{\prime}KD - K^{\prime}FCD) \in K(H_{\mathcal{A}})     ,$ by Theorem 2.3 we have that $  F \in \mathcal{M}\Phi_{-}(H_{\mathcal{A}})     .$ Now, since $ DF \in \mathcal{M}\Phi(H_{\mathcal{A}}) \subseteq \mathcal{M}\Phi_{+}(H_{\mathcal{A}})     ,$ by Corollary 2.6 it follows that $F \in \mathcal{M}\Phi_{+}(H_{\mathcal{A}})      $ also. Hence $$F \in \mathcal{M}\Phi_{+}(H_{\mathcal{A}}) \cap \mathcal{M}\Phi_{-}(H_{\mathcal{A}})=\mathcal{M}\Phi(H_{\mathcal{A}})      $$ by Corollary 2.5. The proof of the second statement of Corollary 2.9 is similar. 
\end{proof}
\begin{corollary}
$ \mathcal{M}\Phi_{+}(H_{\mathcal{A}}) \setminus \mathcal{M}\Phi(H_{\mathcal{A}}) $ and $\mathcal{M}\Phi_{-}(H_{\mathcal{A}}) \setminus \mathcal{M}\Phi(H_{\mathcal{A}}) $ are two sided ideals in $\mathcal{M}\Phi_{+}(H_{\mathcal{A}}) $ and $\mathcal{M}\Phi_{-}(H_{\mathcal{A}}) $ respectively. In particular, they are semigroups under multiplication.
\end{corollary}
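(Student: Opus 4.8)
The plan is to derive this purely formally from the semigroup property (Corollary 2.5) together with the cancellation-type Corollaries 2.8 and 2.9, with no further analysis required. First I would record that, by Corollary 2.5, $\mathcal{M}\Phi_{+}(H_{\mathcal{A}})$ and $\mathcal{M}\Phi_{-}(H_{\mathcal{A}})$ are closed under multiplication; hence for $F$ lying in one of the sets $\mathcal{M}\Phi_{\pm}(H_{\mathcal{A}})\setminus\mathcal{M}\Phi(H_{\mathcal{A}})$ and $G$ in the corresponding ambient semigroup $\mathcal{M}\Phi_{\pm}(H_{\mathcal{A}})$, both products $GF$ and $FG$ automatically lie in $\mathcal{M}\Phi_{\pm}(H_{\mathcal{A}})$. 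So the whole task reduces to showing that such a product cannot be $\mathcal{A}$-Fredholm.

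For $\mathcal{M}\Phi_{+}(H_{\mathcal{A}})\setminus\mathcal{M}\Phi(H_{\mathcal{A}})$, take $F\in\mathcal{M}\Phi_{+}(H_{\mathcal{A}})\setminus\mathcal{M}\Phi(H_{\mathcal{A}})$ and $G\in\mathcal{M}\Phi_{+}(H_{\mathcal{A}})$. If $FG\in\mathcal{M}\Phi(H_{\mathcal{A}})$, then since $F\in\mathcal{M}\Phi_{+}(H_{\mathcal{A}})$ the first assertion of Corollary 2.8 gives $F\in\mathcal{M}\Phi(H_{\mathcal{A}})$, contradicting the choice of $F$; hence $FG\in\mathcal{M}\Phi_{+}(H_{\mathcal{A}})\setminus\mathcal{M}\Phi(H_{\mathcal{A}})$. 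If $GF\in\mathcal{M}\Phi(H_{\mathcal{A}})$, then, $G$ being in $\mathcal{M}\Phi_{+}(H_{\mathcal{A}})$, the first assertion of Corollary 2.8 gives $G\in\mathcal{M}\Phi(H_{\mathcal{A}})$, and then the first assertion of Corollary 2.9 (with $G$ in the role of $D$) gives $F\in\mathcal{M}\Phi(H_{\mathcal{A}})$, again a contradiction; hence $GF\in\mathcal{M}\Phi_{+}(H_{\mathcal{A}})\setminus\mathcal{M}\Phi(H_{\mathcal{A}})$. The case of $\mathcal{M}\Phi_{-}(H_{\mathcal{A}})\setminus\mathcal{M}\Phi(H_{\mathcal{A}})$ is the mirror image, using the second assertions of Corollaries 2.8 and 2.9: for $F\in\mathcal{M}\Phi_{-}(H_{\mathcal{A}})\setminus\mathcal{M}\Phi(H_{\mathcal{A}})$ and $G\in\mathcal{M}\Phi_{-}(H_{\mathcal{A}})$, having $GF\in\mathcal{M}\Phi(H_{\mathcal{A}})$ would force $F\in\mathcal{M}\Phi(H_{\mathcal{A}})$ by the second assertion of Corollary 2.8, whereas having $FG\in\mathcal{M}\Phi(H_{\mathcal{A}})$ would force $G\in\mathcal{M}\Phi(H_{\mathcal{A}})$ by the second assertion of Corollary 2.8 and then $F\in\mathcal{M}\Phi(H_{\mathcal{A}})$ by the second assertion of Corollary 2.9. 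This shows that each of the two sets absorbs multiplication from both sides inside the respective ambient semigroup, i.e. is a two-sided ideal there. The last clause is then immediate: a two-sided ideal $I$ of a semigroup $S$ satisfies $I\cdot I\subseteq S\cdot I\subseteq I$, so $I$ is in particular a subsemigroup.

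I do not expect a genuine obstacle: once Corollaries 2.5, 2.8 and 2.9 are available, the statement is formal. The only point demanding attention is the bookkeeping — matching each of the two symmetric halves of Corollaries 2.8 and 2.9 to the correct side (left- versus right-multiplication), and noticing that on one side a single application of Corollary 2.8 suffices while on the other side one additionally needs the extra step through Corollary 2.9.
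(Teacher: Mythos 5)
Your proposal is correct and follows essentially the same route as the paper: Corollary 2.5 puts the products back in $\mathcal{M}\Phi_{\pm}(H_{\mathcal{A}})$, and Corollaries 2.8 and 2.9 rule out the product being $\mathcal{A}$-Fredholm, exactly as in the paper's proof (you merely spell out the $\mathcal{M}\Phi_{-}$ case, which the paper dismisses with ``similarly'').
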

\begin{proof}
Let $F,D \in \mathcal{M}\Phi_{+}(H_{\mathcal{A}}) $ and suppose first that $F \in \mathcal{M}\Phi_{+}(H_{\mathcal{A}}) \setminus \mathcal{M}\Phi(H_{\mathcal{A}}) .$\\
Since $\mathcal{M}\Phi_{+}(H_{\mathcal{A}}) $ is a semigroup by  Corollary 2.5 $DF \in \mathcal{M}\Phi_{+}(H_{\mathcal{A}}) .$ Now, if $DF \in \mathcal{M}\Phi(H_{\mathcal{A}}) ,$ by  Corollary 2.8 we have $ D \in \mathcal{M}\Phi(H_{\mathcal{A}}) .$ Then, by  Corollary 2.9, it would follow that $F \in \mathcal{M}\Phi(H_{\mathcal{A}}) ,$ which is a contradiction. Thus we must have that $DF \in \mathcal{M}\Phi_{+}(H_{\mathcal{A}}) \setminus \mathcal{M}\Phi(H_{\mathcal{A}})  .$ Suppose next that $D \in \mathcal{M}\Phi_{+}(H_{\mathcal{A}}) \setminus \mathcal{M}\Phi(H_{\mathcal{A}})  .$ Again, if $DF \in \mathcal{M}\Phi(H_{\mathcal{A}}) ,$ then, since $D \in \mathcal{M}\Phi_{+}(H_{\mathcal{A}}) ,$ by  Corollary 2.8 we would have that $D \in \mathcal{M}\Phi(H_{\mathcal{A}}) ,$ which is impossible. So, also in this case, we must have that $DF \in \mathcal{M}\Phi_{+}(H_{\mathcal{A}}) \setminus \mathcal{M}\Phi(H_{\mathcal{A}}) .$\\
Similarly one can prove the statement for $\mathcal{M}\Phi_{-}(H_{\mathcal{A}}) \setminus \mathcal{M}\Phi(H_{\mathcal{A}}) $
\end{proof}
\begin{corollary}
Let $ F \in B^{a}(M,N) .$ Then $ F \in \mathcal{M}\Phi_{+}(M,N) $ if and only if $ F^{*} \in \mathcal{M}\Phi_{-}(N,M) .$ Moreover, if $F \in \mathcal{M}\Phi(H_{\mathcal{A}})  ,$ then $ F^{*} \in \mathcal{M}\Phi(H_{\mathcal{A}}) $ and $ {\rm index} F = - {\rm index} F^{*}.$
\end{corollary}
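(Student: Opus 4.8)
The plan is to prove the equivalence $F\in\mathcal{M}\Phi_{+}(M,N)\Leftrightarrow F^{*}\in\mathcal{M}\Phi_{-}(N,M)$ and the index identity separately, in both cases working directly with the block decompositions of Definition 2.1 rather than with the Calkin-algebra characterisations of Theorems 2.2 and 2.3 (the latter are available only on $H_{\mathcal{A}}$, whereas the first assertion is about arbitrary $M,N$). The recurring obstruction is that the decompositions in Definition 2.1 are merely topological, so one cannot pass to adjoints just by transposing a block matrix; the way around it is that the \emph{finitely generated} summand is orthogonally complementable by \cite[Lemma 2.3.7]{MT}, which lets one replace the relevant decomposition by an orthogonal one and then diagonalise using honest adjointable operators on the whole module, of the type $U$ already used in the proof of Theorem 2.2.

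First I would prove: if $G\in\mathcal{M}\Phi_{+}(M,N)$ then $G^{*}\in\mathcal{M}\Phi_{-}(N,M)$. Starting from an upper semi-$\mathcal{A}$-Fredholm decomposition $M=M_{1}\tilde\oplus N_{1}\stackrel{G}{\longrightarrow}M_{2}\tilde\oplus N_{2}=N$ with $G=\mathrm{diag}(G_{1},G_{4})$, $G_{1}$ an isomorphism and $N_{1}$ finitely generated, the same reasoning as in the proof of Theorem 2.2 shows that $N_{1}$ is orthogonally complementable, $G$ is bounded below on $N_{1}^{\perp}$, and $G(N_{1}^{\perp})$ is closed and orthogonally complementable in $N$; hence with respect to $M=N_{1}^{\perp}\oplus N_{1}\stackrel{G}{\longrightarrow}G(N_{1}^{\perp})\oplus G(N_{1}^{\perp})^{\perp}=N$ the operator $G$ has an upper triangular matrix $\left[\begin{smallmatrix}\tilde G_{1}&\tilde G_{2}\\0&\tilde G_{4}\end{smallmatrix}\right]$ with $\tilde G_{1}$ an isomorphism and adjointable entries. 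Taking adjoints, $G^{*}$ has the lower triangular matrix $\left[\begin{smallmatrix}\tilde G_{1}^{*}&0\\ \tilde G_{2}^{*}&\tilde G_{4}^{*}\end{smallmatrix}\right]$ with respect to $N=G(N_{1}^{\perp})\oplus G(N_{1}^{\perp})^{\perp}\stackrel{G^{*}}{\longrightarrow}N_{1}^{\perp}\oplus N_{1}=M$. I would then remove the lower-left entry by conjugating with the elementary lower triangular isomorphism $V$ of $N_{1}^{\perp}\oplus N_{1}$ built from $\tilde G_{2}^{*}(\tilde G_{1}^{*})^{-1}$ (the transposed analogue of the operator $U$ of Theorem 2.2): relative to $N=G(N_{1}^{\perp})\oplus G(N_{1}^{\perp})^{\perp}\stackrel{G^{*}}{\longrightarrow}V(N_{1}^{\perp})\tilde\oplus V(N_{1})=M$ the matrix of $G^{*}$ is $\mathrm{diag}(\tilde G_{1}^{*},\tilde G_{4}^{*})$, and since $N_{1}$ is finitely generated so is $V(N_{1})$; thus $G^{*}\in\mathcal{M}\Phi_{-}(N,M)$. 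The implication $G\in\mathcal{M}\Phi_{-}(M,N)\Rightarrow G^{*}\in\mathcal{M}\Phi_{+}(N,M)$ is proved symmetrically, using that the finitely generated summand $N_{2}$ on the range side is orthogonally complementable. Applying these two implications to $F$ and to $F^{*}$ and using $F^{**}=F$ gives the equivalence.

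For the ``moreover'' part, if $F\in\mathcal{M}\Phi(H_{\mathcal{A}})$ then $F\in\mathcal{M}\Phi_{+}(H_{\mathcal{A}})\cap\mathcal{M}\Phi_{-}(H_{\mathcal{A}})$, so by what was just proved $F^{*}\in\mathcal{M}\Phi_{-}(H_{\mathcal{A}})\cap\mathcal{M}\Phi_{+}(H_{\mathcal{A}})=\mathcal{M}\Phi(H_{\mathcal{A}})$ by Corollary 2.4. For the index, I would fix an $\mathcal{M}\Phi$ decomposition $H_{\mathcal{A}}=M_{1}\tilde\oplus N_{1}\stackrel{F}{\longrightarrow}M_{2}\tilde\oplus N_{2}=H_{\mathcal{A}}$ with $F=\mathrm{diag}(F_{1},F_{4})$, $F_{1}$ an isomorphism and $N_{1},N_{2}$ finitely generated, so $\mathrm{index}\,F=[N_{1}]-[N_{2}]$ by \cite[Definition 2.7.8]{MT}. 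Since $N_{1},N_{2}$ are orthogonally complementable, I would check that $T:=P_{N_{2}^{\perp}}F|_{N_{1}^{\perp}}\colon N_{1}^{\perp}\to N_{2}^{\perp}$ is an adjointable isomorphism: it is adjointable because $T=P_{N_{2}^{\perp}}F\iota_{N_{1}^{\perp}}$, and it is an isomorphism because it factors as $N_{1}^{\perp}\to M_{1}\stackrel{F_{1}}{\longrightarrow}M_{2}\to N_{2}^{\perp}$ through the (topological) projection onto $M_{1}$ along $N_{1}$, the isomorphism $F_{1}$, and the orthogonal projection onto $N_{2}^{\perp}$, using $F_{4}(N_{1})\subseteq N_{2}$ and $N_{2}\perp N_{2}^{\perp}$. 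Writing the matrix of $F$ with respect to the orthogonal decompositions $N_{1}^{\perp}\oplus N_{1}\to N_{2}^{\perp}\oplus N_{2}$ as $\left[\begin{smallmatrix}T&B\\C&D\end{smallmatrix}\right]$ with adjointable entries, invertibility of $T$ gives $F=L\cdot\mathrm{diag}(T,\,D-CT^{-1}B)\cdot R$ with $L=\left[\begin{smallmatrix}1&0\\ CT^{-1}&1\end{smallmatrix}\right]$ and $R=\left[\begin{smallmatrix}1&T^{-1}B\\0&1\end{smallmatrix}\right]$ adjointable isomorphisms of $H_{\mathcal{A}}$; hence $F^{*}=R^{*}\cdot\mathrm{diag}\bigl(T^{*},(D-CT^{-1}B)^{*}\bigr)\cdot L^{*}$, which reads as: $F^{*}$ has matrix $\mathrm{diag}\bigl(T^{*},(D-CT^{-1}B)^{*}\bigr)$ with respect to $H_{\mathcal{A}}=(L^{*})^{-1}(N_{2}^{\perp})\tilde\oplus(L^{*})^{-1}(N_{2})\stackrel{F^{*}}{\longrightarrow}R^{*}(N_{1}^{\perp})\tilde\oplus R^{*}(N_{1})=H_{\mathcal{A}}$, with $T^{*}$ an isomorphism and $(L^{*})^{-1}(N_{2})\cong N_{2}$, $R^{*}(N_{1})\cong N_{1}$ finitely generated. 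This is an $\mathcal{M}\Phi$ decomposition for $F^{*}$, so by well-definedness of the index \cite[Definition 2.7.9]{MT},
\[
\mathrm{index}\,F^{*}=[(L^{*})^{-1}(N_{2})]-[R^{*}(N_{1})]=[N_{2}]-[N_{1}]=-\,\mathrm{index}\,F .
\]

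I expect the crux to be exactly this passage from a topological (skew) decomposition to adjoints: the argument succeeds because one first manufactures an honest adjointable isomorphism between the orthogonal complements of the finitely generated summands, together with the adjointable conjugators ($L,R$, resp.\ $V$) that diagonalise the block matrix, after which taking adjoints is routine.
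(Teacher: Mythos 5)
Your proposal is correct and follows essentially the same route as the paper: for the equivalence, both pass to the orthogonal decompositions built from the finitely generated summand (as in the proofs of Theorems 2.2 and 2.3, part $1)\Rightarrow 2)$), take adjoints of the resulting triangular block matrix, and then diagonalise with an elementary adjointable triangular isomorphism, noting that the image of the finitely generated summand stays finitely generated. For the ``moreover'' part your explicit $L\,\mathrm{diag}(T,\,D-CT^{-1}B)\,R$ factorisation and appeal to the well-definedness of the index just spell out in detail what the paper dispatches with ``using again the technique of diagonalization,'' so it is the same argument, only more explicit.
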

\begin{proof}
Observe that it follows from the proofs of Theorems 2.2 and 2.3 part $1) \Rightarrow 2) $ which could be generalized to the case when $F \in B^{a}(M,N) $ (and not only when $F \in B^{a} (H_{\mathcal{A}}))$ that if $F \in \mathcal{M}\Phi_{+}(M,N) $ then for $F$ and consequently for $F^{*} $ there exist decompositions
\begin{center}
	$M = M_{1} \oplus {M_{1}^{\bot}}‎‎\stackrel{F}{\longrightarrow} M_{2} \oplus M_{2}^{\bot}= N $
\end{center}
\begin{center}
	$N = M_{2} \oplus {M_{2}^{\bot}}‎‎\stackrel{F^{*}}{\longrightarrow} M_{1} \oplus M_{1}^{\bot}= M $\\
\end{center}
with respect to which $F$ and $F^{*}$ have matrices 
\begin{center}
	$\left\lbrack
	\begin{array}{ll}
	F_{1} & F_{2} \\
	0 & F_{4} \\
	\end{array}
	\right \rbrack
	$,
\end{center} 
\begin{center}
	$\left\lbrack
	\begin{array}{ll}
	F_{1}^{*} & 0 \\
	F_{2}^{*} & F_{4}^{*} \\
	\end{array}
	\right \rbrack,
	$
\end{center} 
respectively, where $F_{1},F_{1}^{*}$ are isomorphisms and $M_{1}^{\bot}$ is finitely generated. Using the technique of diagonalization as in the proof of \cite[ Lemma 2.7.10] {MT}, we deduce that $ F^{*} \in \mathcal{M}\Phi_{-}(N,M)$ as $M_{1}^{\perp}$ is finitely generated. The proof is analogue when $ F \in \mathcal{M}\Phi_{-}(N,M),$ only in this case $M_{2}^{\perp}$ is finitely generated. If in addition $F$ is in $\mathcal{M}\Phi(H_{\mathcal{A}}) $, then both $M_{1}^{\perp}$ and $M_{2}^{\bot}$ will be finitely generated. Using again the tecnique of diagonalization, one deduces easily that $F^{*} \in \mathcal{M}\Phi(H_{\mathcal{A}}) $ in this case and ${\rm index} F=[M_{1}^{\bot}]-[M_{2}^{\bot}]$, ${\rm index} F^{*}=[M_{2}^{\bot}]-[M_{1}^{\bot}]$, so ${\rm index} F=- {\rm index}  F^{*} .$
\end{proof}
\begin{lemma}
Let M be a closed submodule of $H_{\mathcal{A}}$ s.t. $H_{\mathcal{A}} = M \tilde \oplus N$ for some finitely generated submodule $N.$
Let $ F \in B^{a}(H_{\mathcal{A}})$ , $J_{M}$ be the inclusion map from $M$ into $H_{\mathcal{A}}$ and suppose that $FJ_{M} \in \mathcal{M}\Phi_{+}(M,H_{\mathcal{A}}).$
Then $F \in \mathcal{M}\Phi_{+}(H_{\mathcal{A}}).$
\end{lemma}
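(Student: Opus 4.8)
The plan is to exhibit, using Definition 2.1, a decomposition of $H_{\mathcal{A}}$ of the required type for $F$, built out of an $\mathcal{M}\Phi_{+}$ decomposition for $FJ_{M}$ on $M$ together with the splitting $H_{\mathcal{A}} = M\tilde\oplus N$; the role of $N$ being finitely generated is to keep the ``finite corner'' of the new decomposition finitely generated. This mirrors the diagonalization technique already used in the proof of Theorem 2.2.

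First I would fix a decomposition $M = M_{1}\tilde\oplus N_{1} \stackrel{FJ_{M}}{\longrightarrow} M_{2}\tilde\oplus N_{2} = H_{\mathcal{A}}$ with respect to which $FJ_{M}$ has the matrix $\left\lbrack\begin{array}{ll} G_{1} & 0 \\ 0 & G_{4}\end{array}\right\rbrack$, where $G_{1}:M_{1}\to M_{2}$ is an isomorphism and $N_{1}$ is finitely generated. Combining $H_{\mathcal{A}} = M\tilde\oplus N$ with $M = M_{1}\tilde\oplus N_{1}$ gives $H_{\mathcal{A}} = M_{1}\tilde\oplus L$ with $L := N_{1}\tilde\oplus N$; here $L$ is closed (it is the kernel of the bounded idempotent obtained by composing the projection of $H_{\mathcal{A}}$ onto $M$ along $N$ with the projection of $M$ onto $M_{1}$ along $N_{1}$), and it is finitely generated, being spanned by the union of finite generating sets of $N_{1}$ and of $N$. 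Since $J_{M}$ is the inclusion of $M$, we have $F|_{M_{1}} = (FJ_{M})|_{M_{1}} = G_{1}$, hence $F(M_{1}) = M_{2}$, and with respect to $H_{\mathcal{A}} = M_{1}\tilde\oplus L \stackrel{F}{\longrightarrow} M_{2}\tilde\oplus N_{2} = H_{\mathcal{A}}$ the operator $F$ has a block upper triangular matrix $\left\lbrack\begin{array}{ll} G_{1} & B \\ 0 & C\end{array}\right\rbrack$, with $B$ and $C$ the compositions of $F|_{L}$ with the two bounded projections determined by $H_{\mathcal{A}} = M_{2}\tilde\oplus N_{2}$.

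The last step is to straighten the off-diagonal entry $B$ by changing the complement of $M_{1}$. I would replace $L$ by the ``graph'' submodule $\widetilde L := \{\,\ell - G_{1}^{-1}B\ell : \ell\in L\,\}$. A direct check shows: the map $\ell\mapsto \ell - G_{1}^{-1}B\ell$ is bounded below (composing it with the projection onto $L$ along $M_{1}$ gives $\mathrm{id}_{L}$), so $\widetilde L$ is closed and isomorphic to $L$, hence finitely generated; $H_{\mathcal{A}} = M_{1}\tilde\oplus \widetilde L$, since any $x = m + \ell$ with $m\in M_{1}$, $\ell\in L$ equals $(m + G_{1}^{-1}B\ell) + (\ell - G_{1}^{-1}B\ell)$; and for $y = \ell - G_{1}^{-1}B\ell\in\widetilde L$ one has $Fy = G_{1}(-G_{1}^{-1}B\ell) + (B\ell + C\ell) = -B\ell + B\ell + C\ell = C\ell\in N_{2}$. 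Thus with respect to $H_{\mathcal{A}} = M_{1}\tilde\oplus \widetilde L \stackrel{F}{\longrightarrow} M_{2}\tilde\oplus N_{2} = H_{\mathcal{A}}$ the operator $F$ has the matrix $\left\lbrack\begin{array}{ll} G_{1} & 0 \\ 0 & F_{4}\end{array}\right\rbrack$ with $G_{1}$ an isomorphism and $\widetilde L$ finitely generated, so $F\in\mathcal{M}\Phi_{+}(H_{\mathcal{A}})$.

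The point that needs care, and which I expect to be the main obstacle when writing out the details, is the bookkeeping with the non-orthogonal topological direct sums: one must verify that $L = N_{1}\tilde\oplus N$ and the graph $\widetilde L$ really are closed submodules yielding topological decompositions of $H_{\mathcal{A}}$, and that passing to the new complement $\widetilde L$ is legitimate even though the associated change of variables need not be adjointable (this causes no problem, since Definition 2.1 only requires the \emph{existence} of such a decomposition). There is a shorter route through the Calkin algebra: from $FJ_{M}\in\mathcal{M}\Phi_{+}(M,H_{\mathcal{A}})$ one gets, by the form of Theorem 2.2 valid for adjointable operators between Hilbert modules (the generalization used in the proof of Corollary 2.11), some $C\in B^{a}(H_{\mathcal{A}},M)$ and $K\in K(M)$ with $CFJ_{M} = I_{M} + K$; since $N$ is finitely generated, $J_{M}\in\mathcal{M}\Phi_{-}(M,H_{\mathcal{A}})$ via the decomposition $M = M\tilde\oplus\{0\} \stackrel{J_{M}}{\longrightarrow} M\tilde\oplus N = H_{\mathcal{A}}$, which by (the generalization of) Theorem 2.3 gives $J'\in B^{a}(H_{\mathcal{A}},M)$ and $\widetilde K\in K(H_{\mathcal{A}})$ with $J_{M}J' = I + \widetilde K$; a computation patterned on the proof of Corollary 2.7 then yields $(J_{M}C)F = I + K''$ with $K''\in K(H_{\mathcal{A}})$, and Theorem 2.2 gives $F\in\mathcal{M}\Phi_{+}(H_{\mathcal{A}})$. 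The subtlety there is that $J_{M}$ need not be adjointable unless $M$ is orthogonally complementable, so one would have to argue separately that the parametrix lies in $B^{a}(H_{\mathcal{A}})$; for this reason I would favour the decomposition argument above.
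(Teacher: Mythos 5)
Your proposal is correct and takes essentially the same route as the paper: the paper likewise combines the $\mathcal{M}\Phi_{+}$ decomposition of $FJ_{M}$ with $H_{\mathcal{A}}=M\tilde\oplus N$ to get $H_{\mathcal{A}}=M_{1}\tilde\oplus(N_{1}\tilde\oplus N)$, notes that $F$ is then upper triangular with an invertible corner and the complement finitely generated, and diagonalizes. Your explicit graph submodule $\widetilde L$ is exactly the new complement produced by the diagonalization technique of \cite[Lemma 2.7.10]{MT} that the paper invokes by citation, so the two arguments coincide.
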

\begin{proof}
Consider a decomposition $M= M_{1} \tilde \oplus {M_{2}}‎‎\stackrel{FJ_{M}}{\longrightarrow} \tilde M_{1} \tilde \oplus \tilde M_{2}= H_{\mathcal{A}} $ with respect to which 
\begin{center}
	$FJ_{M}=\left\lbrack
	\begin{array}{ll}
	(FJ_{M})_{1} & 0 \\
	0 & (FJ_{M})_{4} \\
	\end{array}
	\right \rbrack,
	$
\end{center}
where $(FJ_{M})_{1}$ is an isomorphism and $M_{2}$ is finitely generated. Then $F$ has the matrix 
\begin{center}
	$\left\lbrack
	\begin{array}{ll}
	F_{1} & F_{2} \\
	0 & F_{4} \\
	\end{array}
	\right \rbrack
	$
\end{center}
with respect to the decomposition 
$$H_{\mathcal{A}}= M_{1} \tilde \oplus (M_{2} \tilde \oplus {N)}‎‎\stackrel{F}{\longrightarrow} \tilde M_{1} \tilde \oplus \tilde M_{2}= H_{\mathcal{A}},$$
where $F_{1}$ is an isomorphism. Using the technique of diagonalization as in the proof of \cite[ Lemma 2.7.10] {MT} and the fact that $ M_{2} \tilde \oplus N$ is finitely generated as both $M_{2}$ and $N$ are so, we deduce that $ F \in {\mathcal{M}\Phi_{+}}(H_{\mathcal{A}}) . $
\end{proof}
Suppose now that $F \in B^{a}(H_{\mathcal{A}})$ and that ${\rm ran}F$ is closed. Then, again by \cite[ Theorem 2.3.3 ] {MT}, ${\rm ran}F$ is orthogonally complementable in $H_{\mathcal{A}}$, so $J_{{\rm ran}F} \in B^{a}({\rm ran}F,H_{\mathcal{A}})$.
\begin{lemma}
Suppose that $D,F \in B^{a}(H_{\mathcal{A}})$ $DF \in \mathcal{M}\Phi_{+}(H_{\mathcal{A}}) $ and ${\rm ran}F$ is closed. Then
$DJ_{{\rm ran}F} \in \mathcal{M}\Phi_{+}({\rm ran}F,H_{\mathcal{A}}) .$
\end{lemma}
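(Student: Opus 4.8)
The plan is to reduce the claim, via the version of Theorem~2.2 for operators between Hilbert $C^{*}$-modules, to exhibiting a left inverse of $DJ_{{\rm ran}F}$ modulo compact operators.

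Write $R={\rm ran}F$. Since $R$ is closed it is orthogonally complementable by \cite[Theorem 2.3.3]{MT}, so the inclusion $J_{R}\in B^{a}(R,H_{\mathcal{A}})$ has adjoint the orthogonal projection $P_{R}=J_{R}^{*}\in B^{a}(H_{\mathcal{A}},R)$. Put $F_{0}:=P_{R}F\in B^{a}(H_{\mathcal{A}},R)$; since ${\rm ran}F\subseteq R$ we have $F=J_{R}F_{0}$, and ${\rm ran}F_{0}=R$, i.e. $F_{0}$ is surjective. An adjointable surjection onto $R$ admits an adjointable right inverse: $F_{0}F_{0}^{*}\in B^{a}(R)$ is positive, has closed range $R$, and is injective because $F_{0}$ is onto, hence invertible, so $G:=F_{0}^{*}(F_{0}F_{0}^{*})^{-1}\in B^{a}(R,H_{\mathcal{A}})$ satisfies $F_{0}G=I_{R}$. (Equivalently, $F_{0}$ restricts to an isomorphism of $(\ker F_{0})^{\perp}$ onto $R$ and $G$ is the composite of its inverse with the inclusion; here $\ker F_{0}$ is complementable again by \cite[Theorem 2.3.3]{MT}.)

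Since $DF\in\mathcal{M}\Phi_{+}(H_{\mathcal{A}})$, Theorem~2.2 provides $C\in B^{a}(H_{\mathcal{A}})$ and $K\in K(H_{\mathcal{A}})$ with $CDF=I+K$. Substituting $F=J_{R}F_{0}$ gives $(CDJ_{R})F_{0}=I+K$; right-multiplying by $G$ and using $F_{0}G=I_{R}$ yields $CDJ_{R}=(I+K)G$, and left-multiplying by $F_{0}$ gives
$$(F_{0}C)\,(DJ_{R})=F_{0}(I+K)G=I_{R}+F_{0}KG .$$
Here $F_{0}C\in B^{a}(H_{\mathcal{A}},R)$, and $F_{0}KG\in K(R)$, since pre- and post-composition of a compact operator with adjointable operators preserves compactness (on a rank-one operator $S\theta_{x,y}T=\theta_{Sx,T^{*}y}$). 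Thus $DJ_{R}$ is left invertible modulo $K(R)$, and the form of Theorem~2.2 for operators between Hilbert $C^{*}$-modules — the generalization already invoked in the proof of Corollary~2.11 — gives $DJ_{{\rm ran}F}\in\mathcal{M}\Phi_{+}({\rm ran}F,H_{\mathcal{A}})$.

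The step that genuinely uses the hypothesis that ${\rm ran}F$ is closed is the construction of the adjointable right inverse $G$ of $F_{0}$; everything else is bookkeeping of the domains and codomains so that the final identity is read in $B^{a}(R)$ and $F_{0}KG$ is honestly a compact operator on $R$. If one prefers to avoid the module-to-module version of Theorem~2.2, one can instead transcribe its implication $2)\Rightarrow 1)$ for $DJ_{R}$ directly, using that $R$ is orthogonally complementable in $H_{\mathcal{A}}$ so the same argument with $F_{0}K G$ in place of $K$ applies verbatim.
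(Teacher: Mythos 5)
Your reduction is clean up to the last step: writing $F=J_{R}F_{0}$ with $F_{0}=P_{R}F$ surjective, constructing the adjointable right inverse $G=F_{0}^{*}(F_{0}F_{0}^{*})^{-1}$ (this is indeed where closedness of ${\rm ran}F$ enters, and your verification that $F_{0}F_{0}^{*}$ is invertible is correct), and deriving $(F_{0}C)(DJ_{R})=I_{R}+F_{0}KG$ with $F_{0}KG\in K(R)$ --- all of that is fine. The gap is the concluding appeal to ``Theorem 2.2 for operators between Hilbert $C^{*}$-modules''. What the paper actually generalizes to $B^{a}(M,N)$ (in the proof of Corollary 2.11) is only the implication $1)\Rightarrow 2)$ of Theorems 2.2 and 2.3, i.e.\ from an $\mathcal{M}\Phi_{\pm}$ decomposition to one-sided invertibility modulo compacts. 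You need the converse implication $2)\Rightarrow 1)$ with the domain $H_{\mathcal{A}}$ replaced by $R={\rm ran}F$, and that direction does not transfer ``verbatim'': its first step is that $I+K$ is $\mathcal{A}$-Fredholm, by \cite[Lemma 2.7.12]{MT}, and the later steps use \cite[Theorems 2.7.5, 2.7.6]{MT}; these are statements about the standard module $H_{\mathcal{A}}$, proved using its filtration $\{L_{n}\}$, and none of them is available for a general closed, orthogonally complementable submodule $R\subsetneq H_{\mathcal{A}}$. So the assertion that $I_{R}+F_{0}KG$ admits an $\mathcal{M}\Phi$ decomposition of $R$ (equivalently, that left invertibility modulo $K(R)$ implies membership in $\mathcal{M}\Phi_{+}(R,H_{\mathcal{A}})$) is exactly the point left unproved; it is plausibly true (e.g.\ via Kasparov stabilization $R\oplus H_{\mathcal{A}}\cong H_{\mathcal{A}}$), but it requires an argument you have neither given nor can simply cite from the paper.

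For comparison, the paper's proof avoids any Atkinson-type characterization on $R$: it takes an $\mathcal{M}\Phi_{+}$ decomposition $H_{\mathcal{A}}=M_{1}\tilde\oplus N_{1}\stackrel{DF}{\longrightarrow}M_{2}\tilde\oplus N_{2}=H_{\mathcal{A}}$ and checks directly that ${\rm ran}F=(D^{-1}(M_{2})\cap{\rm ran}F)\,\tilde\oplus\,(D^{-1}(N_{2})\cap{\rm ran}F)$, that with respect to this decomposition and $M_{2}\tilde\oplus N_{2}$ the operator $DJ_{{\rm ran}F}$ is diagonal with an isomorphism in the upper left corner, and that $D^{-1}(N_{2})\cap{\rm ran}F=F(N_{1})$ is finitely generated. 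Rewriting your argument along these lines, or else supplying a proof of the Atkinson characterization for operators whose domain is a complementable submodule of $H_{\mathcal{A}}$, would close the gap.
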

\begin{proof}
Let $H_{\mathcal{A}} = M_{1} \tilde \oplus {N_{1}}‎‎\stackrel{DF}{\longrightarrow} M_{2} \tilde \oplus N_{2}= H_{\mathcal{A}} $ be the decomposition with respect to which $DF$ has the matrix
\begin{center}
	$	\left\lbrack
	\begin{array}{ll}
	(DF)_{1} & 0 \\
	0 & (DF)_{4} \\
	\end{array}
	\right \rbrack,
	$
\end{center} 
where $(DF)_{1}$ is an isomorphism and $N_{1}$ is finitely generated.\\
Since $D(D^{-1}(M_{2}) \cap {\rm ran}F)=M_{2}$, that is
$DJ_{{\rm ran}F}(D^{-1}(M_{2})\cap {\rm ran}F)=M_{2}$ , we get that
$${\rm ran}F=((D^{-1}(M_{2}) \cap {\rm ran}F) \tilde \oplus ((D^{-1}(N_{2}) \cap {\rm ran}F).$$
To see this, let $y \in {\rm ran}F$, then $Dy=Dx_{m}+x_{n}$, for some $ x_{m} \in D^{-1}(M_{2}) \cap {\rm ran}F$ and for some $x_{n} \in N_{2}$.\\
Hence $x_{n} \in D(y-x_{m})$, so $x_{n} \in D({\rm ran}F)$. As
$x_{n} \in N_{2}$ also, we have $x_{n} \in D({\rm ran}F) \cap N_{2}$.
Thus $(y-x_{m}) \in {\rm ran}F \cap D^{-1}(N_{2})$ . Since $y=x_{m}+(y-x_{m}), x_{m} \in D^{-1}(M_{2}) \cap {\rm ran}F$
and
$(y-x_{m}) \in D^{-1}(N_{2}) \cap {\rm ran}F,$ we deduce that 
$${\rm ran}F=(D^{-1}(M_{2}) \cap {\rm ran}F) \tilde \oplus ((D^{-1}(N_{2}) \cap {\rm ran}F).$$
as $ y \in {\rm ran}F$ was arbitrary. With respect to the decomposition 
$${\rm ran}F=(D^{-1}(M_{2}) \cap {\rm ran}F) \tilde \oplus ((D^{-1}(N_{2}) \cap {\rm ran}F)‎‎\stackrel{DJ_{{\rm ran}F}}{\longrightarrow} M_{2} \tilde \oplus N_{2}= H_{\mathcal{A}} $$ 
$DJ_{{\rm ran}F}$ has the matrix 
\begin{center}
	$\left\lbrack
	\begin{array}{ll}
	(DJ_{{\rm ran}F})_{1} & 0 \\
	0 & (DJ_{{\rm ran}F})_{4} \\
	\end{array}
	\right \rbrack,
	$
\end{center}
where $(DJ_{{\rm ran}F})_{1}$ is an isomorphism.
Now, since $DF$ has the matrix 
\begin{center}
	$	\left\lbrack
	\begin{array}{ll}
	(DF)_{1} & 0 \\
	0 & (DF)_{4} \\
	\end{array}
	\right \rbrack
	$
\end{center}
with respect to the decomposition $$H_{\mathcal{A}} = M_{1} \tilde \oplus {N_{1}} ‎‎\stackrel{DF}{\longrightarrow} M_{2} \tilde \oplus N_{2}= H_{\mathcal{A}} $$ it is easily seen that $ D^{-1}(N_{2}) \cap {\rm ran}F=F(N_{1})$ which is finitely generated. We are done.
\end{proof}
\begin{corollary}
Let V be a finitely generated Hilbert submodule of $H_{\mathcal{A}},$ $F \in B^{a}(H_{\mathcal{A}})$ and suppose that 
$P_{V^{\bot}} F \in \mathcal{M}\Phi_{-} (H_{\mathcal{A}},V^{\bot})$, where $P_{V^{\bot}}$ is the orthogonal projection onto $V^{\bot}$ along $V.$ Then $F \in \mathcal{M}\Phi_{-}(H_{\mathcal{A}}) .$
\end{corollary}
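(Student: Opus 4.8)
The plan is to deduce the statement from Lemma 2.13 by passing to adjoints, so that essentially no work beyond bookkeeping is needed. First I would note that, since $V$ is finitely generated, it is orthogonally complementable in $H_{\mathcal{A}}$ by \cite[Lemma 2.3.7]{MT}; hence $H_{\mathcal{A}} = V^{\bot} \tilde\oplus V$ with $V^{\bot}$ closed, and the corestriction $P_{V^{\bot}} \colon H_{\mathcal{A}} \to V^{\bot}$ is adjointable with adjoint equal to the inclusion $J_{V^{\bot}} \colon V^{\bot} \hookrightarrow H_{\mathcal{A}}$. In particular $(P_{V^{\bot}} F)^{*} = F^{*} J_{V^{\bot}} \in B^{a}(V^{\bot}, H_{\mathcal{A}})$.

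Next, I would apply Corollary 2.12 to the operator $G := F^{*} J_{V^{\bot}} \in B^{a}(V^{\bot}, H_{\mathcal{A}})$, whose adjoint is $G^{*} = J_{V^{\bot}}^{*} F = P_{V^{\bot}} F$: by hypothesis $G^{*} = P_{V^{\bot}} F \in \mathcal{M}\Phi_{-}(H_{\mathcal{A}}, V^{\bot})$, so Corollary 2.12 gives $G = F^{*} J_{V^{\bot}} \in \mathcal{M}\Phi_{+}(V^{\bot}, H_{\mathcal{A}})$. Now $V^{\bot}$ is a closed submodule of $H_{\mathcal{A}}$ with $H_{\mathcal{A}} = V^{\bot} \tilde\oplus V$ and $V$ finitely generated, so Lemma 2.13 applies with its $M$ taken to be $V^{\bot}$, its $N$ taken to be $V$, and its operator taken to be $F^{*}$ (which is again in $B^{a}(H_{\mathcal{A}})$); since $F^{*} J_{V^{\bot}} \in \mathcal{M}\Phi_{+}(V^{\bot}, H_{\mathcal{A}})$ this yields $F^{*} \in \mathcal{M}\Phi_{+}(H_{\mathcal{A}})$.

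Finally, one more application of Corollary 2.12, now with the operator $F^{*} \in B^{a}(H_{\mathcal{A}})$, turns $F^{*} \in \mathcal{M}\Phi_{+}(H_{\mathcal{A}})$ into $F = (F^{*})^{*} \in \mathcal{M}\Phi_{-}(H_{\mathcal{A}})$, which is the assertion. The only steps needing a little care, and the only plausible obstacle, are the identification of the adjoint of the corestricted projection $P_{V^{\bot}}$ and the correct matching of modules when invoking Lemma 2.13 and Corollary 2.12; the real content has already been supplied by Lemma 2.13. As a self-contained alternative one could instead argue directly: starting from an $\mathcal{M}\Phi_{-}$ decomposition of $P_{V^{\bot}} F$, show that $F$ is bounded below on the corresponding submodule $M_{1}$ (using $\|P_{V^{\bot}}\| \le 1$), deduce that $F(M_{1})$ is closed and orthogonally complementable after replacing $M_{1}$ by an orthogonally complementable submodule via \cite[Theorem 2.7.6]{MT}, and then diagonalize as in the proofs of Theorems 2.2 and 2.3 to exhibit an $\mathcal{M}\Phi_{-}$ decomposition for $F$; the adjoint argument above is however considerably shorter.
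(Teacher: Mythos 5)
Your argument is exactly the paper's proof: pass to adjoints, use the duality between $\mathcal{M}\Phi_{+}$ and $\mathcal{M}\Phi_{-}$ (the paper's Corollary 2.11) to turn the hypothesis into $F^{*}J_{V^{\bot}}\in\mathcal{M}\Phi_{+}(V^{\bot},H_{\mathcal{A}})$, apply the inclusion-map lemma (the paper's Lemma 2.12, with $M=V^{\bot}$, $N=V$) to get $F^{*}\in\mathcal{M}\Phi_{+}(H_{\mathcal{A}})$, and dualize back. The only slip is bookkeeping: what you cite as ``Corollary 2.12'' and ``Lemma 2.13'' are in the paper Corollary 2.11 and Lemma 2.12, respectively; the mathematics is correct as written.
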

\begin{proof}
Passing to the adjoints and using  Lemma 2.12 together with  Corollary 2.11, one obtains the result. 
\end{proof}
\begin{corollary}
Let $D, F \in B^{a}(H_{\mathcal{A}})$ and suppose that ${\rm ran}D^{*} $ is closed. If $DF \in \mathcal{M}\Phi_{-} (H_{\mathcal{A}})$ , then
$P_{ \ker (D)^{\bot}}F \in \mathcal{M}\Phi_{-} (H_{\mathcal{A}},{\rm ran}(D^{*}))$
\end{corollary}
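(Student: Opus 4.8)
The plan is to dualise Lemma 2.14 by passing to adjoints, in the same spirit in which Corollary 2.13 was obtained from Lemma 2.12 and Corollary 2.11. The starting observation is that $DF \in \mathcal{M}\Phi_{-}(H_{\mathcal{A}})$ translates, via Corollary 2.11, into $(DF)^{*} = F^{*}D^{*} \in \mathcal{M}\Phi_{+}(H_{\mathcal{A}})$, and that ${\rm ran}\, D^{*}$ closed puts us exactly in the situation of Lemma 2.14.

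First I would apply Lemma 2.14 with the first operator there taken to be $F^{*}$ and the second to be $D^{*}$: the product is $F^{*}D^{*} \in \mathcal{M}\Phi_{+}(H_{\mathcal{A}})$ and $\mathrm{ran}\, D^{*}$ is closed by hypothesis, so the lemma yields $F^{*} J_{{\rm ran}\, D^{*}} \in \mathcal{M}\Phi_{+}({\rm ran}\, D^{*}, H_{\mathcal{A}})$, where $J_{{\rm ran}\, D^{*}}$ is the inclusion of ${\rm ran}\, D^{*}$ into $H_{\mathcal{A}}$ (which makes sense because ${\rm ran}\, D^{*}$ is closed and hence orthogonally complementable in $H_{\mathcal{A}}$ by \cite[ Theorem 2.3.3] {MT}).

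Next I would pass back to adjoints. Since ${\rm ran}\, D^{*}$ is orthogonally complementable, $J_{{\rm ran}\, D^{*}}$ is adjointable with $J_{{\rm ran}\, D^{*}}^{*} = P_{{\rm ran}\, D^{*}}$, the orthogonal projection onto ${\rm ran}\, D^{*}$. Moreover $\ker D = ({\rm ran}\, D^{*})^{\bot}$ for any adjointable $D$, and since ${\rm ran}\, D^{*}$ is orthogonally complementable one gets ${\rm ran}\, D^{*} = ({\rm ran}\, D^{*})^{\bot\bot} = \ker(D)^{\bot}$, so $P_{{\rm ran}\, D^{*}} = P_{\ker(D)^{\bot}}$. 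Hence $(F^{*} J_{{\rm ran}\, D^{*}})^{*} = J_{{\rm ran}\, D^{*}}^{*} F = P_{\ker(D)^{\bot}} F$, regarded as an operator from $H_{\mathcal{A}}$ into ${\rm ran}(D^{*})$. Applying Corollary 2.11 once more, now in its $B^{a}(M,N)$ form, the membership $F^{*} J_{{\rm ran}\, D^{*}} \in \mathcal{M}\Phi_{+}({\rm ran}\, D^{*}, H_{\mathcal{A}})$ is equivalent to $P_{\ker(D)^{\bot}} F \in \mathcal{M}\Phi_{-}(H_{\mathcal{A}}, {\rm ran}(D^{*}))$, which is precisely the assertion.

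I expect the only delicate point to be the bookkeeping of domains and codomains, together with the identification ${\rm ran}\, D^{*} = \ker(D)^{\bot}$ (which genuinely uses that ${\rm ran}\, D^{*}$ is closed, so that \cite[ Theorem 2.3.3] {MT} supplies orthogonal complementability and the double-perp identity applies). Once these identifications are fixed, the proof is a straightforward chain of two applications of Corollary 2.11 around one application of Lemma 2.14, with no further computation required.
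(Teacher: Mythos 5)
Your proposal is correct and is essentially the paper's own argument: the paper likewise identifies $\ker(D)^{\perp}={\rm ran}(D^{*})$ from the closedness of ${\rm ran}\,D^{*}$ and then "passes to adjoints, using Lemma 2.13 together with Corollary 2.11" (the lemma you cite as 2.14 is the paper's Lemma 2.13 on $DJ_{{\rm ran}F}$), exactly the chain $DF\in\mathcal{M}\Phi_{-}\Rightarrow F^{*}D^{*}\in\mathcal{M}\Phi_{+}\Rightarrow F^{*}J_{{\rm ran}D^{*}}\in\mathcal{M}\Phi_{+}\Rightarrow P_{\ker(D)^{\perp}}F\in\mathcal{M}\Phi_{-}(H_{\mathcal{A}},{\rm ran}(D^{*}))$ that you spell out. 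Your write-up just makes explicit the bookkeeping ($J_{{\rm ran}D^{*}}^{*}=P_{{\rm ran}D^{*}}=P_{\ker(D)^{\perp}}$) that the paper leaves implicit.
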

\begin{proof}
Observe that since ${\rm ran}(D^{*})$ is closed, then by the proof of \cite[ Theorem 2.3.3] {MT}, we have that $H_{\mathcal{A}}= \ker (D) \oplus {\rm ran}(D^{*}) .$ Hence $ \ker (D)^{\perp}={\rm ran}(D^{*}) .$ Passing to the adjoints and using  Lemma 2.13 together with  Corollary 2.11, one deduces the corollary.
\end{proof}
\begin{lemma}
Let $F \in \mathcal{M}\Phi(H_{\mathcal{A}}) $ and suppose that there are two decompositions
$$H_{\mathcal{A}} = M_{1} \tilde \oplus {N_{1}}‎‎\stackrel{F}{\longrightarrow} M_{2} \tilde \oplus N_{2}= H_{\mathcal{A}} $$
$$H_{\mathcal{A}} = M_{1}^{\prime} \tilde \oplus {N_{1}^{\prime}}‎‎\stackrel{F}{\longrightarrow} M_{2}^{\prime} \tilde \oplus N_{2}^{\prime}= H_{\mathcal{A}} $$
with respect to which F has matrices
\begin{center}
	$\left\lbrack
	\begin{array}{ll}
	F_{1} & 0 \\
	0 & F_{4} \\
	\end{array}
	\right \rbrack
	, $
	$\left\lbrack
	\begin{array}{ll}
	F_{1}^{\prime} & 0 \\
	0 & F_{4}^{\prime} \\
	\end{array}
	\right \rbrack,
	 $
\end{center}
respectively, where $F_{1},{F_{1}}^{\prime}$ are isomorphisms, $N_{1},{N_{1}}^{\prime},N_{2}$ are closed, finitely generated and ${N_{2}}^{\prime}$ is just closed. Then ${N_{2}}^{\prime}$ is finitely generated also.
\end{lemma}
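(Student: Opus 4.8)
The plan is to argue entirely at the level of abstract $\mathcal{A}$-modules, without ever invoking closedness of ranges or orthogonal complementability. This is forced on us: ranges of $\mathcal{A}$-Fredholm operators need not be closed, and neither need the submodules $F(N_1)$, $F(N_1')$ that will appear below. The whole point is that each of the two given decompositions computes the algebraic quotient $H_{\mathcal{A}}/{\rm ran}F$, and comparing the two answers will pin down $N_2'$.

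First I would record the shape of ${\rm ran}F$ coming from each decomposition. From the block-diagonal form with respect to the first decomposition, $F(M_1)=F_1(M_1)=M_2$ (since $F_1$ is an isomorphism onto $M_2$) and $F(N_1)=F_4(N_1)\subseteq N_2$. Setting $V:=F(N_1)$, this gives ${\rm ran}F=F(M_1)+F(N_1)=M_2+V$ with $V\subseteq N_2$; moreover $V$ is finitely generated, being the $\mathcal{A}$-linear span of the (finitely many) images under $F$ of a generating set of $N_1$. Applying the identical reasoning to the second decomposition, with $V':=F(N_1')$, I get ${\rm ran}F=M_2'+V'$ where $V'\subseteq N_2'$ is finitely generated. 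Next I would compute $H_{\mathcal{A}}/{\rm ran}F$ twice: using $H_{\mathcal{A}}=M_2\,\tilde{\oplus}\,N_2$, the map sending $m+n$ (with $m\in M_2$, $n\in N_2$) to $n+V$ is a well-defined surjective $\mathcal{A}$-module homomorphism $H_{\mathcal{A}}\to N_2/V$ with kernel $M_2+V={\rm ran}F$, so $H_{\mathcal{A}}/{\rm ran}F\cong N_2/V$; and the same construction with the second decomposition gives $H_{\mathcal{A}}/{\rm ran}F\cong N_2'/V'$. Hence $N_2'/V'\cong N_2/V$, and since $N_2$ is finitely generated, so is its quotient $N_2/V$, and therefore $N_2'/V'$ is finitely generated.

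Finally I would finish by lifting. Choose finitely many $w_1,\dots,w_m\in N_2'$ whose images generate $N_2'/V'$, and a finite generating set $v_1,\dots,v_\ell$ of $V'$. For $y\in N_2'$, write $y+V'=\sum_i a_i(w_i+V')$ with $a_i\in\mathcal{A}$; then $y-\sum_i a_iw_i\in V'$, so $y-\sum_i a_iw_i=\sum_j b_jv_j$ for some $b_j\in\mathcal{A}$, and thus $y=\sum_i a_iw_i+\sum_j b_jv_j$. So $N_2'$ is the $\mathcal{A}$-linear span of $\{w_1,\dots,w_m,v_1,\dots,v_\ell\}$, i.e. $N_2'$ is finitely generated.

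There is no serious obstacle in this argument; the only thing to watch is the temptation to imitate the Hilbert-space proof using orthogonal complements and closed ranges. Here ${\rm ran}F$, $V$ and $V'$ may fail to be closed, so every identification above must be read as an isomorphism of abstract $\mathcal{A}$-modules, and no $K$-theoretic cancellation is needed. I would also note in passing that the hypothesis $F\in\mathcal{M}\Phi(H_{\mathcal{A}})$ enters only through the first decomposition — which is an $\mathcal{M}\Phi$ decomposition and so already forces $F\in\mathcal{M}\Phi(H_{\mathcal{A}})$ on its own.
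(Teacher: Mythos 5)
Your argument is correct, and it is genuinely different from the one in the paper. You observe that each block-diagonal decomposition computes the same abstract quotient: since $F_1$ is an isomorphism, $F(M_1)=M_2$ and ${\rm ran}F=M_2\oplus F(N_1)$ with $F(N_1)\subseteq N_2$, so $H_{\mathcal{A}}/{\rm ran}F\cong N_2/F(N_1)$ as abstract $\mathcal{A}$-modules, and likewise $\cong N_2'/F(N_1')$; finite generation (in the paper's algebraic sense of span over $\mathbf{C}$ and $\mathcal{A}$) passes to quotients, across abstract isomorphisms, and through the extension $0\to F(N_1')\to N_2'\to N_2'/F(N_1')\to 0$, which gives the conclusion. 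All steps check out, and you are right that neither closedness of ${\rm ran}F$, $F(N_1)$, $F(N_1')$ nor any $K$-theoretic cancellation is needed, and that the hypothesis $F\in\mathcal{M}\Phi(H_{\mathcal{A}})$ is already encoded in the first decomposition. The paper proceeds quite differently, staying inside the Mishchenko--Fomenko machinery: it uses \cite[Theorem 2.7.5]{MT} to place $N_1$ and (a copy of) $N_1'$ inside some $L_n$, rewrites both decompositions via the diagonalization trick of \cite[Lemma 2.7.11]{MT}, and arrives at an isomorphism $F(P)\,\tilde\oplus\,V^{-1}(N_2)\cong F(P')\,\tilde\oplus\,{V'}^{-1}(N_2')$ of closed, complementable submodules, from which finite generation of $N_2'$ is read off. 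What the paper's longer route buys is precisely this by-product: the isomorphism of the two ``stabilized'' modules is what Corollaries 2.17 and 2.18 extract from the proof (and what is reused later, e.g.\ in Theorem 4.1 and Proposition 5.7), whereas your quotient argument, while shorter and more elementary, yields only an isomorphism of abstract (possibly non-closed) quotients and hence the stated conclusion but not that refinement.
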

\begin{proof} Since $N_{1},{N_{1}}^{\prime}$ are finitely generated, by \cite[ Theorem 2.7.5] {MT}, there exist an $n$ such that\\
\begin{center}
	$L_{n}=P \tilde \oplus p_{n}(N_{1}) , P=M_{1}\cap L_{n} , p_{n}(N_{1}) \cong N_{1} $ and\\
\end{center}
$L_{n}=P^{\prime} \tilde \oplus p_{n}(N_{1}^{\prime})$ , $P^{\prime}=M_{1}^{\prime}\cap L_{n}$ , $p_{n}(N_{1}^{\prime}) \cong N_{1}^{\prime} $\\
Then $$H_{\mathcal{A}} = L_{n}^{\bot} \tilde \oplus P \tilde \oplus N_{1}=L_{n}^{\bot} \tilde \oplus P^{\prime} \tilde \oplus N_{1}^{\prime} ,$$\\
consequently $\sqcap_{{M_{1}}_{{\mid}_{(L_{n}^{\bot} \tilde \oplus P)}}} $ , 
$\sqcap_{{M_{1}}_{{\mid}_{(L_{n}^{\bot} \tilde \oplus P^{\prime})}}^{\prime}} $ are isomorphisms from $L_{n}^{\bot} \tilde \oplus P$ onto $M_{1}$ and from $L_{n}^{\bot} \tilde \oplus P^{\prime}$ onto $M_{1}^{\prime}$ respectively, where $\sqcap_{{M_{1}}_{{\mid}_{(L_{n}^{\bot} \tilde \oplus P)}}} $ , $\sqcap_{{M_{1}}_{{\mid}_{(L_{n}^{\bot} \tilde \oplus P^{\prime})}}^{\prime}} $
denote  the restrictions of projections onto $M_{1}$ and $M_{1}^{\prime}$ along $N_{1}$ and $N_{1}^{\prime}$ restricted to  $L_{n}^{\perp} \tilde \oplus P $  and $L_{n}^{\bot} \tilde \oplus P^{\prime} $ respectively. Since $F(M_{1})=M_{2}$ and $F(N_{1})\in N_{2}$ and $H_{\mathcal{A}}= M_{1} \tilde{\oplus} N_{1}$ it follows that 
$$\sqcap_{M_{2}} F_{{\mid}_{(L_{n}^{\bot} \tilde \oplus P)}}=F \sqcap_{{M_{1}}_{{\mid}_{(L_{n}^{\bot} \tilde \oplus P)}}}=F_{1} \sqcap_{{M_{1}}_{{\mid}_{(L_{n}^{\bot} \tilde \oplus P^{\prime})}}^{\prime}} $$ where $\sqcap_{M_{2}}$ denotes the projection onto $M_{2}$ along $N_{2}$. Hence $\sqcap_{M_{2}} F_{{\mid}_{(L_{n}^{\bot} \tilde \oplus P)}}$ is an isomorphism as $\sqcap_{{M_{1}}_{{\mid}_(L_{n}^{\bot} \tilde \oplus P)}}$ and $F_{1}$ are so. Similarly, $\sqcap_{M_{2}^{\prime}} F_{{\mid}_{(L_{n}^{\bot} \tilde \oplus P^{\prime})}}$ is an isomorphism, where $\sqcap_{M_{2}^{\prime}}$ denotes the projection onto $M_{2}^{\prime}$ along $N_{2}^{\prime}$. \\
We get then that $F$ has the matrices
\begin{center}
	$\left\lbrack
	\begin{array}{ll}
	\tilde{F_{1}} & 0 \\
	\tilde F_{3} & F_{4} \\
	\end{array}
	\right \rbrack 
	$,
	$\left\lbrack
	\begin{array}{ll}
	\tilde{F_{1}^{\prime}} & 0 \\
	\tilde F_{3}^{\prime} & F_{4}^{\prime} \\
	\end{array}
	\right \rbrack 
	$ with respect to the decompositions
\end{center}
$$H_{\mathcal{A}} = (L_{n}^{\bot} \tilde \oplus P) \tilde \oplus {N_{1}}‎‎\stackrel{F}{\longrightarrow} M_{2} \tilde \oplus N_{2}= H_{\mathcal{A}} $$
$$H_{\mathcal{A}} = (L_{n}^{\bot} \tilde \oplus P^{\prime}) \tilde \oplus {N_{1}^{\prime}}‎‎\stackrel{F}{\longrightarrow} M_{2}^{\prime} \tilde \oplus N_{2}= H_{\mathcal{A}},$$
respectively, where $\tilde{F_{1}}= \sqcap_{M_{2}} F_{{\mid}_{(L_{n}^{\bot} \tilde \oplus P)}}$ , $\tilde {F_{1}^{\prime}}=\sqcap_{M_{2}^{\prime}} F_{{\mid}_{(L_{n}^{\bot} \tilde \oplus P^{\prime})}}$ are isomorphisms. As in the proof of \cite[ Lemma 2.7.11] {MT}, we let
\begin{center}
	$V=	\left\lbrack
	\begin{array}{ll}
	1 & 0 \\
	- \tilde{F_{3}}\tilde{F_{1}}^{-1} & 1 \\
	\end{array}
	\right \rbrack
	,$
	$V^{\prime}=	\left\lbrack
	\begin{array}{ll}
	1 & 0 \\
	- \tilde{F^{\prime}_{3}}\tilde{F^{\prime}_{1}}^{-1} & 1 \\
	\end{array}
	\right \rbrack
	,$
\end{center}
with respect to the decomposition 
$$H_{\mathcal{A}}= M_{2} \tilde \oplus {N_{2}}‎‎\stackrel{V}{\longrightarrow} M_{2} \tilde \oplus N_{2}= H_{\mathcal{A}} $$
$$H_{\mathcal{A}}= M_{2}^{\prime} \tilde \oplus {N_{2}^{\prime}}‎‎\stackrel{V}{\longrightarrow} M_{2}^{\prime} \tilde \oplus N_{2}^{\prime}= H_{\mathcal{A}}. $$

Then $F$ has the matrices 
\begin{center}
	$	\left\lbrack
	\begin{array}{ll}
	\tilde {\tilde {F_{1}}} & 0 \\
	0 & \tilde {\tilde {F_{4}}} \\
	\end{array}
	\right \rbrack
	$,
	$	\left\lbrack
	\begin{array}{ll}
	\tilde {\tilde {F_{1}}}^{\prime} & 0 \\
	0 & \tilde {\tilde {F_{4}}}^{\prime} \\
	\end{array}
	\right \rbrack
	$
\end{center}
with respect to the decompositions
$$H_{\mathcal{A}}=(L_{n}^{\bot} \tilde \oplus P)\tilde \oplus {N_{1}}‎‎\stackrel{F}{\longrightarrow} V^{-1}(M_{2}) \tilde \oplus V^{-1}(N_{2})=H_{\mathcal{A}}$$
$$H_{\mathcal{A}}=(L_{n}^{\bot} \tilde \oplus P^{\prime})\tilde \oplus {N_{1}^{\prime}}‎‎\stackrel{F}{\longrightarrow} {V^{\prime}}^{-1}(M_{2}^{\prime}) \tilde \oplus {V^{\prime}}^{-1}(N_{2}^{\prime})=H_{\mathcal{A}},$$ 
respectively, where $\tilde {\tilde {F_{1}}}, \tilde {\tilde {F_{1}}}^{\prime}$ are isomorphism. Again, as in the proof of \cite[ Lemma 2.7.11] {MT}, we change these decompositions into
$$H_{\mathcal{A}}=L_{n}^{\bot} \tilde \oplus (P\tilde \oplus N_{1})‎‎\stackrel{F}{\longrightarrow} F(L_{n}^{\bot}) \tilde \oplus(F(P)\tilde \oplus V^{-1}(N_{2}))=H_{\mathcal{A}}$$
$$H_{\mathcal{A}}=L_{n}^{\bot} \tilde \oplus (P^{\prime}\tilde \oplus N_{1}^{\prime})‎‎\stackrel{F}{\longrightarrow} F(L_{n}^{\bot}) \tilde \oplus(F(P^{\prime})\tilde \oplus {V^{\prime}}^{-1}(N_{2}^{\prime}))=H_{\mathcal{A}}$$
and with respect to these decompositions $F$ has matrices
\begin{center}
	$	\left\lbrack
	\begin{array}{ll}
	\tilde {\tilde {F_{1}}} & 0 \\
	0 & \tilde {\tilde {F_{4}}} \\
	\end{array}
	\right \rbrack
	$,
	$	\left\lbrack
	\begin{array}{ll}
	\tilde {\tilde {F_{1}}}^{\prime} & 0 \\
	0 & \tilde {\tilde {F_{4}}}^{\prime} \\
	\end{array}
	\right \rbrack,
	$ respectively, where $\tilde {\tilde {F_{1}}} , \tilde {\tilde {F_{1}}}^{\prime}$ are isomorphisms. 
\end{center}
As $$H_{\mathcal{A}}=F(L_{n}^{\bot}) \tilde \oplus (F(P)\tilde \oplus V^{-1}(N_{2})) = F(L_{n}^{\bot}) \tilde \oplus(F(P^{\prime})\tilde \oplus {V^{\prime}}^{-1}(N_{2}^{\prime}))=H_{\mathcal{A}}$$
clearly we have
$$(F(P)\tilde \oplus V^{-1}(N_{2}))\cong (F(P^{\prime})\tilde \oplus {V^{\prime}}^{-1}(N_{2}^{\prime})).$$ 
Now, $F(P)$ and $ V^{-1}(N_{2})$ are finitely generated since $F_{{\mid}_P}$ , $V^{-1}$ are isomorphisms and $P,N_{2}$ are finitely generated. Hence $(F(P)\tilde \oplus V^{-1}(N_{2}))$ is finitely generated, consequently $(F(P^{\prime})\tilde \oplus {V^{\prime}}^{-1}(N_{2}^{\prime}))$ 
is finitely generated being isomorphic to a finitely generated submodule $(F(P)\tilde \oplus V^{-1}(N_{2}))$. Therefore, ${V^{\prime}}^{-1}(N_{2}^{\prime})$ is finitely generated, since it is generated by the images of the generators of $F(P^{\prime})\tilde \oplus {V^{\prime}}^{-1}(N_{2}^{\prime})$ under the projection onto ${V^{\prime}}^{-1}(N_{2}^{\prime})$ along $F(P^{\prime})$. But $V^{\prime}$ is an isomorphism, hence $N_{2}^{\prime}$ must be finitely generated.
\end{proof}
\begin{lemma}
Let $F \in \mathcal{M}\Phi (H_{\mathcal{A}})$ and let $$H_{\mathcal{A}} = M_{1} \tilde \oplus {N_{1}}‎‎\stackrel{F}{\longrightarrow} M_{2} \tilde \oplus N_{2}= H_{\mathcal{A}} $$ 
be a decomposition with respect to which F has the matrix
\begin{center}
	$\left\lbrack
	\begin{array}{ll}
	F_{1} & 0 \\
	0 & F_{4} \\
	\end{array}
	\right \rbrack,
	$
\end{center}
where $F_{1}$ is an isomorphism, $N_{2}$ is finitely generated and $N_{1}$ is just closed. Then $N_{1}$ is finitely generated.
\end{lemma}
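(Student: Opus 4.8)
The plan is to reduce the statement to Lemma 2.16 applied to the adjoint $F^{*}$, after first upgrading the given decomposition to an orthogonal one. First I would show that $N_{1}$ is orthogonally complementable. Since $N_{2}$ is finitely generated it is orthogonally complementable by \cite[Lemma 2.3.7]{MT}, so $H_{\mathcal{A}} = N_{2}^{\perp} \oplus N_{2}$ and $P_{N_{2}^{\perp}}|_{M_{2}} : M_{2} \to N_{2}^{\perp}$ is an isomorphism. Setting $G = P_{N_{2}^{\perp}} F \in B^{a}(H_{\mathcal{A}})$ and using that $F$ has the displayed diagonal matrix, i.e.\ $F(M_{1}) = M_{2}$ and $F(N_{1}) \subseteq N_{2}$, one checks that $\ker G = N_{1}$ and ${\rm ran}\, G = P_{N_{2}^{\perp}}(M_{2}) = N_{2}^{\perp}$, which is closed; hence by \cite[Theorem 2.3.3]{MT} the module $N_{1} = \ker G$ is orthogonally complementable, $H_{\mathcal{A}} = N_{1}^{\perp} \oplus N_{1}$. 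Since $G|_{N_{1}^{\perp}} : N_{1}^{\perp} \to N_{2}^{\perp}$ is then an isomorphism, $F|_{N_{1}^{\perp}}$ is bounded below, $F(N_{1}^{\perp})$ is closed, $F|_{N_{1}^{\perp}}$ is an isomorphism onto $F(N_{1}^{\perp})$, and by \cite[Theorem 2.3.3]{MT} again $F(N_{1}^{\perp})$ is orthogonally complementable. Finally $H_{\mathcal{A}} = F(N_{1}^{\perp}) \tilde{\oplus} N_{2}$: the intersection is $0$ because $G|_{N_{1}^{\perp}}$ is injective and the sum is all of $H_{\mathcal{A}}$ because $G|_{N_{1}^{\perp}}$ maps onto $N_{2}^{\perp}$, and both summands being closed the sum is topological; projecting along $F(N_{1}^{\perp})$ therefore identifies $F(N_{1}^{\perp})^{\perp}$ with $N_{2}$, whence $F(N_{1}^{\perp})^{\perp}$ is finitely generated.

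Next I would diagonalise $F$ with respect to the orthogonal decompositions $H_{\mathcal{A}} = N_{1}^{\perp} \oplus N_{1}$ and $H_{\mathcal{A}} = F(N_{1}^{\perp}) \oplus F(N_{1}^{\perp})^{\perp}$, exactly as in the proof of Corollary 2.11 and of \cite[Lemma 2.7.10]{MT}, obtaining a decomposition $$H_{\mathcal{A}} = N_{1}^{\perp} \tilde{\oplus} \tilde{N}_{1} \stackrel{F}{\longrightarrow} F(N_{1}^{\perp}) \oplus F(N_{1}^{\perp})^{\perp} = H_{\mathcal{A}}$$ with respect to which $F$ has a diagonal matrix whose upper left corner is an isomorphism, with $\tilde{N}_{1} \cong N_{1}$ and codomain defect $F(N_{1}^{\perp})^{\perp}$ finitely generated. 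Passing to the adjoint as in the proof of Corollary 2.11 then yields a decomposition $$H_{\mathcal{A}} = F(N_{1}^{\perp}) \oplus F(N_{1}^{\perp})^{\perp} \stackrel{F^{*}}{\longrightarrow} \tilde{N}_{1}^{\perp} \tilde{\oplus} N_{1} = H_{\mathcal{A}}$$ with respect to which $F^{*}$ has a diagonal matrix whose upper left corner is an isomorphism, having \emph{domain} defect $F(N_{1}^{\perp})^{\perp}$ (finitely generated) and \emph{codomain} defect $N_{1}$ (so far only known to be closed).

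Now, since $F \in \mathcal{M}\Phi(H_{\mathcal{A}})$, Corollary 2.11 gives $F^{*} \in \mathcal{M}\Phi(H_{\mathcal{A}})$, so by definition $F^{*}$ also admits a decomposition in which both defects are finitely generated. I would then apply Lemma 2.16 to $F^{*}$, taking this fully finitely generated decomposition as the first (unprimed) one and the decomposition constructed above as the second one, whose codomain defect $N_{1}$ is a priori only closed. Lemma 2.16 then gives that $N_{1}$ is finitely generated, which is the assertion.

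The matrix manipulations in the two diagonalisations and the identifications of kernels, ranges and defects are routine, mirroring steps already carried out in the proofs of Corollary 2.11 and Lemma 2.16; the step that needs real care is the passage to the adjoint in the second paragraph, where one must verify that the finitely generated defect and the merely closed defect genuinely exchange sides, so that the decomposition of $F^{*}$ is precisely of the form to which Lemma 2.16 applies. As in Lemma 2.16, the decisive structural fact is that every $\mathcal{A}$-Fredholm operator --- here $F^{*}$ --- admits \emph{some} decomposition with both defects finitely generated, which is what provides the second decomposition needed in the invocation of Lemma 2.16.
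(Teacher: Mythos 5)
Your proposal is correct and follows essentially the same route as the paper: show $N_{1}$ and $N_{2}$ are orthogonally complementable (via the compression $P_{N_{2}^{\perp}}F$, exactly as in the proof of Theorem 2.3), pass to $F^{*}\in\mathcal{M}\Phi(H_{\mathcal{A}})$ using Corollary 2.11, diagonalise the triangular matrix of $F^{*}$, and invoke Lemma 2.16 with the finitely generated defect on the domain side and $N_{1}$ as the merely closed codomain defect. The only difference is cosmetic: the paper works with the codomain decomposition $N_{2}^{\perp}\oplus N_{2}$, so that the finitely generated domain defect of $F^{*}$ is an isomorphic copy of $N_{2}$ immediately, whereas you use $F(N_{1}^{\perp})\oplus F(N_{1}^{\perp})^{\perp}$ and supply the extra (correct) check that $F(N_{1}^{\perp})^{\perp}\cong N_{2}$.
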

\begin{proof}
By  Corollary 2.11, we have $F^{*} \in \mathcal{M}\Phi(H_{\mathcal{A}})$ since $F \in \mathcal{M}\Phi (H_{\mathcal{A}})$ and moreover by the proof of Theorem 2.3, $N_{1},N_{2}$ are orthogonally complementable. With respect to the decomposition 
$$H_{\mathcal{A}} = {N_{1}}^{\bot} \tilde \oplus {N_{1}}‎‎\stackrel{F}{\longrightarrow} {N_{2}}^{\bot} \tilde \oplus N_{2}=H_{\mathcal{A}}$$ 
$F$ has the matrix
\begin{center}
	$	\left\lbrack
	\begin{array}{ll}
	{\tilde F_{1}} & 0 \\
	{\tilde F_{3}} & { F_{4}} \\
	\end{array}
	\right \rbrack,
	$
\end{center}
where $\tilde F_{1}$ is an isomorphism and hence with respect to the decomposition $$H_{\mathcal{A}} = {N_{2}}^{\bot} \tilde \oplus {N_{2}}‎‎\stackrel{F^{*}}{\longrightarrow} {N_{1}}^{\bot} \tilde \oplus N_{1}=H_{\mathcal{A}}$$
$F^{*}$ has the matrix
\begin{center}
	$\left\lbrack
	\begin{array}{ll}
	\tilde F_{1}^{*} &\tilde F_{3}^{*} \\
	0 &\tilde F_{4}^{*} \\
	\end{array}
	\right \rbrack .
	$
\end{center} 
Clearly, $\tilde F_{1}^{*}$ is an isomorphism as $\tilde F_{1}$ is so. Then
$F^{*}$ has the matrix
\begin{center}
	$\left\lbrack
	\begin{array}{ll}
	\tilde F_{1}^{*} &0 \\
	0 & F_{4}^{*} \\
	\end{array}
	\right \rbrack
	$ with respect to the decomposition
\end{center}
$$H_{\mathcal{A}} = U({N_{2}}^{\bot}) \tilde \oplus U( N_{2})‎‎\stackrel{F^{*}}{\longrightarrow} {N_{1}}^{\bot} \tilde \oplus N_{1}=H_{\mathcal{A}},$$
where $U$ is an isomorphism. But since $F^{*} \in \mathcal{M}\Phi(H_{\mathcal{A}})$, $\tilde F_{1}^{*}$ is an isomorphism and $U({N_{2}})$ is finitely generated (as $N_{2}$ is finitely generated by assumption ), we can use the previous lemma to deduce that $N_{1}$ is finitely generated. \end{proof}
\begin{corollary}
Let $ F \in \mathcal{M}\Phi_{+}(H_{\mathcal{A}}) $ and let 
$$ H_{\mathcal{A}} = M_{1} \tilde \oplus {N_{1}}‎‎\stackrel{F}{\longrightarrow} M_{2} \tilde \oplus N_{2}= H_{\mathcal{A}} $$ 
$$ H_{\mathcal{A}} = \tilde{M_{1}} \tilde \oplus {\tilde{N_{1}}}‎‎\stackrel{F}{\longrightarrow} \tilde M_{2} \tilde \oplus \tilde N_{2}= H_{\mathcal{A}} $$ 
be two $ \mathcal{M}\Phi_{+} $ decompositions for $F$. Then there exists some finitely generated submodules $ P $ and $ \tilde P .$ s.t. $(N_{2} \tilde \oplus P) \cong (\tilde{N_{2}} \tilde \oplus \tilde P). $
\end{corollary}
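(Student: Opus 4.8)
The plan is to re‑run the proof of Lemma 2.16 almost verbatim, but to stop it one step earlier. In that lemma the finite generation of $N_{2}$ is only used at the very last step (to transfer finite generation to $N_{2}^{\prime}$); everything before that point goes through for two arbitrary $\mathcal{M}\Phi_{+}$ decompositions, and it already exhibits $H_{\mathcal{A}}$ as a direct sum in two ways that differ only in a finitely generated‑plus‑$N_{2}$ summand versus a finitely generated‑plus‑$\tilde N_{2}$ summand. So I would follow Lemma 2.16's proof up to its penultimate display and then simply read off the isomorphism.

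Concretely, first I would use that $N_{1}$ and $\tilde N_{1}$ are finitely generated to invoke \cite[Theorem 2.7.5]{MT} exactly as in Lemma 2.16: this produces a single $n$ such that, with $L_{n}\subseteq H_{\mathcal{A}}$ the standard free rank‑$n$ submodule and $p_{n}$ the associated projection, $L_{n}=P_{0}\tilde\oplus p_{n}(N_{1})$ with $P_{0}=M_{1}\cap L_{n}$ and $p_{n}(N_{1})\cong N_{1}$, and \emph{simultaneously} $L_{n}=\tilde P_{0}\tilde\oplus p_{n}(\tilde N_{1})$ with $\tilde P_{0}=\tilde M_{1}\cap L_{n}$ and $p_{n}(\tilde N_{1})\cong \tilde N_{1}$. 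In particular $P_{0},\tilde P_{0}$ are finitely generated (they are images of $L_{n}$ under projections), and $H_{\mathcal{A}}=L_{n}^{\bot}\tilde\oplus P_{0}\tilde\oplus N_{1}=L_{n}^{\bot}\tilde\oplus \tilde P_{0}\tilde\oplus \tilde N_{1}$. Then, copying the manipulations of Lemma 2.16 (which use only that $F|_{M_{1}}$ and $F|_{\tilde M_{1}}$ are isomorphisms onto $M_{2}$ resp. $\tilde M_{2}$, that $F$ carries $N_{1},\tilde N_{1}$ into $N_{2},\tilde N_{2}$, and the finite generation of $P_{0},\tilde P_{0}$ — never that of $N_{2},\tilde N_{2}$), I would rewrite $F$ as a block operator with invertible $(1,1)$‑block and zero $(1,2)$‑block with respect to $(L_{n}^{\bot}\tilde\oplus P_{0})\tilde\oplus N_{1}\to M_{2}\tilde\oplus N_{2}$, and similarly for the tilde decomposition; then kill the $(2,1)$‑block by a lower‑triangular isomorphism of the target with identity diagonal blocks (hence one that is the identity on the second summand $N_{2}$), and regroup, reaching
\[ H_{\mathcal{A}}=F(L_{n}^{\bot})\tilde\oplus\bigl(F(P_{0})\tilde\oplus N_{2}\bigr)=F(L_{n}^{\bot})\tilde\oplus\bigl(F(\tilde P_{0})\tilde\oplus \tilde N_{2}\bigr). \]
Adjointability of the triangular isomorphisms is handled as in \cite[Lemma 2.7.11]{MT}, exactly as in Lemma 2.16.

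Since $L_{n}$ and $F$ are the same in both decompositions, the outer summand $F(L_{n}^{\bot})$ literally coincides on the two sides; hence the two inner summands are topological complements of one and the same submodule, and any two such complements are isomorphic (apply the projection onto one along $F(L_{n}^{\bot})$, restricted to the other), giving $F(P_{0})\tilde\oplus N_{2}\cong F(\tilde P_{0})\tilde\oplus \tilde N_{2}$. Finally I would set $P:=F(P_{0})\subseteq M_{2}$ and $\tilde P:=F(\tilde P_{0})\subseteq \tilde M_{2}$: these are finitely generated because $F|_{M_{1}}$ maps $P_{0}$ isomorphically onto $P$ and $F|_{\tilde M_{1}}$ maps $\tilde P_{0}$ isomorphically onto $\tilde P$, and $P\tilde\oplus N_{2}$ is a genuine internal topological direct sum because $P\subseteq M_{2}$ forces $P\cap N_{2}=0$ and makes $P+N_{2}$ closed in $H_{\mathcal{A}}$ (the addition map $P\oplus N_{2}\to H_{\mathcal{A}}$ is bounded below via the projections of $H_{\mathcal{A}}=M_{2}\tilde\oplus N_{2}$). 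This yields $N_{2}\tilde\oplus P\cong \tilde N_{2}\tilde\oplus \tilde P$. The main obstacle is purely organizational rather than mathematical: one must present the two $\mathcal{M}\Phi_{+}$ decompositions against a \emph{common} free submodule $L_{n}$ so that $F(L_{n}^{\bot})$ is literally the same submodule in both regrouped decompositions, and must be careful that the diagonalizing isomorphisms act trivially on the summands $N_{2},\tilde N_{2}$ so that those pass unchanged into the final isomorphism; once this is arranged, everything else is bookkeeping already carried out in the proof of Lemma 2.16.
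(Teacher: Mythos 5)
Your proposal is correct and is essentially the paper's own proof: the paper disposes of this corollary with the single line ``Statement follows from the proof of Lemma 2.16,'' i.e.\ exactly your plan of re-running that proof for two $\mathcal{M}\Phi_{+}$ decompositions and stopping at the penultimate display $H_{\mathcal{A}}=F(L_{n}^{\bot})\tilde\oplus\bigl(F(P)\tilde\oplus N_{2}\bigr)=F(L_{n}^{\bot})\tilde\oplus\bigl(F(\tilde P)\tilde\oplus\tilde N_{2}\bigr)$, where finite generation of $N_{2},\tilde N_{2}$ is never used and the two complements of the common summand $F(L_{n}^{\bot})$ are isomorphic. Your observations that the lower-triangular correction acts as the identity on $N_{2}$ (resp.\ $\tilde N_{2}$) and that $P=F(P_{0})$, $\tilde P=F(\tilde P_{0})$ are finitely generated match the paper's argument.
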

\begin{proof}
Statement follows from the proof of  Lemma 2.16.
\end{proof}
\begin{corollary}
Let $ D \in \mathcal{M}\Phi_{-}(H_{\mathcal{A}}) $ and let $$H_{\mathcal{A}} = M_{1}^{\prime} \tilde \oplus {N_{1}^{\prime}}‎‎\stackrel{D}{\longrightarrow} M_{2}^{\prime} \tilde \oplus N_{2}^{\prime}= H_{\mathcal{A}} $$
$$H_{\mathcal{A}} = \tilde M_{1}^{\prime} \tilde \oplus {\tilde{N_{1}}}‎‎\stackrel{D}{\longrightarrow} \tilde M_{2}^{\prime} \tilde \oplus \tilde N_{2}^{\prime}= H_{\mathcal{A}} $$
be two $ \mathcal{M}\Phi_{-} $ decompositions for $D.$ Then there exists some finitely generated,closed submodules $P^{\prime} $ and $ \tilde{P^{\prime}}$ s.t. $(N_{1}^{\prime} \tilde \oplus P^{\prime}) \cong (\tilde{N_{1}}^{\prime} \tilde \oplus \tilde P^{\prime}) .$
\end{corollary}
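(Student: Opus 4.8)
This is the lower semi-$\mathcal{A}$-Fredholm counterpart of Corollary 2.18, and the plan is to deduce it from that corollary by passing to adjoints, exactly in the spirit in which Lemma 2.17 was obtained from Lemma 2.16 and Corollary 2.14 from Corollary 2.13. So the first step is to use Corollary 2.11: since $D \in \mathcal{M}\Phi_{-}(H_{\mathcal{A}})$, we have $D^{*} \in \mathcal{M}\Phi_{+}(H_{\mathcal{A}})$. What I actually need is slightly more than the bare membership, namely the concrete form of the decomposition produced there, so I would revisit the proof of Corollary 2.11 (which invokes the proofs of Theorems 2.2 and 2.3).

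Starting from one of the two given $\mathcal{M}\Phi_{-}$ decompositions, say $H_{\mathcal{A}} = M_{1}^{\prime} \tilde\oplus N_{1}^{\prime} \stackrel{D}{\longrightarrow} M_{2}^{\prime} \tilde\oplus N_{2}^{\prime} = H_{\mathcal{A}}$ with $N_{2}^{\prime}$ finitely generated, one first replaces $M_{2}^{\prime}$ by $(N_{2}^{\prime})^{\bot}$ and $M_{1}^{\prime}$ by $(N_{1}^{\prime})^{\bot}$; this is legitimate because $N_{2}^{\prime}$ is orthogonally complemented by \cite[Lemma 2.3.7]{MT} and $N_{1}^{\prime}$ is orthogonally complemented as established inside the proof of Theorem 2.3 (it is the kernel of the adjointable closed-range operator $P_{(N_{2}^{\prime})^{\bot}} D$, so \cite[Theorem 2.3.3]{MT} applies). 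Passing to $D^{*}$ and diagonalizing as in \cite[Lemma 2.7.10]{MT}, one obtains an $\mathcal{M}\Phi_{+}$ decomposition of $D^{*}$ whose domain-side complement is isomorphic to $N_{2}^{\prime}$ (in particular finitely generated, as required) and whose range-side complement is isomorphic to $N_{1}^{\prime}$; here one uses that the orthogonalization and the diagonalization change each complement only up to isomorphism, and that any two complements of a fixed closed submodule of $H_{\mathcal{A}}$ (occurring in the relevant direct-sum decompositions) are isomorphic, the restriction of one projection to the other complement being an adjointable bijection.

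Carrying this out for both given $\mathcal{M}\Phi_{-}$ decompositions of $D$ yields two $\mathcal{M}\Phi_{+}$ decompositions of $D^{*}$ whose range-side complements are isomorphic to $N_{1}^{\prime}$ and $\tilde N_{1}^{\prime}$ respectively. Applying Corollary 2.18 to $D^{*}$ with these two decompositions produces finitely generated closed submodules $P^{\prime}$ and $\tilde P^{\prime}$ with $(N_{1}^{\prime} \tilde\oplus P^{\prime}) \cong (\tilde N_{1}^{\prime} \tilde\oplus \tilde P^{\prime})$, using that adjoining a fixed module to both sides of an isomorphism preserves it; this is precisely the claimed statement. The only delicate part of the argument is the bookkeeping in the adjoint passage — keeping track of which complement of $D$ corresponds to which complement of $D^{*}$, and checking that finite generation and isomorphism type are preserved under orthogonalization and diagonalization — but all of this is already contained in the proofs of Corollaries 2.11 and 2.18, so no new analytic content is needed and the proof is short.
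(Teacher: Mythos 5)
Your proposal is correct and follows essentially the same route as the paper: the paper's proof simply says the statement follows from the proof of Lemma 2.17, and that proof is exactly the adjoint passage you describe (orthogonalize the decompositions as in the proof of Theorem 2.3, pass to $D^{*}$, diagonalize as in \cite[Lemma 2.7.10]{MT}, and then invoke the argument of Lemma 2.16, which is what underlies Corollary 2.18). Your bookkeeping of which complements become finitely generated and which carry the isomorphism classes of $N_{1}^{\prime}$ and $\tilde N_{1}^{\prime}$ matches the paper's intended argument.
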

\begin{proof}	
Statement follows from the proof of  Lemma 2.17.
\end{proof}
\begin{lemma}
Let $F \in \mathcal{M}\Phi_{+}(H_{\mathcal{A}}) $ and suppose that ${\rm ran} F$ is closed.
If 
$$H_{\mathcal{A}} = M_{1} \tilde \oplus {N_{1}}‎‎\stackrel{F}{\longrightarrow} M_{2} \tilde \oplus N_{2}= H_{\mathcal{A}} $$ 
$$H_{\mathcal{A}} = M_{1}^{\prime} \tilde \oplus {N_{1}^{\prime}}‎‎\stackrel{F}{\longrightarrow} M_{2}^{\prime} \tilde \oplus N_{2}^{\prime}= H_{\mathcal{A}} $$
are two $\mathcal{M}\Phi_{+} $ decomposition for $F$ then $F(N_{1}),F(N_{1}^{\prime})$ are closed finitely generated projective modules and $$[N_{1}]-[F(N_{1})]=[N_{1}^{\prime}]-[F(N_{1}^{\prime})] $$ in $K(A).$
\end{lemma}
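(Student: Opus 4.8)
The plan is to show that both sides of the claimed equality are equal to the single intrinsic quantity $[\ker F]\in K(\mathcal{A})$, which is manifestly independent of any choice of decomposition. The starting point is a purely formal observation: given an $\mathcal{M}\Phi_{+}$ decomposition $H_{\mathcal{A}} = M_{1} \tilde \oplus N_{1}\stackrel{F}{\longrightarrow} M_{2} \tilde \oplus N_{2}= H_{\mathcal{A}}$ in which $F$ has the diagonal matrix of Definition 2.1 with $F_{1}:M_{1}\to M_{2}$ an isomorphism and $F_{4}:N_{1}\to N_{2}$, write $x=m+n$ with $m\in M_{1}$, $n\in N_{1}$; then $Fx=F_{1}m+F_{4}n$ with $F_{1}m\in M_{2}$ and $F_{4}n\in N_{2}$, and since $M_{2}\cap N_{2}=\{0\}$, the equation $Fx=0$ forces $F_{1}m=0$, hence $m=0$. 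Thus $\ker F\subseteq N_{1}$ and $\ker F=\ker F_{4}$; the same reasoning applied to the second decomposition shows $\ker F\subseteq N_{1}'$ as well, so $\ker F$ sits simultaneously inside $N_{1}$ and $N_{1}'$.

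Next I would establish the three assertions about $F(N_{1})$ (and, symmetrically, $F(N_{1}')$). Since $N_{1}$ is finitely generated it is orthogonally complementable by \cite[Lemma 2.3.7]{MT}, so $F|_{N_{1}}=FJ_{N_{1}}$ is adjointable. For closedness I would use the hypothesis that ${\rm ran}\,F$ is closed: let $\Pi$ be the bounded projection of $H_{\mathcal{A}}$ onto $M_{2}$ along $N_{2}$. Because $M_{2}=F(M_{1})\subseteq{\rm ran}\,F$, the idempotent $\Pi$ maps ${\rm ran}\,F$ into $M_{2}\subseteq{\rm ran}\,F$, so $\Pi|_{{\rm ran}\,F}$ is a bounded idempotent on the Banach space ${\rm ran}\,F$ with range $M_{2}$; hence its kernel ${\rm ran}\,F\cap N_{2}$ is closed. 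A short computation with the matrix form (as in the first paragraph) gives ${\rm ran}\,F\cap N_{2}=F(N_{1})$, so $F(N_{1})$ is closed; being the image of the finitely generated module $N_{1}$ it is finitely generated; and \cite[Theorem 2.3.3]{MT} applied to the adjointable operator $F|_{N_{1}}$ with closed range shows that $F(N_{1})$ is orthogonally complementable in $H_{\mathcal{A}}$ and that $\ker F$ is orthogonally complementable in $N_{1}$. Since a finitely generated, orthogonally complementable submodule of $H_{\mathcal{A}}$ is finitely generated projective — precisely the setting in which $[\,\cdot\,]\in K(\mathcal{A})$ is defined in \cite[Definition 2.7.8]{MT} — the modules $N_{1}$, $\ker F$, $F(N_{1})$ and $P_{1}:=N_{1}\ominus\ker F$ are all finitely generated projective.

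To finish, write $N_{1}=\ker F\oplus P_{1}$ as an orthogonal direct sum inside $N_{1}$. Then $F|_{P_{1}}:P_{1}\to F(N_{1})$ is adjointable, injective (its kernel is $P_{1}\cap\ker F=\{0\}$) and onto $F(N_{1})=F(P_{1})$, with closed range, hence an isomorphism by the open mapping theorem; thus $P_{1}\cong F(N_{1})$. Passing to $K(\mathcal{A})$ and using additivity over orthogonal direct sums and invariance under isomorphism, $[N_{1}]=[\ker F]+[P_{1}]=[\ker F]+[F(N_{1})]$, i.e. $[N_{1}]-[F(N_{1})]=[\ker F]$. Running the identical argument on the second decomposition gives $[N_{1}']-[F(N_{1}')]=[\ker F]$, and comparing the two identities yields $[N_{1}]-[F(N_{1})]=[N_{1}']-[F(N_{1}')]$.

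I expect the only genuine obstacle to be the closedness of $F(N_{1})$: this is the single place where the hypothesis that ${\rm ran}\,F$ is closed is used, and it must be handled with care, since a finitely generated submodule of $H_{\mathcal{A}}$ need not be closed in general. The idempotent argument above settles it, but one must check carefully that $\Pi|_{{\rm ran}\,F}$ has range exactly $M_{2}$ and kernel exactly $F(N_{1})$, which rests on $F(M_{1})=M_{2}$, $F(N_{1})\subseteq N_{2}$ and $M_{2}\cap N_{2}=\{0\}$. Everything else is routine bookkeeping in $K_{0}(\mathcal{A})$.
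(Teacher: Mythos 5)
Your proposal is correct, but it follows a genuinely different route from the paper's proof. You reduce everything to the intrinsic class $[\ker F]$: the diagonal form forces $\ker F\subseteq N_{1}$, the closed-range hypothesis gives that $F(N_{1})={\rm ran}F\cap N_{2}$ is closed, so $FJ_{N_{1}}$ is adjointable with closed range and \cite[Theorem 2.3.3]{MT} splits $N_{1}=\ker F\oplus P_{1}$ with $F|_{P_{1}}:P_{1}\rightarrow F(N_{1})$ an isomorphism, whence $[N_{1}]-[F(N_{1})]=[\ker F]$ for \emph{every} $\mathcal{M}\Phi_{+}$ decomposition, and the equality follows by comparing the two decompositions through this common value. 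The paper never isolates $\ker F$: it places $N_{1}$ and a copy of $N_{1}^{\prime}$ inside a common $L_{m}$ via \cite[Theorem 2.7.5]{MT}, writes $L_{m}=N_{1}\tilde\oplus\tilde P_{1}=\tilde P_{1}^{\prime}\tilde\oplus p_{m}(N_{1}^{\prime})$, and then runs the argument of \cite[Theorem 2.7.9]{MT} (the well-definedness of the index) to obtain $[N_{1}]+[\tilde P_{1}]=[N_{1}^{\prime}]+[P_{1}^{\prime}]=[L_{m}]$ and $[F(N_{1})]+[F(\tilde P_{1})]=[F(N_{1}^{\prime})]+[F(P_{1}^{\prime})]=[L_{m}^{\prime}]$ with $F(\tilde P_{1})\cong\tilde P_{1}$, $F(P_{1}^{\prime})\cong P_{1}^{\prime}$, and subtracts. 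What your route buys is brevity and a sharper statement: the common value of the two differences is identified as the invariant $[\ker F]$ of $F$ alone, and it becomes transparent that closedness of ${\rm ran}F$ is used exactly to make $\ker F$ an orthogonal summand of the finitely generated module $N_{1}$, hence itself finitely generated projective so that $[\ker F]\in K(\mathcal{A})$ makes sense; the paper's route stays entirely within the Mishchenko--Fomenko $L_{n}$-machinery already used for the index and needs no discussion of $\ker F$. Two minor simplifications to your write-up: the idempotent argument for closedness is unnecessary, since $F(N_{1})={\rm ran}F\cap N_{2}$ is an intersection of two closed submodules of $H_{\mathcal{A}}$; and for projectivity of $\ker F$, $P_{1}$ and $F(N_{1})$ you can simply cite \cite[Theorem 2.7.5]{MT}, as the paper does.
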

\begin{proof}
First of all, it is obvious that $$F(N_{1})={\rm ran} F \cap N_{2}, F(N_{1}^{\prime})={\rm ran} F\cap N_{2}^{\prime}.$$ 
Now, since ${\rm ran} F$ is closed (by assumption), we have that $F(N_{1}),F(N_{1}^{\prime})$ are closed and also finitely generated as $N_{1}, N_{1}^{\prime} $ are so. Then, by \cite[ Lemma 2.3.7] {MT} 
$$F(N_{1}) \tilde \oplus \tilde N_{2}= N_{2},F(N_{1}^{\prime}) \tilde \oplus \tilde N_{1}^{\prime}= N_{2}^{\prime} .$$ for some closed submodules $\tilde N_{1}^{\prime},\tilde N_{2}^{\prime}$ of $N_{1},N_{2}$ respectively.\\
Thus $H_{\mathcal{A}}=M_{2}\tilde \oplus \tilde N_{2} \tilde \oplus F(N_{1})$ and $H_{\mathcal{A}}=M_{2}^{\prime} \tilde \oplus \tilde N_{2}^{\prime} \tilde \oplus F(N_{1}^{\prime})$. Since $F(N_{1}),F(N_{1}^{\prime})$ are finitely generated, from \cite[ Theorem 2.7.5] {MT} it follows that $F(N_{1}),F(N_{1}^{\prime})$ are projective. Moreover, again by \cite[ Theorem 2.7.5] {MT}, we may assume that there exists some $m$ such that 
$$N_{1}\subseteq L_m,L_m=N_{1} \tilde \oplus \tilde P_{1},M_{1}= \tilde P_{1} \tilde \oplus L_m$$ and 
$$ L_m=\tilde P_{1}^{\prime} \tilde \oplus p_m(N_{1}^{\prime}),p_m(N_{1}^{\prime}) \tilde = N_{1}^{\prime},$$
where $p_m$ is the projection onto $L_m$ along $L_m^{\bot}$ and $P_{1}^{\prime}, \tilde{P_{1}}$ are projective, finitely generated A-modules.\\
Set $L_m^{\prime}=F(L_m)+F(N_{1}) $ and ${L_m^{\prime \prime}}=F(L_m^{\bot})$. Note that ${\rm ran} F=L_m^{\prime} \tilde \oplus {L_m^{\prime \prime}}$. By the arguments similar to the proof of \cite[ Theorem 2.7.9] {MT}, we deduce that
$$[N_{1}]+ [\tilde P_{1}]=[N_{1}^{\prime}]+ [P_{1}^{\prime}]=[L_m] $$
$$[F(N_{1})]+[F(\tilde P_{1})]=[F(N_{1}^{\prime})]+[F(P_{1}^{\prime})]=[L_m^{\prime}], $$
$$[F(\tilde P_{1})]\cong[\tilde P_{1}], [F(P_{1}^{\prime})]\cong[P_{1}^{\prime}]$$
Hence $$[N_{1}]-[F(N_{1})]=[N_{1}^{\prime}]-[F(N_{1}^{\prime})]. $$
\end{proof}
\section{Generalized Schechter characterization of $\mathcal{M}\Phi_{+}$ operators on $H_{\mathcal{A}}$}
In this section we investigate the classes $\mathcal{M}\Phi_{+}(H_{\mathcal{A}})$, $B^{a}(H_{\mathcal{A}}) \setminus \mathcal{M}\Phi_{+}(H_{\mathcal{A}})$ and prove an analogue of some results concerning the classes $ \Phi_{+}(X)$ , $B(X) \setminus \Phi_{+}(X)$ (where $X$ is a Banach space) in \cite{S2}.\\
\begin{lemma}
Let $F \in B^{a}(M,N)$ Then $F \in \mathcal{M}\Phi_{+}(M,N) $ if and only if there exists a closed, orthogonally complementable submodule $M^{\prime} \subseteq M$ such that $F_{{\mid}_{M^{\prime}}}$ is bounded below and ${M^{\prime}}^\bot$ is finitely generated.
\end{lemma}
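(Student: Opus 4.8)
The plan is to prove both implications, using the characterization of $\mathcal{M}\Phi_{+}$ via one-sided invertibility modulo compacts (Theorem 2.2, in its version for operators between arbitrary Hilbert modules), together with the orthogonal-complementability machinery already invoked repeatedly in Section~2.

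For the ``if'' direction, suppose $M' \subseteq M$ is closed and orthogonally complementable with $F_{\mid_{M'}}$ bounded below and ${M'}^{\bot}$ finitely generated. Since $F_{\mid_{M'}}$ is bounded below, ${\rm ran}\,F_{\mid_{M'}} = F(M')$ is closed; and since $M'$ is orthogonally complementable, $F_{\mid_{M'}}$ is adjointable, so by \cite[Theorem 2.3.3]{MT} $F(M')$ is orthogonally complementable in $N$, say $N = F(M') \oplus F(M')^{\bot}$. With respect to $M = M' \oplus {M'}^{\bot} \stackrel{F}{\longrightarrow} F(M') \oplus F(M')^{\bot} = N$ the operator $F$ has matrix $\left\lbrack\begin{array}{ll} F_{1} & F_{2} \\ 0 & F_{4} \end{array}\right\rbrack$ with $F_{1} = F_{\mid_{M'}}$ an isomorphism onto $F(M')$. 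Now diagonalize exactly as in the proof of Theorem 2.2, part $2)\Rightarrow 1)$: conjugating by the unitary-triangular isomorphism $U = \left\lbrack\begin{array}{ll} 1 & -F_{1}^{-1}F_{2} \\ 0 & 1 \end{array}\right\rbrack$ turns the matrix into $\left\lbrack\begin{array}{ll} F_{1} & 0 \\ 0 & \tilde F_{4}\end{array}\right\rbrack$ with respect to $M = U(M') \oplus U({M'}^{\bot}) \stackrel{F}{\longrightarrow} F(M') \oplus F(M')^{\bot} = N$, and $U({M'}^{\bot})$ is finitely generated since ${M'}^{\bot}$ is. Hence $F \in \mathcal{M}\Phi_{+}(M,N)$.

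For the ``only if'' direction, let $M = M_{1} \tilde\oplus N_{1} \stackrel{F}{\longrightarrow} M_{2} \tilde\oplus N_{2} = N$ be an $\mathcal{M}\Phi_{+}$ decomposition, so $F_{1} = F_{\mid_{M_{1}}} \colon M_{1} \to M_{2}$ is an isomorphism and $N_{1}$ is finitely generated. Since $N_{1}$ is finitely generated it is orthogonally complementable by \cite[Lemma 2.3.7]{MT}, so $M = N_{1}^{\bot} \oplus N_{1}$. Following the argument in the proof of Theorem 2.2, part $1)\Rightarrow 2)$ (via the proof of \cite[Theorem 2.7.6]{MT}), the restriction of $F$ to $N_{1}^{\bot}$, being $F$ composed with the projection $\sqcap_{M_{1}}$ along $N_{1}$ followed by $F_{1}$, is an isomorphism onto $F(N_{1}^{\bot})$; in particular $F_{\mid_{N_{1}^{\bot}}}$ is bounded below. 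Taking $M' = N_{1}^{\bot}$, which is orthogonally complementable with ${M'}^{\bot} = N_{1}$ finitely generated, gives the desired submodule.

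The main obstacle is the careful bookkeeping in the ``only if'' direction: one must check that restricting $F$ to $N_{1}^{\bot}$ (rather than to $M_{1}$ itself) is still bounded below. This is exactly the content extracted from the proof of \cite[Theorem 2.7.6]{MT}, where one shows $\sqcap_{M_{1}\mid N_{1}^{\bot}} \colon N_{1}^{\bot} \to M_{1}$ is an isomorphism because $M = M_{1} \tilde\oplus N_{1} = N_{1}^{\bot} \oplus N_{1}$, and then $F_{\mid_{N_{1}^{\bot}}} = F_{1} \circ \sqcap_{M_{1}\mid N_{1}^{\bot}}$ is a composition of isomorphisms onto its range; boundedness below is then immediate. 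Everything else is the standard diagonalization trick, already used several times in Section~2, so I would merely cite those proofs rather than repeat the matrix computations.
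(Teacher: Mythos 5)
Your ``if'' direction is the paper's argument verbatim: boundedness below of $F_{\mid_{M'}}$ gives closedness of $F(M')$, orthogonal complementability of $M'$ gives adjointability of $F_{\mid_{M'}}$, \cite[Theorem 2.3.3]{MT} gives $N=F(M')\oplus F(M')^{\perp}$, and the upper--triangular matrix is then diagonalized exactly as in Theorem 2.2, with $U({M'}^{\perp})$ finitely generated. No issue there.

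In the ``only if'' direction you also take the paper's route (replace $M_{1}$ by $N_{1}^{\perp}$, using \cite[Lemma 2.3.7]{MT} and the argument of \cite[Theorem 2.7.6]{MT}), but the identity you use to get boundedness below, namely $F_{\mid_{N_{1}^{\perp}}}=F_{1}\circ \sqcap_{M_{1}\mid_{N_{1}^{\perp}}}$, is false in general. For $x\in N_{1}^{\perp}$ the component $\sqcap_{N_{1}}x$ need not vanish (since $N_{1}^{\perp}\neq M_{1}$ in general), so $Fx=F_{1}\sqcap_{M_{1}}x+F_{4}\sqcap_{N_{1}}x$ picks up a term in $N_{2}$; in particular $F(N_{1}^{\perp})$ need not be contained in $M_{2}$, so $F_{\mid_{N_{1}^{\perp}}}$ is not literally a composition of isomorphisms onto $M_{2}$. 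What is true --- and what the paper itself uses at the analogous places (proof of Theorem 2.3, part $1)\Rightarrow 2)$, where it is $P_{{N_{2}'}^{\perp}}D_{\mid_{{N_{1}'}^{\perp}}}$ that is factored, and proof of Lemma 2.16, where $\sqcap_{M_{2}}F$ is factored) --- is the compressed identity $\sqcap_{M_{2}}\circ F_{\mid_{N_{1}^{\perp}}}=F_{1}\circ \sqcap_{M_{1}\mid_{N_{1}^{\perp}}}$, where $\sqcap_{M_{2}}$ is the bounded projection onto $M_{2}$ along $N_{2}$. The right-hand side is an isomorphism of $N_{1}^{\perp}$ onto $M_{2}$, hence bounded below, and boundedness of $\sqcap_{M_{2}}$ then yields $\parallel x\parallel \leq c\parallel F_{1}\sqcap_{M_{1}}x\parallel \leq c\parallel \sqcap_{M_{2}}\parallel \, \parallel Fx\parallel$ for $x\in N_{1}^{\perp}$, so $F_{\mid_{N_{1}^{\perp}}}$ is bounded below (and hence an isomorphism onto its closed image). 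With this one-line repair your proof is complete and coincides with the paper's, which settles this direction by citing the proof of \cite[Theorem 2.7.6]{MT} to pass to a decomposition $M=M'\oplus {M'}^{\perp}$ with ${M'}^{\perp}$ finitely generated and $F_{\mid_{M'}}$ an isomorphism onto its image.
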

\begin{proof}
If such $M^{\prime}$ exists, then $F(M^{\prime})$ is closed in $N.$ Moreover, as $M^{\prime}$ is orthogonally complementable, $F_{{\mid}_{M^{\prime}}}$ is adjointable. By \cite[ Theorem 2.3.3] {MT}, $F(M^{\prime})$ is orthogonally complementable in $N$. Then with respect to the decomposition
$$M = M^{\prime} \oplus {{M^{\prime}}^{\bot}}‎‎\stackrel{F}{\longrightarrow} F(M^{\prime}) \oplus F( {M^{\prime}})^{\bot}=N ,$$
$F$ has the matrix
\begin{center}
	$\left\lbrack
	\begin{array}{ll}
	F_{1} & F_{2} \\
	0 & F_{4} \\
	\end{array}
	\right \rbrack,
	$
\end{center}
where $F_{1}$ is an isomorphism.Using the technique of diagonalization as in the proof of \cite[ Lemma 2.7.10] {MT} and the fact that $M^{{\prime}^{\perp}}$ is finitely generated, we deduce that $F \in \mathcal{M}\Phi_{+}(M,N) $. On the other hand if $F \in \mathcal{M}\Phi_{+}(M,N) $, by the similar arguments as in the proof of \cite[ Theorem 2.7.6 ] {MT} we may assume that there exists a decomposition
$$M = M^{\prime} \oplus {{M^{\prime}}^{\bot}}‎‎\stackrel{F}{\longrightarrow} N^{\prime} \oplus {N^{\prime \prime}}=N ,$$
with respect to which F has the matrix
\begin{center}
	$	\left\lbrack
	\begin{array}{ll}
	F_{1} & 0 \\
	0 & F_{4} \\
	\end{array}
	\right \rbrack,
	$
\end{center}
where $F_{1}$ is an isomorphism and ${M^{\prime}}^{\bot}$ is finitely generated. 
\end{proof}
\begin{lemma}
Let $F \in B^{a}(H_{\mathcal{A}}) \setminus \mathcal{M}\Phi_{+}(H_{\mathcal{A}}) $. Then there exists a sequence $\lbrace x_{k} \rbrace \subseteq H_{\mathcal{A}}$ and an increasing sequence $\lbrace n_{k} \rbrace \subseteq \mathbb{N}$ s.t.\\
$$x_{k} \in L_{{n}_{k}} \setminus L_{{n}_{k-1}} \text{ for all k } \in \mathbb{N},  \parallel x_{k} \parallel \leq 1 \text{ for all k } \in \mathbb{N} $$
and
$$\parallel Fx_{k} \parallel \leq 2^{1-2k} \text{ for all k } \in \mathbb{N} .$$ 
\end{lemma}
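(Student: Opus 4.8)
The plan is to recast the hypothesis $F\notin\mathcal{M}\Phi_{+}(H_{\mathcal{A}})$ as the statement that $F$ fails to be bounded below on every ``tail'' $L_{n}^{\bot}$, and then to obtain the vectors $x_{k}$ by truncating approximate minimizers to finite support. Here $L_{n}$ denotes the standard submodule generated by the first $n$ coordinates of $H_{\mathcal{A}}$ and $p_{n}$ the orthogonal projection onto it (notation as in the proof of Lemma~2.16), so that $H_{\mathcal{A}}=L_{n}\tilde\oplus L_{n}^{\bot}$, $L_{n}$ is finitely generated, and $\bigcup_{m}L_{m}$ is dense in $H_{\mathcal{A}}$. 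The first step is the observation that for each $n$ the submodule $L_{n}^{\bot}$ is closed, orthogonally complementable, and has finitely generated orthogonal complement $L_{n}$; hence by the bounded-below characterization of $\mathcal{M}\Phi_{+}$ in the preceding lemma, if $F|_{L_{n}^{\bot}}$ were bounded below we would obtain $F\in\mathcal{M}\Phi_{+}(H_{\mathcal{A}})$, contrary to hypothesis. Therefore $F|_{L_{n}^{\bot}}$ is not bounded below for any $n$; equivalently, for every $n$ and every $\varepsilon>0$ there is $y\in L_{n}^{\bot}$ with $\|y\|=1$ and $\|Fy\|<\varepsilon$.

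Next I would build $n_{0}<n_{1}<n_{2}<\cdots$ and $x_{1},x_{2},\dots$ recursively, starting from $n_{0}=0$ (so $L_{n_{0}}=\{0\}$ and the condition $x_{1}\in L_{n_{1}}\setminus L_{n_{0}}$ just reads $x_{1}\neq0$). Suppose $n_{k-1}$ has been constructed. Using the first paragraph, pick $y_{k}\in L_{n_{k-1}}^{\bot}$ with $\|y_{k}\|=1$ and $\|Fy_{k}\|<\varepsilon_{k}$, where $\varepsilon_{k}>0$ is to be fixed below. By density of $\bigcup_{m}L_{m}$, choose an index $m>n_{k-1}$ and $z_{k}\in L_{m}$ with $\|z_{k}-y_{k}\|<\delta_{k}$, and put $\tilde{x}_{k}:=(1-p_{n_{k-1}})z_{k}=z_{k}-p_{n_{k-1}}z_{k}\in L_{m}\cap L_{n_{k-1}}^{\bot}$; concretely, $\tilde{x}_{k}$ is supported on the coordinates $n_{k-1}+1,\dots,m$. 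Since $y_{k}\in L_{n_{k-1}}^{\bot}$ gives $y_{k}=(1-p_{n_{k-1}})y_{k}$ and $\|1-p_{n_{k-1}}\|\le1$, we have $\|\tilde{x}_{k}-y_{k}\|<\delta_{k}$, hence $\|\tilde{x}_{k}\|>1-\delta_{k}>0$ provided $\delta_{k}<1$; in particular $\tilde{x}_{k}\neq0$. Finally set $n_{k}:=m$ and $x_{k}:=\tilde{x}_{k}/\|\tilde{x}_{k}\|$.

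It then remains to verify the three required properties and fix the constants. Being a nonzero scalar multiple of $\tilde{x}_{k}$, the vector $x_{k}$ lies in $(L_{n_{k}}\cap L_{n_{k-1}}^{\bot})\setminus\{0\}$, so $x_{k}\in L_{n_{k}}\setminus L_{n_{k-1}}$ (an element of $L_{n_{k-1}}\cap L_{n_{k-1}}^{\bot}$ is $0$), while $\|x_{k}\|=1\le1$ and $n_{k}>n_{k-1}$ keeps $\{n_{k}\}$ strictly increasing. Moreover
$$\|Fx_{k}\|=\frac{\|F\tilde{x}_{k}\|}{\|\tilde{x}_{k}\|}\le\frac{\|Fy_{k}\|+\|F\|\,\|\tilde{x}_{k}-y_{k}\|}{1-\delta_{k}}<\frac{\varepsilon_{k}+\|F\|\,\delta_{k}}{1-\delta_{k}},$$
so it suffices to choose $\varepsilon_{k},\delta_{k}$ small, depending only on $k$ and $\|F\|$, with $(\varepsilon_{k}+\|F\|\delta_{k})/(1-\delta_{k})\le 2^{1-2k}$; for instance $\varepsilon_{k}=2^{-2k-1}$ and $\delta_{k}=\min\{1/2,\,2^{-2k-1}/(\|F\|+1)\}$ work after an elementary estimate. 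This completes the construction. The argument is largely routine once the reduction in the first paragraph is available, and the only point needing care is keeping the truncated vector genuinely outside $L_{n_{k-1}}$ — which is precisely why one works with minimizers inside $L_{n_{k-1}}^{\bot}$ and truncates by $1-p_{n_{k-1}}$, rather than truncating an arbitrary approximate minimizer in $H_{\mathcal{A}}$. I therefore expect the main conceptual content to be the use of the bounded-below characterization of $\mathcal{M}\Phi_{+}$ from the preceding lemma, with the remainder being a standard ``gliding hump'' construction.
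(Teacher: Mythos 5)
Your proposal is correct and follows essentially the same route as the paper: the reduction via the bounded-below characterization of $\mathcal{M}\Phi_{+}$ (the preceding lemma) applied to $L_{n}^{\bot}$, followed by a gliding-hump construction that truncates approximate kernel vectors to finitely many coordinates. The only cosmetic difference is that the paper truncates by applying $P_{L_{n_{k}}}$ directly to the unit vector $\tilde{x}_{k}\in L_{n_{k-1}}^{\bot}$ (using $\parallel P_{L_{n}^{\bot}}\tilde{x}_{k}\parallel\rightarrow 0$) and keeps $\parallel x_{k}\parallel\leq 1$ without normalizing, whereas you approximate via density of $\bigcup_{m}L_{m}$, re-project off $L_{n_{k-1}}$ and renormalize.
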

\begin{proof}   
Since $F \notin \mathcal{M}\Phi_{+}(H_{\mathcal{A}}) $, there exists an $$ \tilde x_{1} \subseteq H_{\mathcal{A}}, \parallel \tilde x_{1} \parallel \leq 1 \textmd{, s.t.} \parallel F \tilde x_{1} \parallel \leq \frac{1}{4}$$ because $F$ is then not bounded below by the previous lemma. As $$\parallel P_{L_{n_{1}}^\bot} \tilde x_{1} \parallel \longrightarrow 0 \textmd{ when } n \rightarrow \infty,$$ there exists an $ n_{1} \in \mathbb{N} $ such that $\parallel P_{{L_{n_{1}}^\bot}} \tilde x_{1} \parallel \leq \frac{1}{\parallel F \parallel} \frac{1}{4}$ (here again $P_{{L_{n_{1}}\bot}}$ denotes the orthogonal projection onto $L_{n}^{\bot} $ along $L_n$). Hence
$$\parallel FP_{{L_{n_{1}}}} \tilde x_{1} \parallel \leq \parallel F \tilde x_{1} \parallel + \parallel FP_{{L_{n_{1}}^\bot}} \tilde x_{1} \parallel \leq \frac{1}{4} + \frac{1}{4} =\frac{1}{2}.$$ 
Set $x_{1}=P_{{L_{n_{1}}}} \tilde x_{1} $, then $$\parallel x_{1} \parallel \leq \parallel \tilde x_{1} \parallel \leq 1, \parallel Fx_{1} \parallel \leq \frac{1}{2}$$ 
and $x_{1} \in L_{n_{1}} $. Suppose so that there exists $$x_{1},\dots,x_{k} \in H_{\mathcal{A}}, n_{1} \leq n_{2} \leq \dots \leq n_{k} $$ such that the hypothesis of the lemma holds. By previous lemma, $F$ is not bounded below on $L_{n_{k}}^{\bot},$ hence we can find an ${\tilde x_{k+1}} \in L_{n_{k}}^{\bot}$ such that $\parallel {\tilde x_{k+1}} \parallel =1 $ and $$\parallel F\tilde x_{k+1} \parallel \leq 2^{-2(k+1)} .$$
Again, since
$$\lim_{n \to 0} \parallel P_{{L_{n}^\bot}} \tilde x_{k+1} \parallel=0 ,$$ 
there exits an $n_{k+1} \geq n_{k} $ such that
$$\parallel P_{{L_{n_{k+1}}^\bot}} \tilde x_{k+1} \parallel \leq \frac{1}{\parallel F \parallel} 2^{-2(k+1)} .$$
Then
$$\parallel F P_{{L_{n_{k+1}}}} \tilde x_{k+1} \parallel \leq \parallel F \tilde x_{k+1} \parallel + \parallel FP_{{L_{n_{k+1}}^\bot}} \tilde x_{k+1} \parallel \leq 2^{-2(k+1)}+2^{-2(k+1)}= 2^{1-2(k+1)}.$$
Set $x_{k+1}=P_{{L_{n_{k+1}}}}\tilde x_{k+1} $. We then have $x_{k+1} \in L_{{n}_{k+1}} \setminus L_{{n}_{k}}$
(because $\tilde x_{k+1} \in {L_{n_{k}}^\bot}$),
$$\parallel x_{k+1} \parallel \leq \parallel \tilde x_{k+1} \parallel = 1 \mbox{ and } \parallel Fx_{k+1} \parallel \leq 2^{1-2(k+1)}.$$ 
By induction, the lemma follows. 
\end{proof}
\section{Openness of the set of semi-{$\mathcal{A}$}-Fredholm operators on $H_{\mathcal{A}}$}
In this section we prove that the sets $\mathcal{M}\Phi_{+}(H_{\mathcal{A}}) \setminus \mathcal{M}\Phi(H_{\mathcal{A}})$ and  $\mathcal{M}\Phi_{-}(H_{\mathcal{A}}) \setminus \mathcal{M}\Phi(H_{\mathcal{A}})$  are open in the norm topology, as an analogue of the result in \cite{S}.
Also, we derive some consequences. Recall that $\mathcal{M}\Phi(H_{\mathcal{A}})$  is open in the norm topology by \cite[ Lemma 2.7.10] {MT}.
\begin{theorem}
The sets $\mathcal{M}\Phi_{+}(H_{\mathcal{A}}) \setminus \mathcal{M}\Phi(H_{\mathcal{A}})$ and $\mathcal{M}\Phi_{-}(H_{\mathcal{A}}) \setminus \mathcal{M}\Phi(H_{\mathcal{A}})$ are open in $ B^{a}(H_{\mathcal{A}}),$ where $ B^{a}(H_{\mathcal{A}})$ is equipped with the norm topology.
\end{theorem}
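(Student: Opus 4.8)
The plan is to prove that every $F \in \mathcal{M}\Phi_{+}(H_{\mathcal{A}}) \setminus \mathcal{M}\Phi(H_{\mathcal{A}})$ has an open ball around it still contained in $\mathcal{M}\Phi_{+}(H_{\mathcal{A}}) \setminus \mathcal{M}\Phi(H_{\mathcal{A}})$, and then to deduce the statement for $\mathcal{M}\Phi_{-}$ by passing to adjoints via Corollary 2.11. Note that openness of $\mathcal{M}\Phi_{+}(H_{\mathcal{A}})$ by itself is not enough, since $\mathcal{M}\Phi(H_{\mathcal{A}})$ is also open (\cite[Lemma 2.7.10]{MT}), so $\mathcal{M}\Phi_{+}(H_{\mathcal{A}}) \setminus \mathcal{M}\Phi(H_{\mathcal{A}})$ is a priori only the intersection of an open set with a closed set; the substance of the proof is to show that a small perturbation of $F$ cannot land in $\mathcal{M}\Phi(H_{\mathcal{A}})$, and this is exactly where Lemma 2.16 is decisive.

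First I would fix a convenient decomposition for $F$. Arguing as in the proof of Theorem 2.2, part $1) \Rightarrow 2)$ — using that $N_{1}$ is orthogonally complementable since finitely generated (\cite[Lemma 2.3.7]{MT}) — $F$ admits an $\mathcal{M}\Phi_{+}$ decomposition of the form
$$ H_{\mathcal{A}} = N_{1}^{\bot} \oplus N_{1} \stackrel{F}{\longrightarrow} F(N_{1}^{\bot}) \oplus F(N_{1}^{\bot})^{\bot} = H_{\mathcal{A}}, $$
with both direct sums orthogonal, $F$ upper triangular, the corner $F_{1} = F|_{N_{1}^{\bot}}$ an isomorphism onto $F(N_{1}^{\bot})$, and $N_{1}$ finitely generated. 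Write $M_{1} = N_{1}^{\bot}$, $M_{2} = F(N_{1}^{\bot})$, $N_{2} = F(N_{1}^{\bot})^{\bot}$. Because $F \notin \mathcal{M}\Phi(H_{\mathcal{A}})$, the module $N_{2}$ is not finitely generated: otherwise one further diagonalization would turn this into an $\mathcal{M}\Phi$ decomposition with $N_{1}, N_{2}$ both finitely generated, forcing $F \in \mathcal{M}\Phi(H_{\mathcal{A}})$.

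Next I would perturb. For $G \in B^{a}(H_{\mathcal{A}})$, write its matrix $\left[\begin{smallmatrix} G_{1} & G_{2} \\ G_{3} & G_{4} \end{smallmatrix}\right]$ with respect to these two orthogonal decompositions; all entries are adjointable and depend norm-continuously on $G$, so there is $\varepsilon > 0$ such that $\| G - F \| < \varepsilon$ forces $G_{1}$ to be an isomorphism. For such $G$, the diagonalization technique of \cite[Lemma 2.7.10, 2.7.11]{MT} produces adjointable isomorphisms $U$ of $M_{1} \oplus N_{1}$ and $W$ of $M_{2} \oplus N_{2}$, triangular with identities on the diagonal, such that with respect to
$$ H_{\mathcal{A}} = U(M_{1}) \tilde\oplus U(N_{1}) \stackrel{G}{\longrightarrow} W^{-1}(M_{2}) \tilde\oplus W^{-1}(N_{2}) = H_{\mathcal{A}} $$
$G$ has a diagonal matrix with the isomorphism $G_{1}$ as its first corner. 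Here $U(N_{1}) \cong N_{1}$ is finitely generated, so this is an $\mathcal{M}\Phi_{+}$ decomposition and $G \in \mathcal{M}\Phi_{+}(H_{\mathcal{A}})$; moreover, since $W$ is triangular with identity diagonal, $W^{-1}(N_{2}) = N_{2}$, which is not finitely generated.

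Finally, suppose toward a contradiction that $G \in \mathcal{M}\Phi(H_{\mathcal{A}})$. Then $G$ also admits a genuine $\mathcal{M}\Phi$ decomposition (both outer summands finitely generated); applying Lemma 2.16 to $G$ with this $\mathcal{M}\Phi$ decomposition as the first one and the diagonal decomposition above — whose "$N_{1}$" summand $U(N_{1})$ is finitely generated and whose "$N_{2}$" summand is the merely closed module $N_{2}$ — as the second one, we conclude that $N_{2}$ is finitely generated, a contradiction. Hence the ball $\{ G : \| G - F \| < \varepsilon \}$ is contained in $\mathcal{M}\Phi_{+}(H_{\mathcal{A}}) \setminus \mathcal{M}\Phi(H_{\mathcal{A}})$, so this set is open. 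For $\mathcal{M}\Phi_{-}(H_{\mathcal{A}}) \setminus \mathcal{M}\Phi(H_{\mathcal{A}})$, recall that $G \mapsto G^{*}$ is an isometric involution of $B^{a}(H_{\mathcal{A}})$, hence a homeomorphism; by Corollary 2.11 it maps $\mathcal{M}\Phi_{-}(H_{\mathcal{A}}) \setminus \mathcal{M}\Phi(H_{\mathcal{A}})$ onto $\mathcal{M}\Phi_{+}(H_{\mathcal{A}}) \setminus \mathcal{M}\Phi(H_{\mathcal{A}})$, so the former is open as well. The main obstacle is the careful handling of the diagonalization step — keeping the initial decompositions orthogonal so that all matrix entries are adjointable, and tracking precisely which summands remain finitely generated and which do not — together with matching the output exactly to the hypotheses of Lemma 2.16, which is the real engine of the argument.
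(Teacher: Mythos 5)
Your proposal is correct and follows essentially the same route as the paper: pick a decomposition for $F$ in which the first complement is finitely generated and the second necessarily is not (since $F\notin\mathcal{M}\Phi(H_{\mathcal{A}})$), perturb via the diagonalization argument of \cite[Lemma 2.7.10]{MT} to obtain for the nearby operator a diagonal decomposition whose first complement is finitely generated and whose second is isomorphic to the non--finitely generated $N_{2}$, and then invoke Lemma 2.16 to exclude membership in $\mathcal{M}\Phi(H_{\mathcal{A}})$, handling $\mathcal{M}\Phi_{-}(H_{\mathcal{A}})\setminus\mathcal{M}\Phi(H_{\mathcal{A}})$ by adjoints and Corollary 2.11. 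Your extra care with the orthogonal (upper-triangular) starting decomposition and the explicit observation $W^{-1}(N_{2})=N_{2}$ are harmless refinements of the same argument.
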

\begin{proof}
Let $F \in \mathcal{M}\Phi_{+}(H_{\mathcal{A}}) \setminus \mathcal{M}\Phi(H_{\mathcal{A}}).$ Then there exists a decomposition $$H_{\mathcal{A}} = M_{1} \tilde \oplus {N_{1}}‎‎\stackrel{F}{\longrightarrow} M_{2} \tilde \oplus N_{2}= H_{\mathcal{A}} $$ with respect to which F has the matrix
\begin{center}
	$\left\lbrack
	\begin{array}{ll}
	F_{1} & 0 \\
	0 & F_{4} \\
	\end{array}
	\right \rbrack,
	$
\end{center}
where $F_{1}$ is an isomorphism, $N_{1}$ is closed finitely generated and $N_{2}$ is closed, but \underline{not} finitely generated. If $ D \in B^{a}(H_{\mathcal{A}})$ such that $\parallel D \parallel < \epsilon,$ then for $ \epsilon$ small enough we may by the same arguments as in the proof of \cite[ Lemma 2.7.10] {MT} find isomorphisms $U_{1}, U_{2}$ such that $F+D$ has the matrix
\begin{center}
	$\left\lbrack
	\begin{array}{ll}
	(F+D)_{1} & 0 \\
	0 & (F+D)_{4} \\
	\end{array}
	\right \rbrack
	$
\end{center}
with respect to the decomposition
$$H_{\mathcal{A}} = U_{1}(M_{1}) \tilde \oplus U_{1}(N_{1})‎‎\stackrel{F+D}{\longrightarrow} U_{2}^{-1}(M_{2}) \tilde \oplus U_{2}^{-1}(N_{2})= H_{\mathcal{A}},$$
where $(F+D)_{1}$ is an isomorphism. Since $U_{2}$ is an isomorphism and $N_{2}$ is not finitely generated, it follows that $U_{2}^{-1}(N_{2})$ is not finitely generated. Now, as $F+D$ has the matrix
\begin{center}
	$\left\lbrack
	\begin{array}{ll}
	(F+D)_{1} & 0 \\
	0 & (F+D)_{4} \\
	\end{array}
	\right \rbrack
	$
\end{center} 
with respect to the decomposition above, where$(F+D)_{1}$ is an isomorphism, $U_{1}(N_{1})$ is finitely generated whereas $U_{2}^{-1}(N_{2})$ is \underline{not} finitely generated, it follows by  Lemma 2.16 that $$(F+D) \in \mathcal{M}\Phi_{+}(H_{\mathcal{A}}) \setminus \mathcal{M}\Phi(H_{\mathcal{A}})$$ (because, by that lemma, if $F+D$ was $\mathcal{A}$-Fredholm, then $U_{2}^{-1}(N_{2})$ would be finitely generated, which is a contradiction). The first part of the theorem follows, whereas the second part can be proved in the analogue way or can be deduced directly from the first part by passing to the adjoints and using  Corollary 2.11.
\end{proof}
\begin{corollary}
If $ F \in B^{a}(H_{\mathcal{A}})$ belongs to the boundary of $ \mathcal{M}\Phi(H_{\mathcal{A}})$ in $B^{a}(H_{\mathcal{A}})$ then $ F \notin \mathcal{M}\Phi_{\pm}(H_{\mathcal{A}}).$
\end{corollary}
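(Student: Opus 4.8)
The plan is to reduce everything to Theorem 4.1 together with the openness of $\mathcal{M}\Phi(H_{\mathcal{A}})$, interpreting $\mathcal{M}\Phi_{\pm}(H_{\mathcal{A}})$ as $\mathcal{M}\Phi_{+}(H_{\mathcal{A}}) \cup \mathcal{M}\Phi_{-}(H_{\mathcal{A}})$. First I would record that since $\mathcal{M}\Phi(H_{\mathcal{A}})$ is open in $B^{a}(H_{\mathcal{A}})$ by \cite[Lemma 2.7.10]{MT}, it is disjoint from its own boundary; hence a boundary point $F$ satisfies $F \notin \mathcal{M}\Phi(H_{\mathcal{A}})$, while every open ball centered at $F$ does intersect $\mathcal{M}\Phi(H_{\mathcal{A}})$ (this is the defining property of a boundary point of a set not containing it).

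Next I would argue by contradiction. Suppose $F \in \mathcal{M}\Phi_{+}(H_{\mathcal{A}})$. Combined with $F \notin \mathcal{M}\Phi(H_{\mathcal{A}})$ from the previous step, this gives $F \in \mathcal{M}\Phi_{+}(H_{\mathcal{A}}) \setminus \mathcal{M}\Phi(H_{\mathcal{A}})$. By Theorem 4.1 this set is open, so there is an $\varepsilon > 0$ with the open ball $B(F,\varepsilon) \subseteq \mathcal{M}\Phi_{+}(H_{\mathcal{A}}) \setminus \mathcal{M}\Phi(H_{\mathcal{A}})$. But then $B(F,\varepsilon) \cap \mathcal{M}\Phi(H_{\mathcal{A}}) = \emptyset$, contradicting the fact that $F$ lies in the boundary of $\mathcal{M}\Phi(H_{\mathcal{A}})$. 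Hence $F \notin \mathcal{M}\Phi_{+}(H_{\mathcal{A}})$.

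Finally, I would handle the case $F \in \mathcal{M}\Phi_{-}(H_{\mathcal{A}})$ in exactly the same manner, using the openness of $\mathcal{M}\Phi_{-}(H_{\mathcal{A}}) \setminus \mathcal{M}\Phi(H_{\mathcal{A}})$ from Theorem 4.1 (or, alternatively, by passing to adjoints via Corollary 2.11 and reducing to the $\mathcal{M}\Phi_{+}$ case, since $F$ is a boundary point of $\mathcal{M}\Phi(H_{\mathcal{A}})$ iff $F^{*}$ is, again by Corollary 2.11). Combining the two cases yields $F \notin \mathcal{M}\Phi_{+}(H_{\mathcal{A}}) \cup \mathcal{M}\Phi_{-}(H_{\mathcal{A}})$. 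There is no real obstacle here: the entire content is already packaged in Theorem 4.1, and the only point requiring a word of care is spelling out why a boundary point of the open set $\mathcal{M}\Phi(H_{\mathcal{A}})$ cannot sit inside another open set disjoint from $\mathcal{M}\Phi(H_{\mathcal{A}})$.
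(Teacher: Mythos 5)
Your argument is correct and is essentially the paper's own proof: the paper simply invokes the classical argument of \cite[Corollary 1.6.10]{ZZRD} together with the openness of $\mathcal{M}\Phi_{\pm}(H_{\mathcal{A}})\setminus\mathcal{M}\Phi(H_{\mathcal{A}})$ from Theorem 4.1 and of $\mathcal{M}\Phi(H_{\mathcal{A}})$ from \cite[Lemma 2.7.10]{MT}, which is exactly the contradiction you spell out. No gap; your write-up just makes the cited classical reasoning explicit.
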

\begin{proof}
Follows by the same arguments as in the proof of \cite[ Corollary 1.6.10] {ZZRD} since $$\mathcal{M}\Phi_{\pm} (H_{\mathcal{A}}) \setminus \mathcal{M}\Phi(H_{\mathcal{A}})= (\mathcal{M}\Phi_{+}(H_{\mathcal{A}})\setminus \mathcal{M}\Phi(H_{\mathcal{A}})) \cup (\mathcal{M}\Phi_{-}(H_{\mathcal{A}})\setminus \mathcal{M}\Phi(H_{\mathcal{A}})) $$ is open in $B^{a}(H_{\mathcal{A}}).$
\end{proof}
\begin{corollary}
Let $f:[0,1] \rightarrow B^{a}(H_{\mathcal{A}}) $ be continuous and assume that $f([0,1]) \subseteq \mathcal{M}\Phi_{\pm}(H_{\mathcal{A}}) .$ Then the following statments hold:\\
1) If $f(0) \in \mathcal{M}\Phi_{+}(H_{\mathcal{A}}) \setminus \mathcal{M}\Phi(H_{\mathcal{A}}) ,$ then $f(1) \in \mathcal{M}\Phi_{+}(H_{\mathcal{A}}) \setminus \mathcal{M}\Phi(H_{\mathcal{A}}) $\\
2) If $ f(0) \in \mathcal{M}\Phi_{-}(H_{\mathcal{A}}) \setminus \mathcal{M}\Phi(H_{\mathcal{A}}) ,$ then $f(1) \in \mathcal{M}\Phi_{-}(H_{\mathcal{A}}) \setminus \mathcal{M}\Phi(H_{\mathcal{A}}) $\\
3) If $f(0) \in \mathcal{M}\Phi(H_{\mathcal{A}}),$ then $f(1) \in \mathcal{M}\Phi(H_{\mathcal{A}}) $ and ${\rm index} f(0)={\rm index} f(1)$
\end{corollary}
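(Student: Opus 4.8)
The plan is to exploit the connectedness of $[0,1]$ together with the openness results already established. By Corollary 2.4 we have the disjoint decomposition
$$\mathcal{M}\Phi_{\pm}(H_{\mathcal{A}}) = \big(\mathcal{M}\Phi_{+}(H_{\mathcal{A}})\setminus \mathcal{M}\Phi(H_{\mathcal{A}})\big) \sqcup \big(\mathcal{M}\Phi_{-}(H_{\mathcal{A}})\setminus \mathcal{M}\Phi(H_{\mathcal{A}})\big) \sqcup \mathcal{M}\Phi(H_{\mathcal{A}}),$$
and all three pieces are open in $B^{a}(H_{\mathcal{A}})$: the first two by Theorem 4.1 and the third by \cite[ Lemma 2.7.10]{MT}. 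Since $f$ is continuous with $f([0,1]) \subseteq \mathcal{M}\Phi_{\pm}(H_{\mathcal{A}})$, the preimages
$$f^{-1}\big(\mathcal{M}\Phi_{+}(H_{\mathcal{A}})\setminus \mathcal{M}\Phi(H_{\mathcal{A}})\big),\quad f^{-1}\big(\mathcal{M}\Phi_{-}(H_{\mathcal{A}})\setminus \mathcal{M}\Phi(H_{\mathcal{A}})\big),\quad f^{-1}\big(\mathcal{M}\Phi(H_{\mathcal{A}})\big)$$
form a partition of $[0,1]$ into relatively open subsets.

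Next I would invoke connectedness: a connected space written as a disjoint union of open subsets must have all but one of them empty. Hence exactly one of the three preimages equals $[0,1]$ and the other two are empty. In case 1), the point $0$ belongs to $f^{-1}(\mathcal{M}\Phi_{+}(H_{\mathcal{A}})\setminus \mathcal{M}\Phi(H_{\mathcal{A}}))$, so this set is nonempty and therefore equals $[0,1]$; in particular $1$ lies in it, that is, $f(1) \in \mathcal{M}\Phi_{+}(H_{\mathcal{A}})\setminus \mathcal{M}\Phi(H_{\mathcal{A}})$. Cases 2) and 3) follow verbatim with the corresponding set.

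For the index assertion in 3) I would additionally use that the index is locally constant on $\mathcal{M}\Phi(H_{\mathcal{A}})$: by the perturbation construction in the proof of \cite[ Lemma 2.7.10]{MT}, for $G$ sufficiently close to a given $F \in \mathcal{M}\Phi(H_{\mathcal{A}})$ one obtains an $\mathcal{M}\Phi$-decomposition for $G$ of the form $U_{1}(M_{1}) \tilde\oplus U_{1}(N_{1}) \stackrel{G}{\longrightarrow} U_{2}^{-1}(M_{2}) \tilde\oplus U_{2}^{-1}(N_{2})$ with $U_{1},U_{2}$ isomorphisms, so that $[U_{1}(N_{1})]=[N_{1}]$ and $[U_{2}^{-1}(N_{2})]=[N_{2}]$ in $K(\mathcal{A})$, whence ${\rm index}\,G={\rm index}\,F$. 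Thus $t \mapsto {\rm index}\,f(t)$ is a locally constant map on the connected set $[0,1]$ (here we use that in case 3) the whole image lies in $\mathcal{M}\Phi(H_{\mathcal{A}})$), hence constant, giving ${\rm index}\,f(0)={\rm index}\,f(1)$. The argument is essentially formal; the only point requiring a little care is the local constancy of the index, which is precisely what that perturbation construction yields, so I do not anticipate a genuine obstacle.
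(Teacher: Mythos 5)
Your proposal is correct and follows essentially the same route as the paper: the disjoint decomposition of $\mathcal{M}\Phi_{\pm}(H_{\mathcal{A}})$ into the two sets opened by Theorem 4.1 and $\mathcal{M}\Phi(H_{\mathcal{A}})$ (open by \cite[Lemma 2.7.10]{MT}), followed by a connectedness argument. Your explicit verification of local constancy of the index via the perturbation decomposition with isomorphisms $U_{1},U_{2}$ is exactly the content the paper delegates to \cite[Lemma 2.7.10]{MT} and the proof of \cite[Lemma 1.6.1]{ZZRD}.
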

\begin{proof}
We have that $\mathcal{M}\Phi_{\pm}(H_{\mathcal{A}}) $ is a disjoint union of $ \mathcal{M}\Phi_{+}(H_{\mathcal{A}}) \setminus \mathcal{M}\Phi(H_{\mathcal{A}}) ,$\\
$\mathcal{M}\Phi_{-}(H_{\mathcal{A}}) \setminus \mathcal{M}\Phi(H_{\mathcal{A}}) $ and $\mathcal{M}\Phi(H_{\mathcal{A}}) .$ The first two sets are open by previous theorem whereas $\mathcal{M}\Phi(H_{\mathcal{A}}) $ is open by \cite[ Lemma 2.7.10] {MT}. Moreover, by assumption in the corollary, we have that $f([0,1]) \subseteq \mathcal{M}\Phi_{\pm}(H_{\mathcal{A}}) .$ Since $f$ is continuous by assumption, $f([0,1])$ $ $ must be connected in $B^{a}(H_{\mathcal{A}}) ,$ hence $f([0,1]) $ must be completely contained in one of these three sets $$ \mathcal{M}\Phi_{+}(H_{\mathcal{A}}) \setminus \mathcal{M}\Phi(H_{\mathcal{A}}) , \mathcal{M}\Phi_{-}(H_{\mathcal{A}}) \setminus \mathcal{M}\Phi(H_{\mathcal{A}}) $$ or $\mathcal{M}\Phi(H_{\mathcal{A}}) $(otherwise we would get a separation of $f([0,1]) $ which is impossible). \\
Thus $1),2)$ and the first part of $3)$ folows. For the second part of $3),$ use \cite[ Lemma 2.7.10] {MT} together with the proof of \cite[ Lemma 1.6.1] {ZZRD}.
\end{proof}
\section{$\mathcal{M}\Phi_{+}^{-}$ and $\mathcal{M}\Phi_{-}^{+}$ operators on $H_{\mathcal{A}}$}
In this section we construct certain classes of operators on $H_{\mathcal{A}}$ as a generalizations of classes $ \Phi_{+}^{-}(H_{\mathcal{A}})$ , $ \Phi_{-}^{+}(H_{\mathcal{A}})$  (where $H$ is a Hilbert space). Then we investigate then and prove several properties concerning these new classes of operators on $H_{\mathcal{A}}$.
\begin{definition}
Let $F \in \mathcal{M}\Phi (H_{\mathcal{A}})$. 
We say that $F \in \tilde {\mathcal{M}} \Phi_{+}^{-} (H_{\mathcal{A}})$ if there exists a decomposition 
$$H_{\mathcal{A}} = M_{1} \tilde \oplus {N_{1}}‎‎\stackrel{F}{\longrightarrow} M_{2} \tilde \oplus N_{2}= H_{\mathcal{A}} $$
with respect to which $F$ has the matrix
\begin{center}
	$\left\lbrack
	\begin{array}{ll}
	F_{1} & 0 \\
	0 & F_{4} \\
	\end{array}
	\right \rbrack,
	$
\end{center}
where $F_{1}$ is an isomorphism, $N_{1},N_{2}$ are closed, finitely generated and $N_{1} \preceq N_{2},$ that is $N_{1}$ is isomorphic to a closed submodule of $N_{2}$. We define similarly the class $\tilde {\mathcal{M}}\Phi_{-}^{+} (H_{\mathcal{A}})$, the only difference in this case is that $N_{2} \preceq N_{1}$. Then we set
$$\mathcal{M}\Phi_{+}^{-} (H_{\mathcal{A}})= (\tilde {\mathcal{M}} \Phi_{+}^{-} (H_{\mathcal{A}})) \cup (\mathcal{M}\Phi_{+} (H_{\mathcal{A}}) \setminus \mathcal{M}\Phi (H_{\mathcal{A}}))$$ 
and
$$\mathcal{M}\Phi_{-}^{+} (H_{\mathcal{A}})= (\tilde {\mathcal{M}}\Phi_{-}^{+} (H_{\mathcal{A}})) \cup (\mathcal{M}\Phi_{-} (H_{\mathcal{A}}) \setminus \mathcal{M}\Phi (H_{\mathcal{A}}))$$ 
\end{definition}
\begin{lemma}
Suppose that $K(\mathcal{A})$ satisfies "the cancellation property". If $F \in \tilde{\mathcal{M}\Phi}_{+}^{-}(H_{\mathcal{A}}),$ then for any decomposition 
$$H_{\mathcal{A}} = M_{1}^{\prime} \tilde \oplus {N_{1}^{\prime}}‎‎\stackrel{F}{\longrightarrow} M_{2}^{\prime} \tilde \oplus N_{2}^{\prime}= H_{\mathcal{A}} $$ with respect to which $F$ has the matrix
\begin{center}
	$\left\lbrack
	\begin{array}{ll}
	F_{1}^{\prime} & 0 \\
	0 & F_{4}^{\prime} \\
	\end{array}
	\right \rbrack,
	$
\end{center} 
where $F_{1}^{\prime}$ is an isomorphism, $N_{1}^{\prime} , N_{2}^{\prime}$ are finitely generated, we have $N_{1}^{\prime} \preceq N_{2}^{\prime}.$ Similarly $N_{1}^{\prime} \preceq N_{2}^{\prime}$ if $F \in \tilde {\mathcal{M}}\Phi_{-}^{+} (H_{\mathcal{A}}).$
\end{lemma}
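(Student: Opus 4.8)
The plan is to reduce the statement to the invariance of the index. By its definition $\tilde{\mathcal{M}\Phi}_{+}^{-}(H_{\mathcal{A}})\subseteq\mathcal{M}\Phi(H_{\mathcal{A}})$, so $F\in\mathcal{M}\Phi(H_{\mathcal{A}})$, and both displayed decompositions are $\mathcal{M}\Phi$ decompositions for $F$: in each of them both complementary modules are finitely generated and the corner map is an isomorphism. As is implicit in the definition of the index, the modules $N_1,N_2,N_1',N_2'$ are then finitely generated projective, being finitely generated complemented submodules of $H_{\mathcal{A}}$ (cf. \cite[Theorem 2.7.5]{MT}). By the well-definedness of the index \cite[Definition 2.7.9]{MT},
$$[N_1]-[N_2]={\rm index}\,F=[N_1']-[N_2']\quad\text{in }K(\mathcal{A}),$$
equivalently $[N_2]-[N_1]=[N_2']-[N_1']$.

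Next I would isolate the following characterization of $\preceq$, valid precisely because $K(\mathcal{A})$ satisfies the cancellation property: for finitely generated projective $\mathcal{A}$-modules $P$ and $Q$, one has $P\preceq Q$ if and only if $[Q]-[P]=[R]$ for some finitely generated projective $\mathcal{A}$-module $R$. For the forward implication, suppose $P$ is isomorphic to a closed submodule $P'\subseteq Q$; then $P'$ is finitely generated, hence orthogonally complementable in $H_{\mathcal{A}}$ by \cite[Lemma 2.3.7]{MT}, and since $P'\subseteq Q$ the orthogonal projection onto $(P')^{\perp}$ maps $Q$ into itself, so $Q=P'\oplus R$ with $R:=Q\cap(P')^{\perp}$; this $R$ is finitely generated (a direct summand of the finitely generated $Q$) and projective (a direct summand of the finitely generated projective $Q$), whence $[Q]=[P]+[R]$. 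For the reverse implication, realize $P\oplus R$ as a finitely generated closed submodule of $H_{\mathcal{A}}$ (every finitely generated projective $\mathcal{A}$-module is such); then $[Q]=[P\oplus R]$ together with the cancellation property gives $Q\cong P\oplus R$, so $P$ is isomorphic to the complemented, hence closed, submodule $P\oplus 0$ of $Q$, i.e.\ $P\preceq Q$.

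Granting this lemma, the proof closes at once. If $F\in\tilde{\mathcal{M}\Phi}_{+}^{-}(H_{\mathcal{A}})$ as witnessed by the first decomposition, then $N_1\preceq N_2$, so $[N_2]-[N_1]=[R]$ for some finitely generated projective $R$; combining with the index identity above, $[N_2']-[N_1']=[R]$ as well, whence the reverse implication of the lemma (here the cancellation hypothesis is used) gives $N_1'\preceq N_2'$. The case $F\in\tilde{\mathcal{M}\Phi}_{-}^{+}(H_{\mathcal{A}})$ is entirely symmetric: the defining decomposition satisfies $N_2\preceq N_1$, so $[N_1]-[N_2]=[R]$, hence $[N_1']-[N_2']=[R]$, hence $N_2'\preceq N_1'$.

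The only step that is not purely formal is the forward implication of the lemma, where an abstract embedding $P\hookrightarrow Q$ must be promoted to an honest orthogonal direct summand of $Q$ so that $[Q]-[P]$ is genuinely represented by a finitely generated projective module; this is exactly the place where finite generation is used, through the orthogonal complementability result \cite[Lemma 2.3.7]{MT}. I expect this to be the main point to get right, everything else being the cancellation property and the index invariance quoted from \cite{MT}.
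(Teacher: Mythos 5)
Your proposal is correct and follows essentially the same route as the paper: the paper likewise splits $N_{2}=N_{2,1}\oplus N_{2,2}$ using orthogonal complementability of the finitely generated submodule isomorphic to $N_{1}$, equates ${\rm index}\,F=-[N_{2,2}]=[N_{1}']-[N_{2}']$ by well-definedness of the index, and then invokes the cancellation property to get $N_{2}'\cong N_{1}'\tilde\oplus N_{2,2}$ and hence $N_{1}'\preceq N_{2}'$. The only difference is cosmetic: you package these two steps as a standalone characterization of $\preceq$ in terms of $K(\mathcal{A})$, while the paper carries them out inline.
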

\begin{proof}
Given $F \in {\tilde {\mathcal{M}\Phi}}_{+}^{-} (H_{\mathcal{A}}),$ choose a decomposition for $F$\\ 
$$H_{\mathcal{A}} = M_{1} \tilde \oplus {N_{1} }‎‎\stackrel{F}{\longrightarrow} M_{2} \tilde \oplus N_{2}= H_{\mathcal{A}} $$
as described in the definition above. Then $N_{1}\cong N_{2,1} \preceq N_{2}$ for some closed submodule $N_{2,1}$ of $N_{2}$. Since $N_{1}$ is finitely generated, so is $N_{2,1}$, therefore, $N_{2,1}$, is orthogonally complementable in $N_{2}.$ So $N_{2}=N_{2,1} \oplus N_{2,2}$ for some closed submodule $N_{2,2} $ of $ N_{2}.$\\
Hence $${\rm index} F=[N_{1}]-[N_{2}]=[N_{2,1}]-[N_{2,1}]-[N_{2,2}]=-[N_{2,2}].$$
Thus
$${\rm index} F=[N_{1}^{\prime}]-[N_{2}^{\prime}]=-[N_{2,2}].$$
Taking the inverses on both sides of the equality in $K (A)$, we get $$[N_{2}^{\prime}]-[N_{1}^{\prime}]=[N_{2,2}],$$ so $$[N_{2}^{\prime}]=[N_{1}^{\prime}]+[N_{2,2}].$$\
Since $$[N_{1}^{\prime}]+[N_{2,2}]=[N_{1}^{\prime} \oplus N_{2,2}]=[N_{2}^{\prime}],$$ it follows that $$(N_{1}^{\prime} \oplus N_{2,2})\cong N_{2}^{\prime}$$ 
as 	$K(\mathcal{A})$ satisfies "the cancellation property".\\
Let $\tilde{\iota}:N_{1}^{\prime} \oplus N_{2,2}^{\prime} \longrightarrow N_{2}^{\prime} $ be the isomorphism, then since $N_{1}^{\prime} \oplus \lbrace 0\rbrace$ is a closed submodule of $N_{1}^{\prime} \oplus N_{2,2}$, it follows that $\tilde{\iota}(N_{1}^{\prime} \oplus \lbrace 0 \rbrace)$ is a closed submodule of $N_{2}^{\prime}$. Thus $(N_{1}^{\prime} \oplus \lbrace 0\rbrace ) \preceq N_{2}^{\prime} $. But $N_{1}^{\prime} \oplus \lbrace 0\rbrace \cong N_{1}^{\prime}$, so $N_{1}^{\prime} \preceq N_{2}^{\prime}$. One treats analogsly the case when $F \in \tilde {\mathcal{M}}\Phi_{-}^{+} (H_{\mathcal{A}}). $
\end{proof}
\begin{lemma}
$\tilde{\mathcal{M}\Phi}_{+}^{-}(H_{\mathcal{A}}) $ and $\tilde{\mathcal{M}\Phi}_{-}^{+}(H_{\mathcal{A}})$ are semigroups under multiplication.
\end{lemma}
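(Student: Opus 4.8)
The plan is to reduce everything to the index, via the observation that, for $F\in\mathcal{M}\Phi(H_{\mathcal{A}})$, membership in $\tilde{\mathcal{M}\Phi}_{+}^{-}(H_{\mathcal{A}})$ is equivalent to ${\rm index}\,F=-[W]$ for some finitely generated projective $\mathcal{A}$-module $W$ (and, dually, $\tilde{\mathcal{M}\Phi}_{-}^{+}(H_{\mathcal{A}})$ corresponds to ${\rm index}\,F=+[W]$). One implication is already contained in the proof of Lemma 5.2: if $H_{\mathcal{A}}=M_{1}\tilde\oplus N_{1}\stackrel{F}{\longrightarrow}M_{2}\tilde\oplus N_{2}=H_{\mathcal{A}}$ is a defining decomposition with $N_{1}\preceq N_{2}$, pick a copy $N_{2,1}\cong N_{1}$ inside $N_{2}$; being finitely generated it is orthogonally complementable in $N_{2}$, so $N_{2}=N_{2,1}\tilde\oplus N_{2,2}$ and ${\rm index}\,F=[N_{1}]-[N_{2}]=-[N_{2,2}]$ with $N_{2,2}$ finitely generated projective — and this part uses no cancellation property. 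The converse is exactly the reconstruction carried out below.

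Given $F,G\in\tilde{\mathcal{M}\Phi}_{+}^{-}(H_{\mathcal{A}})$, first $FG\in\mathcal{M}\Phi(H_{\mathcal{A}})$, since $\mathcal{M}\Phi(H_{\mathcal{A}})$ is precisely the set of operators invertible in the Calkin algebra $B^{a}(H_{\mathcal{A}})/K(H_{\mathcal{A}})$ (as in the proof of Corollary 2.4) and a product of invertibles is invertible. By the previous paragraph ${\rm index}\,F=-[W_{F}]$ and ${\rm index}\,G=-[W_{G}]$ with $W_{F},W_{G}$ finitely generated projective, so by additivity of the index for $\mathcal{A}$-Fredholm operators (part of the Mishchenko--Fomenko theory) ${\rm index}(FG)=-[W]$ with $W:=W_{F}\oplus W_{G}$. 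Now choose any $\mathcal{M}\Phi$-decomposition $H_{\mathcal{A}}=M_{1}\tilde\oplus N_{1}\stackrel{FG}{\longrightarrow}M_{2}\tilde\oplus N_{2}=H_{\mathcal{A}}$; then $[N_{2}]=[N_{1}]+[W]$, hence there is $k\in\mathbb{N}$ with $N_{1}\oplus W\oplus\mathcal{A}^{k}\cong N_{2}\oplus\mathcal{A}^{k}$. Set $R:=W\oplus\mathcal{A}^{k}$. As in the proof of Lemma 2.16 (using \cite[Theorem 2.7.5]{MT}) one has $M_{1}\cong H_{\mathcal{A}}\oplus P$ for some finitely generated projective $P$, so $M_{1}$ contains a complementable submodule $\hat R\cong R$; write $M_{1}=\tilde M_{1}\tilde\oplus\hat R$. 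Since $FG$ restricts to an isomorphism of $M_{1}$ onto $M_{2}$, with respect to
$$H_{\mathcal{A}}=\tilde M_{1}\tilde\oplus(\hat R\tilde\oplus N_{1})\stackrel{FG}{\longrightarrow}FG(\tilde M_{1})\tilde\oplus(FG(\hat R)\tilde\oplus N_{2})=H_{\mathcal{A}}$$
$FG$ has a diagonal matrix whose upper-left corner is an isomorphism, and the new defect modules $N_{1}':=\hat R\tilde\oplus N_{1}$ and $N_{2}':=FG(\hat R)\tilde\oplus N_{2}$ are finitely generated. Finally $N_{1}'\cong R\oplus N_{1}=W\oplus\mathcal{A}^{k}\oplus N_{1}\cong N_{2}\oplus\mathcal{A}^{k}$, while $N_{2}'\cong R\oplus N_{2}\cong W\oplus(N_{2}\oplus\mathcal{A}^{k})\cong W\oplus N_{1}'$, so $N_{1}'$ is isomorphic to a complementable, in particular closed, submodule of $N_{2}'$, i.e. $N_{1}'\preceq N_{2}'$. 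Hence $FG\in\tilde{\mathcal{M}\Phi}_{+}^{-}(H_{\mathcal{A}})$. For $\tilde{\mathcal{M}\Phi}_{-}^{+}(H_{\mathcal{A}})$ one runs the same argument with the sign of the index reversed, or deduces it from the $\tilde{\mathcal{M}\Phi}_{+}^{-}$-case by passing to adjoints via Corollary 2.11, using that $F\in\tilde{\mathcal{M}\Phi}_{-}^{+}(H_{\mathcal{A}})$ iff $F^{*}\in\tilde{\mathcal{M}\Phi}_{+}^{-}(H_{\mathcal{A}})$ and $(FG)^{*}=G^{*}F^{*}$.

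The main obstacle is the reconstruction step, and the reason it is delicate is that $K(\mathcal{A})$ need not satisfy the cancellation property: the equality $[N_{1}']=[N_{2}\oplus\mathcal{A}^{k}]$ does \emph{not} by itself yield an isomorphism, let alone the relation $N_{1}'\preceq N_{2}'$ that we need. The device that makes the argument go through is to absorb the stabilizing free summand $\mathcal{A}^{k}$ — forced on us by the very definition of equality in $K(\mathcal{A})$ — into the enlarged defect module, by enlarging only $M_{1}$; since $FG|_{M_{1}}$ is an isomorphism this automatically enlarges $N_{2}$ by an isomorphic copy as well, and the bookkeeping is arranged so that $N_{2}'$ splits off a genuine copy of $N_{1}'$. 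Apart from this, the only ingredient not already among the lemmas available above is the additivity of the index, which is standard in this setting.
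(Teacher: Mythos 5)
Your proof is correct, but it follows a genuinely different route from the paper's. The paper never mentions the index: it composes the two given $\tilde{\mathcal{M}}\Phi_{+}^{-}$ decompositions using the explicit construction from the proof of \cite[Lemma 2.7.11]{MT}, which produces a decomposition for $DF$ with defect modules $\overline{\overline{N_{1}}}=U(F_{1}^{-1}(P)\tilde\oplus N_{1})$ and $\overline{\overline{N_{2}^{\prime}}}=D(P^{\prime})\tilde\oplus N_{2}^{\prime}$, and then chains embeddings of closed submodules directly: $F_{1}^{-1}(P)\tilde\oplus N_{1}\preceq P\tilde\oplus N_{2}\cong P^{\prime}\tilde\oplus N_{1}^{\prime}\preceq D(P^{\prime})\tilde\oplus N_{2}^{\prime}$. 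You instead characterize $\tilde{\mathcal{M}}\Phi_{+}^{-}(H_{\mathcal{A}})$ inside $\mathcal{M}\Phi(H_{\mathcal{A}})$ by the condition ${\rm index}\,F=-[W]$ with $W$ finitely generated projective, use additivity of the index for the product, and then reconstruct a witnessing decomposition by enlarging $M_{1}$ by a complementable copy of $W\oplus\mathcal{A}^{k}$ so that the stabilizing free summand is absorbed into the defect modules; the verification that the enlarged $N_{1}^{\prime}$, $N_{2}^{\prime}$ satisfy $N_{2}^{\prime}\cong W\oplus N_{1}^{\prime}$, hence $N_{1}^{\prime}\preceq N_{2}^{\prime}$, is sound and, importantly, avoids any cancellation hypothesis. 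What your approach buys is a clean, reusable criterion (membership is detected by the sign structure of the index alone, independent of the chosen decomposition), at the cost of invoking well-definedness and additivity of the Mishchenko--Fomenko index; the paper's argument is more elementary in that it only uses the composition lemma it already needs elsewhere. Two small points: for $\tilde{\mathcal{M}}\Phi_{-}^{+}$ you should prefer your sign-reversed direct argument over the adjoint shortcut, since the duality $F\in\tilde{\mathcal{M}}\Phi_{-}^{+}\Leftrightarrow F^{*}\in\tilde{\mathcal{M}}\Phi_{+}^{-}$ is only established later (Proposition 5.11, though its proof does not depend on this lemma, so no circularity arises); and the claim $M_{1}\cong H_{\mathcal{A}}\oplus P$ does follow, as you say, from \cite[Theorem 2.7.5]{MT} exactly as in the proof of Lemma 2.16, since $L_{n}^{\bot}\cong H_{\mathcal{A}}$.
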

\begin{proof}
Let $ F,D \in \tilde{\mathcal{M}\Phi}_{+}^{-}(H_{\mathcal{A}}) .$ Then there exist decompositions $$H_{\mathcal{A}} = M_{1} \tilde \oplus {N_{1}}‎‎\stackrel{F}{\longrightarrow} M_{2} \tilde \oplus N_{2}= H_{\mathcal{A}} $$ 
$$ H_{\mathcal{A}} = M_{1}^{\prime} \tilde \oplus {N_{1}^{\prime}}‎‎\stackrel{D}{\longrightarrow} M_{2}^{\prime} \tilde \oplus N_{2}^{\prime}= H_{\mathcal{A}} $$
with respect to which $F,D $ have matrices
$\left\lbrack
\begin{array}{ll}
F_{1} & 0 \\
0 & F_{4} \\
\end{array}
\right \rbrack ,
$
$\left\lbrack
\begin{array}{ll}
D_{1} & 0 \\
0 & D_{4} \\
\end{array}
\right \rbrack,
$
respectively, where $F_{1},D_{1} $ are isomorphisms, $N_{1},N_{2},N_{1}^{\prime},N_{2}^{\prime} $ are finitely generated and moreover $N_{1} \preceq N_{2}, N_{1}^{\prime} \preceq N_{2}^{\prime} .$ By the proof of \cite[ Lemma 2.7.11] {MT}, with respect to the decomposition 
$$H_{\mathcal{A}} = \overline{\overline{M_{1}}} \tilde \oplus \overline{\overline{N_{1}}}‎‎\stackrel{DF}{\longrightarrow} \overline{\overline{M_{2}^{\prime}}} \tilde \oplus \overline{\overline{N_{2}^{\prime}}} = H_{\mathcal{A}} $$ 
$DF$ has the matrix
$	\left\lbrack
\begin{array}{ll}
(DF)_{1} & 0 \\
0 & (DF)_{4} \\
\end{array}
\right \rbrack,
$
where $ (DF)_{1}$ is an isomorphism, $$\overline{\overline{N_{1}}}=U (F_{1}^{-1}(P)\tilde \oplus N_{1} ),\overline{\overline{N_{2}^{\prime}}}=D(P^{\prime}) \tilde \oplus N_{2}^{\prime}, (P \tilde \oplus N_{2} ) \cong (P \tilde \oplus N_{1}^{\prime} ) \cong L_{n}$$ for some $n$, $D_{\mid_{P}},F_{\mid_{P^{\prime}}} $ and $ U $ are isomorphisms. Since $N_{1} $ is isomorphic to a closed submodule of $N_{2}$ and $ F_{1}^{-1}(P) \cong P ,$ it follows that $ F_{1}^{-1}(P) \oplus N_{1} $ is isomorphic to a closed submodule of $P \oplus N_{2} $ (here we consider the direct sums of modules in the sense of \cite[ Example 1.3.4] {MT}). But since there are natural isomorphisms between ( $(F_{1}^{-1}(P)\tilde \oplus N_{1}) $) and ($ (F_{1}^{-1}(P) \oplus N_{1}) $), between ($ P \tilde \oplus N_{2} $) and ($ P \oplus N_{2} $), it follows that $ F_{1}^{-1}(P)\tilde \oplus N_{1} $ is isomorphic to a closed submodule of ($P \tilde \oplus N_{2} $). As $ U $ is an isomorphism, it follows that $\overline{\overline{N_{1}}}=U(F_{1}^{-1}(P)\tilde \oplus N_{1}) $ is isomorphic to a closed submodule of $ P \tilde \oplus N_{2} .$ Now, $ P \tilde \oplus N_{2} $ is isomorphic to $ P^{\prime} \tilde \oplus N_{1}^{\prime} ,$ so $\overline{\overline{N_{1}}} $ is isomorphic to a closed submodule of $ P^{\prime} \tilde \oplus N_{1}^{\prime} .$ Next, using that $ P^{\prime} \cong D(P^{\prime}) $ and that $ N_{1}^{\prime} $ is isomorphic to a closed submodule of $N_{2}^{\prime} ,$ by the same arguments as above (considering direct sums of modules), we can deduce that $ (P^{\prime} \tilde \oplus N_{1}^{\prime} ) $ is isomorphic to a closed submodule of
$(D(P^{\prime}) \tilde \oplus N_{2}^{\prime} ) = \overline{\overline{N_{2}^{\prime}}} ,$ so $ {\overline{\overline{N_{1}}}} \preceq ( P^{\prime} \tilde \oplus N_{1}^{\prime} ) \preceq \overline{\overline{N_{2}^{\prime}}}.$ Thus $DF \in \tilde{\mathcal{M}\Phi}_{-}^{+}(H_{\mathcal{A}}) .$ Similarly one can show that $\tilde{\mathcal{M}\Phi}_{-}^{+}(H_{\mathcal{A}})$ is a semigroup. 
\end{proof}
\begin{lemma}
$ \mathcal{M}\Phi_{+}^{-}(H_{\mathcal{A}}) $ and ${\mathcal{M}\Phi}_{-}^{+}(H_{\mathcal{A}})$ are semigroups under multiplication.
\end{lemma}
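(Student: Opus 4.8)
The plan is to reduce everything to three facts already in hand: Corollary~2.5 (that $\mathcal{M}\Phi_{+}(H_{\mathcal{A}})$ and $\mathcal{M}\Phi_{-}(H_{\mathcal{A}})$ are semigroups), Corollary~2.10 (that $\mathcal{M}\Phi_{+}(H_{\mathcal{A}}) \setminus \mathcal{M}\Phi(H_{\mathcal{A}})$ and $\mathcal{M}\Phi_{-}(H_{\mathcal{A}}) \setminus \mathcal{M}\Phi(H_{\mathcal{A}})$ are two-sided ideals in $\mathcal{M}\Phi_{+}(H_{\mathcal{A}})$ and $\mathcal{M}\Phi_{-}(H_{\mathcal{A}})$ respectively), and Lemma~5.3 (that $\tilde{\mathcal{M}}\Phi_{+}^{-}(H_{\mathcal{A}})$ and $\tilde{\mathcal{M}}\Phi_{-}^{+}(H_{\mathcal{A}})$ are semigroups). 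First I would record the inclusion $\mathcal{M}\Phi_{+}^{-}(H_{\mathcal{A}}) \subseteq \mathcal{M}\Phi_{+}(H_{\mathcal{A}})$: by Definition~5.1 one has $\tilde{\mathcal{M}}\Phi_{+}^{-}(H_{\mathcal{A}}) \subseteq \mathcal{M}\Phi(H_{\mathcal{A}}) \subseteq \mathcal{M}\Phi_{+}(H_{\mathcal{A}})$, while $\mathcal{M}\Phi_{+}(H_{\mathcal{A}}) \setminus \mathcal{M}\Phi(H_{\mathcal{A}}) \subseteq \mathcal{M}\Phi_{+}(H_{\mathcal{A}})$ trivially, so the union of the two pieces is contained in $\mathcal{M}\Phi_{+}(H_{\mathcal{A}})$. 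Consequently, for $F, D \in \mathcal{M}\Phi_{+}^{-}(H_{\mathcal{A}})$ Corollary~2.5 already gives $DF \in \mathcal{M}\Phi_{+}(H_{\mathcal{A}})$, and it only remains to decide which of the two defining pieces contains $DF$.

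I would then split into cases by the piece each factor comes from. If both $F$ and $D$ lie in $\tilde{\mathcal{M}}\Phi_{+}^{-}(H_{\mathcal{A}})$, then $DF \in \tilde{\mathcal{M}}\Phi_{+}^{-}(H_{\mathcal{A}})$ by Lemma~5.3. If, on the other hand, it is not the case that both lie in $\tilde{\mathcal{M}}\Phi_{+}^{-}(H_{\mathcal{A}})$, then at least one of $F, D$ lies in $\mathcal{M}\Phi_{+}(H_{\mathcal{A}}) \setminus \mathcal{M}\Phi(H_{\mathcal{A}})$; since by the first paragraph the other factor lies in $\mathcal{M}\Phi_{+}(H_{\mathcal{A}})$ and $\mathcal{M}\Phi_{+}(H_{\mathcal{A}}) \setminus \mathcal{M}\Phi(H_{\mathcal{A}})$ is a two-sided ideal in $\mathcal{M}\Phi_{+}(H_{\mathcal{A}})$ by Corollary~2.10, it follows that $DF \in \mathcal{M}\Phi_{+}(H_{\mathcal{A}}) \setminus \mathcal{M}\Phi(H_{\mathcal{A}})$. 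In either case $DF \in \mathcal{M}\Phi_{+}^{-}(H_{\mathcal{A}})$, which proves that $\mathcal{M}\Phi_{+}^{-}(H_{\mathcal{A}})$ is a semigroup. The statement for $\mathcal{M}\Phi_{-}^{+}(H_{\mathcal{A}})$ is obtained by the same reasoning, with $\tilde{\mathcal{M}}\Phi_{+}^{-}$, $\mathcal{M}\Phi_{+}$ replaced throughout by $\tilde{\mathcal{M}}\Phi_{-}^{+}$, $\mathcal{M}\Phi_{-}$ and invoking the corresponding halves of Lemma~5.3, Corollary~2.5 and Corollary~2.10.

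I do not expect a genuine obstacle here; the whole argument is bookkeeping over the union. The one point that deserves a moment's care is the mixed case, where one multiplies an element of $\tilde{\mathcal{M}}\Phi_{+}^{-}(H_{\mathcal{A}})$ — which is in particular $\mathcal{A}$-Fredholm — by a proper upper semi-$\mathcal{A}$-Fredholm operator, and one must be sure the product cannot accidentally become $\mathcal{A}$-Fredholm; this is precisely guaranteed by the ideal property in Corollary~2.10, which is why that corollary (rather than merely the semigroup property of $\mathcal{M}\Phi_{+}(H_{\mathcal{A}}) \setminus \mathcal{M}\Phi(H_{\mathcal{A}})$) is the right tool.
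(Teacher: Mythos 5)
Your proposal is correct and takes essentially the same route as the paper: both arguments reduce the claim to Corollary~2.5 (so $DF \in \mathcal{M}\Phi_{+}(H_{\mathcal{A}})$), Lemma~5.3 for the case of two factors in $\tilde{\mathcal{M}}\Phi_{+}^{-}(H_{\mathcal{A}})$, and the behaviour of the proper part $\mathcal{M}\Phi_{+}(H_{\mathcal{A}})\setminus\mathcal{M}\Phi(H_{\mathcal{A}})$. The only difference is cosmetic: you dispose of all cases with at least one proper factor uniformly via the two-sided ideal statement of Corollary~2.10, whereas the paper treats the two mixed cases separately by hand using Corollaries~2.9, 2.6 and 2.4, reserving Corollary~2.10 for the case of two proper factors; both versions are valid.
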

\begin{proof}
Let $ F,D \in \mathcal{M}\Phi_{+}^{-}(H_{\mathcal{A}}) .$ By definition, $ \mathcal{M}\Phi_{+}^{-}(H_{\mathcal{A}}) \subset \mathcal{M}\Phi_{+}(H_{\mathcal{A}}) ,$ so\\
$ F,D \in \mathcal{M}\Phi_{+}(H_{\mathcal{A}}) $ then. By  Corollary 2.5 $DF \in \mathcal{M}\Phi_{+}(H_{\mathcal{A}})   .$ Now, if $ F,D \in \tilde {\mathcal{M}}\Phi_{+}^{-}(H_{\mathcal{A}})  ,$ by the previous lemma it follows that $ DF \in \tilde {\mathcal{M}}\Phi_{+}^{-}(H_{\mathcal{A}}) .$ If $ D,F \in \mathcal{M}\Phi_{+}(H_{\mathcal{A}}) \setminus \mathcal{M}\Phi(H_{\mathcal{A}})  ,$ then $ DF \in \mathcal{M}\Phi_{+}(H_{\mathcal{A}}) \setminus \mathcal{M}\Phi(H_{\mathcal{A}}) $ as $\mathcal{M}\Phi_{+}(H_{\mathcal{A}}) \setminus \mathcal{M}\Phi(H_{\mathcal{A}}) $ is a semigroup by  Corollary 2.10.
If $F \in \mathcal{M}\Phi_{+}(H_{\mathcal{A}}) \setminus \mathcal{M}\Phi(H_{\mathcal{A}}) $ and $D \in \tilde {\mathcal{M}}\Phi_{+}^{-}(H_{\mathcal{A}}) ,$ then in particular $D \in \mathcal{M}\Phi(H_{\mathcal{A}}) $ as $\tilde {\mathcal{M}}\Phi_{+}^{-} (H_{\mathcal{A}}) \subseteq \mathcal{M}\Phi(H_{\mathcal{A}}) $ by definition. By  Corollary 2.9, it follows that $ DF $ can \underline{not} be in $\mathcal{M}\Phi(H_{\mathcal{A}}) $ as $F \notin \mathcal{M}\Phi(H_{\mathcal{A}}) .$ Since $DF \in \mathcal{M}\Phi_{+}(H_{\mathcal{A}}) ,$ we get that $ DF \in \mathcal{M}\Phi_{+}(H_{\mathcal{A}}) \setminus \mathcal{M}\Phi(H_{\mathcal{A}}) .$
If $ D \in \mathcal{M}\Phi_{+}(H_{\mathcal{A}}) \setminus \mathcal{M}\Phi(H_{\mathcal{A}}) ,$ it is clear that $DF$ can not be an element of $ \mathcal{M}\Phi(H_{\mathcal{A}})  .$ Indeed, if $ DF \in \mathcal{M}\Phi(H_{\mathcal{A}}) $ then by  Corollary 2.6 we would get that $D \in \mathcal{M}\Phi_{-}(H_{\mathcal{A}})  .$ Hence $D \in \mathcal{M}\Phi_{-}(H_{\mathcal{A}}) \cap \mathcal{M}\Phi_{+}(H_{\mathcal{A}} )) $ which is a contradiction as $\mathcal{M}\Phi_{-}(H_{\mathcal{A}}) \cap \mathcal{M}\Phi_{+}(H_{\mathcal{A}}) = \mathcal{M}\Phi(H_{\mathcal{A}}) $ by  Corollary 2.4. Collecting all these arguments together, we deduce that $\mathcal{M}\Phi_{+}^{-}(H_{\mathcal{A}}) $ is a semigroup. Similarly one can show that $\mathcal{M}\Phi_{-}^{+}(H_{\mathcal{A}}) $ is a semigroup.
\end{proof}
\begin{lemma}
$\tilde {\mathcal{M}}\Phi_{+}^{-}(H_{\mathcal{A}}) $ and $\tilde {\mathcal{M}}\Phi_{-}^{+}(H_{\mathcal{A}}) $ are open.
\end{lemma}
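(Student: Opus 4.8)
The plan is to re-run the perturbation argument already used in the proof of Theorem 4.1, but this time tracking the relation $N_{1}\preceq N_{2}$ rather than the property ``$N_{2}$ is not finitely generated''. So fix $F\in\tilde{\mathcal{M}}\Phi_{+}^{-}(H_{\mathcal{A}})$ together with a decomposition
$$H_{\mathcal{A}} = M_{1}\tilde\oplus N_{1}\stackrel{F}{\longrightarrow} M_{2}\tilde\oplus N_{2} = H_{\mathcal{A}}$$
with respect to which $F$ is diagonal with upper-left block $F_{1}$ an isomorphism, $N_{1},N_{2}$ closed and finitely generated, and $N_{1}\preceq N_{2}$; choose once and for all a closed submodule $N_{2,1}\subseteq N_{2}$ with $N_{2,1}\cong N_{1}$.

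First I would invoke the proof of \cite[Lemma 2.7.10]{MT} exactly as in the proof of Theorem 4.1: there is an $\epsilon>0$ such that for every $D\in B^{a}(H_{\mathcal{A}})$ with $\parallel D\parallel<\epsilon$ one finds isomorphisms $U_{1},U_{2}\in B^{a}(H_{\mathcal{A}})$ for which $F+D$ has a diagonal matrix with respect to
$$H_{\mathcal{A}} = U_{1}(M_{1})\tilde\oplus U_{1}(N_{1})\stackrel{F+D}{\longrightarrow} U_{2}^{-1}(M_{2})\tilde\oplus U_{2}^{-1}(N_{2}) = H_{\mathcal{A}},$$
with $(F+D)_{1}$ an isomorphism. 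Since $U_{1}$ and $U_{2}^{-1}$ are isomorphisms, $U_{1}(N_{1})$ and $U_{2}^{-1}(N_{2})$ are again closed and finitely generated, so this is an $\mathcal{M}\Phi$ decomposition and in particular $F+D\in\mathcal{M}\Phi(H_{\mathcal{A}})$.

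It then remains to check $U_{1}(N_{1})\preceq U_{2}^{-1}(N_{2})$. Here $U_{2}^{-1}(N_{2,1})$ is a closed submodule of $U_{2}^{-1}(N_{2})$, being the image of a closed submodule under an isomorphism, and $U_{2}^{-1}(N_{2,1})\cong N_{2,1}\cong N_{1}\cong U_{1}(N_{1})$, so $U_{1}(N_{1})$ is isomorphic to a closed submodule of $U_{2}^{-1}(N_{2})$. Hence the displayed decomposition is of the type required in Definition 5.1, which gives $F+D\in\tilde{\mathcal{M}}\Phi_{+}^{-}(H_{\mathcal{A}})$; thus $\tilde{\mathcal{M}}\Phi_{+}^{-}(H_{\mathcal{A}})$ is open. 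For $\tilde{\mathcal{M}}\Phi_{-}^{+}(H_{\mathcal{A}})$ one argues in the same way with the roles of the two ``$N$''-pieces interchanged, or one deduces openness from the first part: by (the proof of) Corollary 2.11 the map $F\mapsto F^{*}$ carries $\tilde{\mathcal{M}}\Phi_{+}^{-}(H_{\mathcal{A}})$ onto $\tilde{\mathcal{M}}\Phi_{-}^{+}(H_{\mathcal{A}})$, and it is an isometry of $B^{a}(H_{\mathcal{A}})$, hence a homeomorphism, so it preserves open sets.

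I expect the only genuine obstacle to be the bookkeeping in re-extracting the construction of $U_{1},U_{2}$ from \cite[Lemma 2.7.10]{MT} and verifying that finite generation of the ``$N$''-pieces survives the perturbation; everything involving the containment $N_{1}\preceq N_{2}$ is soft, since isomorphisms of Hilbert $C^{*}$-modules send closed submodules to closed submodules and preserve isomorphism type.
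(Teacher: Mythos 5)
Your proposal is correct and follows essentially the same route as the paper: both invoke the proof of \cite[Lemma 2.7.10]{MT} to obtain, for $\parallel F-D\parallel$ small, a diagonal decomposition of the perturbed operator whose ``$N$''-pieces are isomorphic (via the isomorphisms $U_{1},U_{2}$) to the original $N_{1},N_{2}$, and then transport the relation $N_{1}\preceq N_{2}$ along these isomorphisms, treating $\tilde{\mathcal{M}}\Phi_{-}^{+}(H_{\mathcal{A}})$ symmetrically. The only cosmetic difference is that the paper records the perturbed decomposition abstractly as $M_{1}'\tilde\oplus N_{1}'\to M_{2}'\tilde\oplus N_{2}'$ with $N_{i}\cong N_{i}'$ and composes $U_{2}\tilde\iota U_{1}$, which is the same bookkeeping you perform with $U_{2}^{-1}(N_{2,1})$.
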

\begin{proof}
Given $F \in \tilde {\mathcal{M}}\Phi_{+}^{-}(H_{\mathcal{A}}) $,
let 
$$H_{\mathcal{A}} = M_{1} \tilde \oplus {N_{1}}‎‎\stackrel{F}{\longrightarrow} M_{2} \tilde \oplus N_{2}= H_{\mathcal{A}} $$ be a decomposition, with respect to which
\begin{center}
	$F=\left\lbrack
	\begin{array}{ll}
	F_{1} & 0 \\
	0 & F_{4} \\
	\end{array}
	\right \rbrack,
	$
\end{center}
where $F_{1}$ is an isomorphism, $N_{1},N_{2}$ are finitely generated and $N_{1}\preceq N_{2}$. By the proof of \cite[ Lemma 2.7.10] {MT}, there exists an $\epsilon >0$ s.t. if $ \parallel F-D \parallel < \epsilon$, then there exists a decomposition $$H_{\mathcal{A}} = M_{1}^{\prime} \tilde \oplus {N_{1}^{\prime}}‎‎\stackrel{D}{\longrightarrow} M_{2}^{\prime} \tilde \oplus N_{2}^{\prime}= H_{\mathcal{A}} $$ with respect to which
\begin{center}
	$D=\left\lbrack
	\begin{array}{ll}
	D_{1} & 0 \\
	0 & D_{4} \\
	\end{array}
	\right \rbrack,
	$
\end{center}
where $D_{1}$ is an isomorphism, and moreover $$M_{1} \cong M_{1}^{\prime}, N_{1} \cong N_{1}^{\prime},M_{2} \cong M_{2}^{\prime} \textrm{ and } N_{2} \cong N_{2}^{\prime}. $$ 
Let $$U_{1}:N_{1}^{\prime} \rightarrow N_{1}, U_{2}:N_{2} \rightarrow N_{2}^{\prime}$$ be these isomorphisms. Since $N_{1}\preceq N_{2}$, there exists an isomorphism $\tilde{\iota}$ from $N_{1}$ onto some closed submodule $\tilde {\iota}(N_{1})\subseteq N_{2}$. Then $U_{2} \tilde {\iota} U_{1}$ is an isomorphism from $N_{1}^{\prime}$ onto $(U_{2} \tilde {\iota} U_{1})(N_{1})$ which is a closed submodule of $N_{2}^{\prime}.$\\
Thus $N_{1}^{\prime} \preceq N_{2}^{\prime}$ ( and also $N_{1}^{\prime} , N_{2}^{\prime} $ are finitely generated as $N_{1} , N_{2}$ are so). Therefore, $D \in \mathcal{M}\Phi_{+}^{-} (H_{\mathcal{A}})$. Similarly we can show that $\tilde{\mathcal{M}\Phi}_{-}^{+} (H_{\mathcal{A}})$ is open.
\end{proof}
\begin{definition}
Let $F \in \mathcal{M}\Phi_{+} (H_{\mathcal{A}}).$ We say that $ F \in {{\mathcal{M}\Phi}_{+}^{-}}^{\prime} (H_{\mathcal{A}})$ if there exists a decomposition $$H_{\mathcal{A}} = M_{1} \tilde \oplus {N_{1}}‎‎\stackrel{F}{\longrightarrow} M_{2} \tilde \oplus N_{2}= H_{\mathcal{A}} $$
with respect to which
\begin{center}
	$F=\left\lbrack
	\begin{array}{ll}
	F_{1} & 0 \\
	0 & F_{4} \\
	\end{array}
	\right \rbrack,
	$
\end{center}
where $F_{1}$ is an isomorphism, $N_{1}$ is closed, finitely generated and $N_{1} \preceq N_{2} .$ Similarly, we define the class ${\mathcal{M}\Phi_{-}^{+}}^{\prime} (H_{\mathcal{A}})$, only in this case $F \in \mathcal{M}\Phi_{-} (H_{\mathcal{A}})$, $N_{2}$ is finitely generated and $N_{2} \preceq N_{1} .$
\end{definition}
\begin{proposition}
$$\tilde {\mathcal{M}}\Phi_{+}^{-}(H_{\mathcal{A}}) = { \mathcal{M}\Phi_{+}^{-}}^{\prime}(H_{\mathcal{A}}) \cap \mathcal{M}\Phi(H_{\mathcal{A}}) , \tilde {\mathcal{M}}\Phi_{-}^{+}(H_{\mathcal{A}}) = { \mathcal{M}\Phi_{-}^{+}}^{\prime}(H_{\mathcal{A}}) \cap \mathcal{M}\Phi(H_{\mathcal{A}}) $$
\end{proposition}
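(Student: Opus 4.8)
The plan is to prove each of the two equalities by establishing the two inclusions separately; the inclusion $\tilde{\mathcal{M}}\Phi_{+}^{-}(H_{\mathcal{A}}) \subseteq {\mathcal{M}\Phi_{+}^{-}}^{\prime}(H_{\mathcal{A}}) \cap \mathcal{M}\Phi(H_{\mathcal{A}})$ (and its counterpart) is a formality, while the real content is the reverse inclusion. For the easy direction, one unwinds the definitions: an operator $F \in \tilde{\mathcal{M}}\Phi_{+}^{-}(H_{\mathcal{A}})$ lies, by the very definition of that class, in $\mathcal{M}\Phi(H_{\mathcal{A}})$ and hence in $\mathcal{M}\Phi_{+}(H_{\mathcal{A}})$, and any decomposition $H_{\mathcal{A}} = M_1 \tilde\oplus N_1 \stackrel{F}{\longrightarrow} M_2 \tilde\oplus N_2 = H_{\mathcal{A}}$ witnessing $F \in \tilde{\mathcal{M}}\Phi_{+}^{-}(H_{\mathcal{A}})$ — with $F$ diagonal, $F_1$ an isomorphism, $N_1$ finitely generated and $N_1 \preceq N_2$ — witnesses $F \in {\mathcal{M}\Phi_{+}^{-}}^{\prime}(H_{\mathcal{A}})$ once one discards the extra requirement that $N_2$ be finitely generated.

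For the reverse inclusion, suppose $F \in {\mathcal{M}\Phi_{+}^{-}}^{\prime}(H_{\mathcal{A}}) \cap \mathcal{M}\Phi(H_{\mathcal{A}})$. From $F \in {\mathcal{M}\Phi_{+}^{-}}^{\prime}(H_{\mathcal{A}})$ I would fix a decomposition $H_{\mathcal{A}} = M_1 \tilde\oplus N_1 \stackrel{F}{\longrightarrow} M_2 \tilde\oplus N_2 = H_{\mathcal{A}}$ with $F$ diagonal, $F_1$ an isomorphism, $N_1$ closed and finitely generated and $N_1 \preceq N_2$, where a priori $N_2$ is only closed; from $F \in \mathcal{M}\Phi(H_{\mathcal{A}})$ I would fix an $\mathcal{M}\Phi$-decomposition $H_{\mathcal{A}} = M_1' \tilde\oplus N_1' \stackrel{F}{\longrightarrow} M_2' \tilde\oplus N_2' = H_{\mathcal{A}}$, again diagonal, with $F_1'$ an isomorphism and both $N_1'$ and $N_2'$ closed and finitely generated. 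Now I would apply Lemma 2.16 with the $\mathcal{M}\Phi$-decomposition playing the role of the first decomposition in that lemma and the ${\mathcal{M}\Phi_{+}^{-}}^{\prime}$-decomposition the role of the second: the hypotheses hold since $N_1'$, $N_1$, $N_2'$ are all finitely generated while $N_2$ is just closed, so the lemma forces $N_2$ to be finitely generated. Hence the chosen ${\mathcal{M}\Phi_{+}^{-}}^{\prime}$-decomposition of $F$ now has $F$ diagonal, $F_1$ an isomorphism, both $N_1$ and $N_2$ closed finitely generated and $N_1 \preceq N_2$; since moreover $F \in \mathcal{M}\Phi(H_{\mathcal{A}})$, this is exactly what the definition of $\tilde{\mathcal{M}}\Phi_{+}^{-}(H_{\mathcal{A}})$ demands, so $F \in \tilde{\mathcal{M}}\Phi_{+}^{-}(H_{\mathcal{A}})$.

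The second equality is handled in the same way, with Lemma 2.17 in place of Lemma 2.16. If $F \in {\mathcal{M}\Phi_{-}^{+}}^{\prime}(H_{\mathcal{A}}) \cap \mathcal{M}\Phi(H_{\mathcal{A}})$, a witnessing decomposition has $F$ diagonal, $F_1$ an isomorphism, $N_2$ finitely generated, $N_2 \preceq N_1$ and $N_1$ only closed; since $F \in \mathcal{M}\Phi(H_{\mathcal{A}})$ and $N_2$ is finitely generated, Lemma 2.17 applies directly to this single decomposition and forces $N_1$ finitely generated, whence $F \in \tilde{\mathcal{M}}\Phi_{-}^{+}(H_{\mathcal{A}})$; the reverse inclusion is again immediate. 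Alternatively one could deduce the second equality from the first using Corollary 2.11, since passing to the adjoint interchanges $\tilde{\mathcal{M}}\Phi_{+}^{-}$ with $\tilde{\mathcal{M}}\Phi_{-}^{+}$ and ${\mathcal{M}\Phi_{+}^{-}}^{\prime}$ with ${\mathcal{M}\Phi_{-}^{+}}^{\prime}$ while preserving $\mathcal{M}\Phi(H_{\mathcal{A}})$. The one genuine step is the upgrade of the a priori merely closed corner submodule to a finitely generated one, and that is precisely what Lemmas 2.16 and 2.17 provide; everything else is bookkeeping with the definitions.
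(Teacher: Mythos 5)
Your proposal is correct and follows essentially the same route as the paper: the inclusion $\subseteq$ is read off from the definitions, and the reverse inclusion is obtained by pairing the ${\mathcal{M}\Phi_{+}^{-}}^{\prime}$-decomposition with an $\mathcal{M}\Phi$-decomposition and invoking Lemma 2.16 to upgrade the merely closed complement to a finitely generated one (the paper likewise handles the second equality "similarly, using Lemma 2.17," and your observation that Lemma 2.17 needs only the single witnessing decomposition is accurate).
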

\begin{proof}
By definition of $\tilde {\mathcal{M}}\Phi_{+}^{-}(H_{\mathcal{A}})  ,$ the inclusion $"\subseteq " $ is obvious. Let us show the other inclusion. To this end, choose some 
${D \in {{\mathcal{M}\Phi}_{+}^{-}}}^{\prime} (H_{\mathcal{A}}) \cap \mathcal{M}\Phi(H_{\mathcal{A}}) .$ Since ${D \in {{\mathcal{M}\Phi}_{+}^{-}}}^{\prime} (H_{\mathcal{A}}) ,$ there exists a decomposition 
$$ H_{\mathcal{A}} = M_{1}^{\prime} \tilde \oplus {N_{1}^{\prime}}‎‎\stackrel{D}{\longrightarrow} M_{2}^{\prime} \tilde \oplus N_{2}^{\prime}= H_{\mathcal{A}} $$ 
with respect to which $D$ has the matrix
$\left\lbrack
\begin{array}{ll}
D_{1} & 0 \\
0 & D_{4} \\
\end{array}
\right \rbrack,
$
where $ D_{1} $ is an isomorphism, $ N_{1}^{\prime} $ is finitely generated and $ N_{1}^{\prime} \preceq N_{2}^{\prime}  .$ On the other hand, since $D \in \mathcal{M}\Phi (H_{\mathcal{A}}) ,$ there exists a decomposition
$$ H_{\mathcal{A}} = M_{1} \tilde \oplus {N_{1}}‎‎\stackrel{D}{\longrightarrow} M_{2} \tilde \oplus N_{2}= H_{\mathcal{A}} $$
with respect to which
$D=\left\lbrack
\begin{array}{ll}
\tilde D_{1} & 0 \\
0 & \tilde D_{4} \\
\end{array}
\right \rbrack,
$ 
where $\tilde D_{1} $ is an isomorphism, $ N_{1}, N_{2} $ are finitely generated. By  Lemma 2.16, $N_{2}^{\prime} $ must be then finitely generated. Hence ${D \in {\tilde{\mathcal{M}\Phi}_{+}^{-}}}^{\prime} (H_{\mathcal{A}}) .$ Similarly, using  Lemma 2.17, one can show that $$\tilde {\mathcal{M}}\Phi_{-}^{+}(H_{\mathcal{A}}) = { \mathcal{M}\Phi_{-}^{+}}^{\prime}(H_{\mathcal{A}}) \cap \mathcal{M}\Phi(H_{\mathcal{A}}). $$
\end{proof}
\begin{remark}
${\mathcal{M}\Phi_{+}^{-}}^{\prime} \subseteq {\mathcal{M}\Phi_{+}^{-}}$ and ${\mathcal{M}\Phi_{-}^{+}}^{\prime} \subseteq {\mathcal{M}\Phi_{-}^{+}},$ and on Hilbert spaces "=" holds due to that given any finite dimensional subspace $N_{1}$ and infinite dimensional subspace $N_{2}$, then $N_{1}$ is isomorphic to a closed subspace of $N_{2}.$
\end{remark}
\begin{lemma}
The sets ${\mathcal{M}\Phi_{-}^{+}}^{\prime} (H_{\mathcal{A}})$ and ${\mathcal{M}\Phi_{+}^{-}}^{\prime} (H_{\mathcal{A}})$ are open. Moreover, if $F \in {\mathcal{M}\Phi_{+}^{-}}^{\prime}(H_{\mathcal{A}}) $ and $K \in K(H_{\mathcal{A}})$, then $$(F+K) \in {\mathcal{M}\Phi_{+}^{-}}^{\prime} (H_{\mathcal{A}}).$$ 
If $ F \in {\mathcal{M}\Phi_{-}^{+}}^{\prime}(H_{\mathcal{A}}) $ and $K \in K(H_{\mathcal{A}})$, then $$(F+K) \in {\mathcal{M}\Phi_{-}^{+}}^{\prime} (H_{\mathcal{A}}).$$ 
\end{lemma}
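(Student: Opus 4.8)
The plan is to reduce the two $ {\mathcal{M}\Phi_{-}^{+}}^{\prime} $-assertions to the corresponding $ {\mathcal{M}\Phi_{+}^{-}}^{\prime} $-assertions, then to prove openness of $ {\mathcal{M}\Phi_{+}^{-}}^{\prime}(H_{\mathcal{A}}) $ by the perturbation technique of Theorem 4.1 and its invariance under compact perturbations by a block-diagonalisation argument. For the reduction I would use that $ T\mapsto T^{*} $ is an isometric anti-isomorphism of $ B^{a}(H_{\mathcal{A}}) $ carrying $ K(H_{\mathcal{A}}) $ onto itself, and that (by the argument in the proof of Corollary 2.11, which produces mutually adjoint, diagonalised decompositions for $ F $ and $ F^{*} $) one has $ F\in {\mathcal{M}\Phi_{-}^{+}}^{\prime}(H_{\mathcal{A}}) $ if and only if $ F^{*}\in {\mathcal{M}\Phi_{+}^{-}}^{\prime}(H_{\mathcal{A}}) $: the finitely generated complement of $ F $ and the relation $ N_{2}\preceq N_{1} $ become the ``$ N_{1} $''-complement of $ F^{*} $ and the relation $ N_{1}\preceq N_{2} $. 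Granting this, openness and compact-stability of $ {\mathcal{M}\Phi_{-}^{+}}^{\prime}(H_{\mathcal{A}}) $ are immediate from the same properties of $ {\mathcal{M}\Phi_{+}^{-}}^{\prime}(H_{\mathcal{A}}) $, so it suffices to treat the $ {\mathcal{M}\Phi_{+}^{-}}^{\prime} $-case.

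For openness, take $ F\in {\mathcal{M}\Phi_{+}^{-}}^{\prime}(H_{\mathcal{A}}) $ with a decomposition $ H_{\mathcal{A}}=M_{1}\tilde\oplus N_{1}\stackrel{F}{\longrightarrow}M_{2}\tilde\oplus N_{2}=H_{\mathcal{A}} $ along which $ F=F_{1}\oplus F_{4} $ with $ F_{1} $ an isomorphism, $ N_{1} $ finitely generated and $ N_{1}\preceq N_{2} $. Arguing exactly as in the proof of Theorem 4.1 (itself modelled on \cite[Lemma 2.7.10]{MT}): since the isomorphisms form an open set and $ N_{1} $ is finitely generated, there is $ \varepsilon>0 $ such that for $ \|D\|<\varepsilon $ one finds isomorphisms $ U_{1},U_{2} $ of $ H_{\mathcal{A}} $ for which $ F+D $ is diagonal with isomorphic $ (1,1) $-entry along $ H_{\mathcal{A}}=U_{1}(M_{1})\tilde\oplus U_{1}(N_{1})\longrightarrow U_{2}^{-1}(M_{2})\tilde\oplus U_{2}^{-1}(N_{2})=H_{\mathcal{A}} $. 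Here $ U_{1}(N_{1})\cong N_{1} $ is finitely generated, and since $ U_{1}(N_{1})\cong N_{1}\preceq N_{2}\cong U_{2}^{-1}(N_{2}) $ we get $ U_{1}(N_{1})\preceq U_{2}^{-1}(N_{2}) $; as $ F+D\in\mathcal{M}\Phi_{+}(H_{\mathcal{A}}) $ as well (that set being the preimage of the open set of left-invertible elements of the Calkin algebra, by Theorem 2.2), this exhibits $ F+D\in {\mathcal{M}\Phi_{+}^{-}}^{\prime}(H_{\mathcal{A}}) $. Note that, unlike in the proof that $ \tilde{\mathcal{M}}\Phi_{+}^{-}(H_{\mathcal{A}}) $ is open, $ N_{2} $ is never required to be finitely generated.

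For the compact perturbation, let $ F\in {\mathcal{M}\Phi_{+}^{-}}^{\prime}(H_{\mathcal{A}}) $ and $ K\in K(H_{\mathcal{A}}) $, and fix a decomposition of $ F $ as above, which — using \cite[Lemma 2.3.7]{MT}, \cite[Theorem 2.7.6]{MT} and \cite[Theorem 2.3.3]{MT} — I would first normalise to be orthogonal, so that all relevant projections are adjointable and the block form $ K=[K_{ij}] $ has adjointable entries with $ K_{11}:M_{1}\to M_{2} $ compact. First, $ F+K\in\mathcal{M}\Phi_{+}(H_{\mathcal{A}}) $: by Theorem 2.2 there is $ D $ with $ DF=I+K_{0} $, $ K_{0} $ compact, whence $ D(F+K)=I+(K_{0}+DK) $ with $ K_{0}+DK $ compact, and Theorem 2.2 applies again. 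Next, since a complemented submodule of $ H_{\mathcal{A}} $ with finitely generated complement is isomorphic to $ H_{\mathcal{A}} $ (this follows from \cite[Theorem 2.7.5]{MT} and the Kasparov stabilisation theorem, just as in the proof of Lemma 2.16), one has $ M_{1}\cong M_{2}\cong H_{\mathcal{A}} $, so $ F_{1}+K_{11} $, a compact perturbation of the isomorphism $ F_{1} $, is $ \mathcal{A} $-Fredholm of index $ 0 $; hence it admits a decomposition $ M_{1}=A_{1}\tilde\oplus B_{1}\to A_{2}\tilde\oplus B_{2}=M_{2} $ making it diagonal with isomorphic $ (1,1) $-block and $ [B_{1}]=[B_{2}] $ in $ K(\mathcal{A}) $. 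Choosing a finitely generated projective $ R $ with $ B_{1}\oplus R\cong B_{2}\oplus R $ and transferring, across the isomorphic $ (1,1) $-block, a complemented copy of $ R $ from $ A_{1} $ to $ A_{2} $ — possible because $ A_{1} $ is again a complemented submodule of $ H_{\mathcal{A}} $ with finitely generated complement $ B_{1}\tilde\oplus N_{1} $, hence $ \cong H_{\mathcal{A}} $ and absorbs $ R $ — I may assume $ B_{1}\cong B_{2} $.

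Finally, with respect to $ H_{\mathcal{A}}=A_{1}\tilde\oplus B_{1}\tilde\oplus N_{1}\longrightarrow A_{2}\tilde\oplus B_{2}\tilde\oplus N_{2}=H_{\mathcal{A}} $, the off-diagonal entries of $ F+K $ in the block row and block column of $ A_{1},A_{2} $ are cleared by row and column operations built from the isomorphic $ (1,1) $-block, exactly as in the proof of \cite[Lemma 2.7.11]{MT}; afterwards $ F+K $ is block-diagonal $ \mathrm{diag}(\alpha,\Gamma) $ along a decomposition $ H_{\mathcal{A}}=\widetilde{A}_{1}\tilde\oplus(B_{1}\tilde\oplus\widetilde{N}_{1})\longrightarrow\widetilde{A}_{2}\tilde\oplus(B_{2}\tilde\oplus\widetilde{N}_{2})=H_{\mathcal{A}} $ with $ \alpha $ an isomorphism, $ \widetilde{A}_{i}\cong A_{i} $, $ \widetilde{N}_{i}\cong N_{i} $. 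This is an $ \mathcal{M}\Phi_{+} $-decomposition of $ F+K $ whose ``$ N_{1} $''-part $ B_{1}\tilde\oplus\widetilde{N}_{1} $ is finitely generated, and $ B_{1}\tilde\oplus\widetilde{N}_{1}\cong B_{1}\tilde\oplus N_{1}\cong B_{2}\tilde\oplus N_{1}\preceq B_{2}\tilde\oplus N_{2}\cong B_{2}\tilde\oplus\widetilde{N}_{2} $, using $ B_{1}\cong B_{2} $ and $ N_{1}\preceq N_{2} $; hence $ F+K\in {\mathcal{M}\Phi_{+}^{-}}^{\prime}(H_{\mathcal{A}}) $. The step I expect to be the main obstacle is precisely the one in the third paragraph: one is handed only the $ K $-theoretic equality $ [B_{1}]=[B_{2}] $, not an isomorphism $ B_{1}\cong B_{2} $ (cancellation in $ K(\mathcal{A}) $ is not assumed here), and it is the ``transfer a copy of $ R $'' manoeuvre — together with careful bookkeeping of the twisted complements produced while clearing the off-diagonal blocks — that must bridge this gap.
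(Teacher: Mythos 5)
Your openness argument is essentially the paper's own: both invoke the perturbation technique of \cite[Lemma 2.7.10]{MT} (as in Theorem 4.1) to produce, for $D$ close to $F$, a diagonalizing decomposition whose complements are isomorphic to $N_{1}$ and $N_{2}$, and then transport $N_{1}\preceq N_{2}$ through those isomorphisms, exactly as in Lemma 5.5. Where you genuinely diverge is the compact-perturbation statement. The paper disposes of it in one step: it quotes the proof of \cite[Lemma 2.7.13]{MT}, which, applied to the given $\mathcal{M}\Phi_{+}$-decomposition of $F$, already yields a decomposition of $F+K$ of the form $U_{1}^{\prime}(L_{n}^{\bot})\tilde\oplus U_{1}^{\prime}(P\tilde\oplus N_{1})\longrightarrow {U_{2}^{\prime}}^{-1}(F(L_{n}^{\bot}))\tilde\oplus {U_{2}^{\prime}}^{-1}(F(P)\tilde\oplus N_{2})$ with $P\cong F(P)$ finitely generated; since both complements are the old ones enlarged by \emph{literally isomorphic} pieces, $N_{1}\preceq N_{2}$ gives $P\tilde\oplus N_{1}\preceq F(P)\tilde\oplus N_{2}$ at once, and no index or cancellation question ever arises. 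Your route re-derives such a structural statement by hand: orthogonal normalization, the corner $F_{1}+K_{11}$ being $\mathcal{A}$-Fredholm of index $0$ on $M_{1}\cong M_{2}\cong H_{\mathcal{A}}$, the transfer of a copy of $R$ to upgrade the stable isomorphism $B_{1}\oplus R\cong B_{2}\oplus R$ to $B_{1}\cong B_{2}$ without assuming cancellation, and an \cite[Lemma 2.7.11]{MT}-style clearing of the off-diagonal blocks. I believe this works -- the $R$-transfer is the standard device behind ``index zero $=$ compact perturbation of an isomorphism'', and the twisted complements remain isomorphic to $B_{1}\tilde\oplus N_{1}$ and $B_{2}\tilde\oplus N_{2}$, so $\preceq$ passes through as you say -- but it is considerably longer and leans on several facts you only assert: that the identifications $M_{i}\cong H_{\mathcal{A}}$ obtained from \cite[Theorem 2.7.5]{MT} and Kasparov stabilization are adjointable (so that compactness of $K_{11}$ and the index make sense after transport), that after the orthogonal normalization $F$ is only upper triangular rather than diagonal (harmless, but it must be said), and that $B_{2}\tilde\oplus\iota(N_{1})$ is closed in $B_{2}\tilde\oplus N_{2}$; the paper's single citation packages all of this. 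Finally, your reduction of the ${\mathcal{M}\Phi_{-}^{+}}^{\prime}$ statements to the ${\mathcal{M}\Phi_{+}^{-}}^{\prime}$ ones via adjoints is legitimate and non-circular (it amounts to Proposition 5.11, part 1), whose proof does not use this lemma), but the paper simply runs the symmetric argument, which is no longer than proving the duality.
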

\begin{proof}
Suppose $F \in {\mathcal{M}\Phi_{+}^{-}}^{\prime} (H_{\mathcal{A}})$ and choose a decomposition.
$$H_{\mathcal{A}} = M_{1} \tilde \oplus {N_{1}}‎‎\stackrel{F}{\longrightarrow} M_{2} \tilde \oplus N_{2}= H_{\mathcal{A}} $$ such that $N_{1} \preceq N_{2} $ as described in the Definition 5.6. Then, again by the proof of \cite[ Lemma 2.7.10] {MT}, we have that there exists an $\epsilon >0$ such that if $\parallel F-D\parallel <\epsilon,$ then there exists a decomposition 
$$H_{\mathcal{A}} = M_{1}^{\prime} \tilde \oplus {N_{1}^{\prime}}‎‎\stackrel{D}{\longrightarrow} M_{2}^{\prime} \tilde \oplus N_{2}^{\prime}= H_{\mathcal{A}} $$ with respect to which $D$ has the matrix
\begin{center}
	$\left\lbrack
	\begin{array}{ll}
	D_{1} & 0 \\
	0 & D_{4} \\
	\end{array}
	\right \rbrack
	$
\end{center}
where $D_{1}$ is an isomorphism and $N_{1}^{\prime} \cong N_{1},N_{2}^{\prime} \cong N_{2}$. Therefore, by the same arguments as in the proof of  Lemma 5.5, we have $N_{1}^{\prime} \preceq N_{2}^{\prime} $ as $N_{1} \preceq N_{2} $. Thus $D$ is in ${\mathcal{M}\Phi_{+}^{-} (H_{\mathcal{A}})}^{\prime}$ also, so ${\mathcal{M}\Phi_{+}^{-} (H_{\mathcal{A}})}^{\prime}$ is open. Next, let $K \in K(H_{\mathcal{A}})$. By the proof of \cite[ Lemma 2.7.13] {MT} there exists an $L_{n} $ such that $F+K$ has the matrix
\begin{center}
	$\left\lbrack
	\begin{array}{ll}
	(F+K)_{1} & 0 \\
	0 & (F+K)_{4} \\
	\end{array}
	\right \rbrack
	$
\end{center}
with respect to the decomposition 
$$H_{\mathcal{A}} = U_{1}^{\prime}(L_{n}^{\bot}) \tilde \oplus {U_{1}^{\prime}(L_{n})}‎‎\stackrel{F+K}{\longrightarrow} {U_{2}^{\prime}}^{-1}(FL_{n}^{\bot}) \tilde \oplus {U_{2}^{\prime}}^{-1}(F(P) \tilde \oplus N_{2})= H_{\mathcal{A}},$$ 
where $(F+K)_{1}$ is an isomorphism, $L_{n}=N_{1} \tilde \oplus P$, $P=M_{1} \cap L_{n} $ and $P\cong F(P)$ for some closed, finitely generated submodule $P$ (here $F,N_{1},N_{2} $ are as given above). Now, since $N_{1}$ is isomorphic to a closed submodule of $N_{2},$ then clearly $ P \tilde \oplus N_{1}$ is isomorphic to a closed submodule of $ F(P) \tilde \oplus N_{2}$ as $P \cong F(P)$. Therefore, $$(P \tilde \oplus N_{1}) \preceq (F(P)\tilde \oplus N_{2}) .$$
Since $U_{1}^{\prime},U_{2}^{\prime}$ are isomorphisms, then 
$$U_{1}^{\prime}(L_{n}) = U_{1}^{\prime} (P \tilde \oplus N_{1}) \preceq {U_{2}^{\prime}}^{-1}(F(P) \tilde \oplus N_{2}), $$ 
so $(F+K) \in {\mathcal{M}\Phi_{+}^{-}}^{\prime} (H_{\mathcal{A}})$. Similarly one proves the statments for ${\mathcal{M}\Phi_{-}^{+}}^{\prime} (H_{\mathcal{A}}).$ 
\end{proof} 
\begin{theorem}
Let $F \in B^{a}(H_{\mathcal{A}})$. The following statements are equivalent\\
1)	$F \in {\mathcal{M}\Phi_{+}^{-}}^{\prime} (H_{\mathcal{A}})$\\
2)	There exist $D \in B^{a}(H_{\mathcal{A}}), K \in K(H_{\mathcal{A}})$ such that $D$ is bounded below and $F=D+K$
\end{theorem}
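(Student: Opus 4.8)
The plan is to prove the two implications separately; $2)\Rightarrow 1)$ is immediate from the preceding lemma, while $1)\Rightarrow 2)$ requires an explicit construction of $D$ and $K$.

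For $2)\Rightarrow 1)$ I would first observe that every bounded below $D\in B^{a}(H_{\mathcal{A}})$ already belongs to ${\mathcal{M}\Phi_{+}^{-}}^{\prime}(H_{\mathcal{A}})$. Since $D$ is bounded below, ${\rm ran}\,D$ is closed, hence orthogonally complementable by \cite[Theorem 2.3.3]{MT}, and $D$ maps $H_{\mathcal{A}}$ isomorphically onto ${\rm ran}\,D$; so with respect to $H_{\mathcal{A}} = H_{\mathcal{A}} \tilde{\oplus} \{0\} \stackrel{D}{\longrightarrow} {\rm ran}\,D \oplus ({\rm ran}\,D)^{\bot} = H_{\mathcal{A}}$ the operator $D$ has a diagonal matrix with isomorphic $(1,1)$-corner, $N_{1} = \{0\}$ is finitely generated, and trivially $\{0\} \preceq ({\rm ran}\,D)^{\bot}$; in particular $D \in {\mathcal{M}\Phi_{+}^{-}}^{\prime}(H_{\mathcal{A}})$. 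Writing $F = D + K$ with $K \in K(H_{\mathcal{A}})$, the preceding lemma then gives $F \in {\mathcal{M}\Phi_{+}^{-}}^{\prime}(H_{\mathcal{A}})$.

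For $1)\Rightarrow 2)$ I would start from a decomposition $H_{\mathcal{A}} = M_{1} \tilde{\oplus} N_{1} \stackrel{F}{\longrightarrow} M_{2} \tilde{\oplus} N_{2} = H_{\mathcal{A}}$ realizing $F \in {\mathcal{M}\Phi_{+}^{-}}^{\prime}(H_{\mathcal{A}})$, with $F = \left\lbrack \begin{array}{ll} F_{1} & 0 \\ 0 & F_{4} \end{array} \right\rbrack$, $F_{1}$ an isomorphism, $N_{1}$ finitely generated and $N_{1} \preceq N_{2}$. As $N_{1}$ is finitely generated it is orthogonally complementable, and, exactly as in the proof of Theorem 2.2, one passes to the orthogonal decomposition $H_{\mathcal{A}} = N_{1}^{\bot} \oplus N_{1} \stackrel{F}{\longrightarrow} F(N_{1}^{\bot}) \oplus F(N_{1}^{\bot})^{\bot} = H_{\mathcal{A}}$ with respect to which $F$ has an upper triangular matrix $\left\lbrack \begin{array}{ll} \tilde{F}_{1} & \tilde{F}_{2} \\ 0 & \tilde{F}_{4} \end{array} \right\rbrack$ with adjointable entries and $\tilde{F}_{1}$ an isomorphism. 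The key step is $N_{1} \preceq F(N_{1}^{\bot})^{\bot}$. I would get this by noting that for $x \in N_{1}^{\bot}$ the component of $Fx$ in $M_{2}$ along $M_{2} \tilde{\oplus} N_{2}$ is $F_{1} \sqcap_{M_{1}} x$, and since $\sqcap_{M_{1}}$ restricted to $N_{1}^{\bot}$ is an isomorphism onto $M_{1}$ while $F_{1}$ is an isomorphism, $\sqcap_{M_{2}}$ carries $F(N_{1}^{\bot})$ isomorphically onto $M_{2}$; hence $F(N_{1}^{\bot})$ is a topological complement of $N_{2}$ in $H_{\mathcal{A}}$. Since $F(N_{1}^{\bot})^{\bot}$ is another complement of $F(N_{1}^{\bot})$, and any two topological complements of a fixed closed submodule are isomorphic (via the restriction of the projection onto one along the other), we get $N_{2} \cong F(N_{1}^{\bot})^{\bot}$, so $N_{1} \preceq N_{2} \cong F(N_{1}^{\bot})^{\bot}$. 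I would then fix an embedding $j$ of $N_{1}$ onto a closed submodule of $F(N_{1}^{\bot})^{\bot}$; its image, being isomorphic to $N_{1}$, is finitely generated, hence orthogonally complementable, so $j$ is adjointable and bounded below, and $j \in K(N_{1}, H_{\mathcal{A}})$ because $N_{1}$ is finitely generated.

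To finish I would set $K := (j - \tilde{F}_{4}) P_{N_{1}}$, where $P_{N_{1}}$ is the orthogonal projection onto $N_{1}$. Then $K$ is adjointable and factors through the finitely generated module $N_{1}$, so $K \in K(H_{\mathcal{A}})$, and with respect to the orthogonal decomposition above $F + K$ has the matrix $\left\lbrack \begin{array}{ll} \tilde{F}_{1} & \tilde{F}_{2} \\ 0 & j \end{array} \right\rbrack = \left\lbrack \begin{array}{ll} \tilde{F}_{1} & 0 \\ 0 & j \end{array} \right\rbrack \left\lbrack \begin{array}{ll} 1 & \tilde{F}_{1}^{-1} \tilde{F}_{2} \\ 0 & 1 \end{array} \right\rbrack$, i.e. a bounded below operator (block diagonal with bounded below blocks, both decompositions being orthogonal) composed with an isomorphism, hence bounded below. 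Setting $D := F + K$, we have $F = D + (-K)$ with $D$ bounded below and $-K \in K(H_{\mathcal{A}})$, which is $2)$. The main obstacle is the identification $N_{2} \cong F(N_{1}^{\bot})^{\bot}$, that is, transferring the relation $N_{1} \preceq N_{2}$ from the given, possibly non-orthogonal, decomposition to the orthogonal decomposition one can actually compute with; once that is available, producing $K$ and checking that $F + K$ is bounded below is routine.
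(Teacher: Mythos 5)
Your proof is correct, and in substance it follows the same strategy as the paper: for $2)\Rightarrow 1)$ the paper likewise observes that a bounded below operator trivially lies in ${\mathcal{M}\Phi_{+}^{-}}^{\prime}(H_{\mathcal{A}})$ and then invokes the preceding lemma on compact perturbations; for $1)\Rightarrow 2)$ both arguments perturb $F$ by a compact operator supported on the finitely generated module $N_{1}$, replacing the action of $F$ on $N_{1}$ by an embedding of $N_{1}$ into a complement of $F(N_{1}^{\bot})$. The difference is where the hypothesis $N_{1}\preceq N_{2}$ is used: you pass to the orthogonal range decomposition $F(N_{1}^{\bot})\oplus F(N_{1}^{\bot})^{\bot}$, which forces you to add the observation that $N_{2}\cong F(N_{1}^{\bot})^{\bot}$ (any two topological complements of the closed submodule $F(N_{1}^{\bot})$ are isomorphic via the obvious projections) and then a triangular factorization plus an orthogonality argument to see that $F+K$ is bounded below. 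The paper instead keeps $N_{2}$ itself as the range complement, i.e.\ works with the diagonal decomposition $H_{\mathcal{A}}=N_{1}^{\bot}\tilde\oplus N_{1}\stackrel{F}{\longrightarrow}F(N_{1}^{\bot})\tilde\oplus N_{2}$, and defines $D=FP_{N_{1}^{\bot}}+\iota P_{N_{1}}$ with $\iota:N_{1}\rightarrow\iota(N_{1})\subseteq N_{2}$, so that $D$ is visibly an isomorphism of $H_{\mathcal{A}}$ onto the closed submodule $F(N_{1}^{\bot})\tilde\oplus\iota(N_{1})$ and $F-D=(F-\iota)P_{N_{1}}$ is compact; this choice of decomposition makes both of your extra steps unnecessary. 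Your justification that $j$ is adjointable and that $K=(j-\tilde F_{4})P_{N_{1}}$ is compact (self-duality of the finitely generated $N_{1}$, orthogonal complementability of $j(N_{1})$, factoring through $N_{1}$) is exactly the paper's argument for $\iota P_{N_{1}}$, so the two proofs differ only in this technical detour, not in the underlying idea.
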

\begin{proof}
$1) \rightarrow 2$)\\
Let $F \in {\mathcal{M}\Phi_{+}^{-}}^{\prime} (H_{\mathcal{A}})$ and let
$$H_{\mathcal{A}} = M_{1} \tilde \oplus {N_{1}}‎‎\stackrel{F}{\longrightarrow} M_{2} \tilde \oplus N_{2}= H_{\mathcal{A}} $$ 
be a decomposition as given in the Definition 5.6, so that $N_{1}$ is finitely generated, $N_{1} \preceq N_{2},$ and $F_{\mid_{M_{1}}}\rightarrow M_{2}$ is an isomorphism. Since $ N_{1}$ is finitely generated, by the proof of \cite[ Theorem 2.7.6] {MT}, we may assume that $ M_{1}=N_{1}^{\bot}.$ Let $\iota$ be the isomorphism from $N_{1}$ onto a closed submodule $\iota(N_{1}) \subseteq N_{2}.$ Set $D=F+(\iota-F)P_{N_{1}}$, where $P_{N_{1}}$ is the orthogonal projection onto $N_{1}.$ Then $(\iota-F)P_{N_{1}}$ is in $K (H_{\mathcal{A}})$ and in addition $D = F+ (\iota-F)P_{N_{1}}=FP_{M_{1}}+\iota P_{N_{1}}.$ Since $F_{{\mid}_{{M}_{1}}}$ is an isomorphism from $M_{1}$ onto $M_{2}$, $\iota$ is an isomorphism from $N_{1}$ onto $\iota(N_{1})\subseteq N_{2}$ and $H_{\mathcal{A}} = M_{2} \tilde \oplus N_{2},$ it follows that $D$ is bounded below being an isomorphism of $H_\mathcal{A} $ onto $M_{2} \tilde \oplus \iota (N_{1})$ which is a closed submodule of $H_{\mathcal{A}}.$ Moreover $F=D + (F-\iota)P_{N_{1}}$ and $(F-\iota)P_{N_{1}}$ is compact. Note that $\iota{P_{N_{1}}}$ is indeed adjointable: Since $\iota :N_{1}\rightarrow \iota (N_{1})\subseteq N_{2}$ and $N_{1}$ is self-dual being finitely generated, then by \cite[ Proposition 2.5.2] {MT}, the result which was originally proved in \cite{P}, $\iota$ is adjointable. Moreover, since $\iota (N_{1})$ is finitely generated being isomorphic to $N_{1},$ it follows that $ \iota (N_{1})$ is an orthogonal direct summand in $H_{\mathcal{A}}$ by \cite[ Lemma 2.3.7] {MT}. Hence the inclusion $J_{\iota (N_{1})}:\iota (N_{1})\rightarrow H_{\mathcal{A}}$ is adjointable. Also $P_{N_{1}}$ is adjointable, so $\iota P_{N_{1}}=J_{\iota (N_{1})}\iota P_{N_{1}} \in B^{a}(H_{\mathcal{A}}).$\\
$2)\Rightarrow 1)$\\
If $D \in B^{a}(H_{\mathcal{A}})$ is bounded below, then obviously $D \in {\mathcal{M}\Phi_{+}^{-}}^{\prime}(H_{\mathcal{A}}) .$ As $K \in K(\mathcal{A}),$ by previous lemma we get that $(D+K) \in {\mathcal{M}\Phi_{+}^{-}}^{\prime}(H_{\mathcal{A}}). $
\end{proof}
\begin{proposition}
1)$F \in {\mathcal{M}\Phi_{+}^{-}}^{\prime} (H_{\mathcal{A}}) \Leftrightarrow F^{*} \in {\mathcal{M}\Phi_{-}^{+}}^{\prime} (H_{\mathcal{A}})$\\
2) $ F \in {{\tilde{\mathcal{M}\Phi}}_{+}^{-}} (H_{\mathcal{A}}) \Leftrightarrow {F}^{*} \in {\tilde{\mathcal{M}\Phi}}_{-}^{+} (H_{\mathcal{A}}) $\\
3) $ F \in {\mathcal{M}\Phi_{+}^{-}} (H_{\mathcal{A}}) \Leftrightarrow F^{*} \in {\mathcal{M}\Phi_{-}^{+}} (H_{\mathcal{A}}) $
\end{proposition}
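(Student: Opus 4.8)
The plan is to prove 1) directly by a matrix-transposition argument carried out at the level of a single decomposition, and then to read off 2) and 3) as purely formal consequences of 1), of Corollary 2.11, of the proposition above stating $\tilde{\mathcal{M}\Phi}_{+}^{-}(H_{\mathcal{A}}) = {\mathcal{M}\Phi_{+}^{-}}^{\prime}(H_{\mathcal{A}}) \cap \mathcal{M}\Phi(H_{\mathcal{A}})$, and of the defining unions $\mathcal{M}\Phi_{+}^{-}(H_{\mathcal{A}}) = \tilde{\mathcal{M}\Phi}_{+}^{-}(H_{\mathcal{A}}) \cup (\mathcal{M}\Phi_{+}(H_{\mathcal{A}}) \setminus \mathcal{M}\Phi(H_{\mathcal{A}}))$ and $\mathcal{M}\Phi_{-}^{+}(H_{\mathcal{A}}) = \tilde{\mathcal{M}\Phi}_{-}^{+}(H_{\mathcal{A}}) \cup (\mathcal{M}\Phi_{-}(H_{\mathcal{A}}) \setminus \mathcal{M}\Phi(H_{\mathcal{A}}))$. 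For the forward direction of 1), suppose $F \in {\mathcal{M}\Phi_{+}^{-}}^{\prime}(H_{\mathcal{A}})$ and fix a realising decomposition $H_{\mathcal{A}} = M_{1} \tilde\oplus N_{1} \stackrel{F}{\longrightarrow} M_{2} \tilde\oplus N_{2} = H_{\mathcal{A}}$, so that $F = \mathrm{diag}(F_{1},F_{4})$ with $F_{1}\colon M_{1}\to M_{2}$ an isomorphism, $N_{1}$ finitely generated and $N_{1}\preceq N_{2}$. First I would pass to orthogonal complements on the domain side: since $N_{1}$ is finitely generated it is orthogonally complementable, and, arguing exactly as in the proof of Theorem 2.2 part $1)\Rightarrow 2)$, the restriction $F|_{N_{1}^{\bot}}$ is bounded below, hence an isomorphism onto the closed submodule $F(N_{1}^{\bot})$, which by \cite[Theorem 2.3.3]{MT} is orthogonally complementable, so $H_{\mathcal{A}} = F(N_{1}^{\bot})\oplus F(N_{1}^{\bot})^{\bot}$. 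The key extra observation I need is that $N_{2}$ is itself a topological complement of $F(N_{1}^{\bot})$: writing $x\in N_{1}^{\bot}$ as $x=m+n$ along $M_{1}\tilde\oplus N_{1}$ gives $Fx=F_{1}m+F_{4}n$ with $F_{1}m\in M_{2}$ and $F_{4}n\in N_{2}$, whence $F(N_{1}^{\bot})\cap N_{2}=0$ and $F(N_{1}^{\bot})+N_{2}=H_{\mathcal{A}}$ follow by the standard two-line verifications; so $H_{\mathcal{A}} = F(N_{1}^{\bot})\tilde\oplus N_{2}$, and since $F(N_{1}^{\bot})$ is orthogonally complementable this forces $N_{2}\cong F(N_{1}^{\bot})^{\bot}$.

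With respect to $H_{\mathcal{A}} = N_{1}^{\bot}\oplus N_{1} \stackrel{F}{\longrightarrow} F(N_{1}^{\bot})\oplus F(N_{1}^{\bot})^{\bot} = H_{\mathcal{A}}$ the matrix of $F$ is upper triangular with invertible, adjointable $(1,1)$-entry, so $F^{*}$ is lower triangular with invertible $(1,1)$-entry with respect to $H_{\mathcal{A}} = F(N_{1}^{\bot})\oplus F(N_{1}^{\bot})^{\bot} \stackrel{F^{*}}{\longrightarrow} N_{1}^{\bot}\oplus N_{1} = H_{\mathcal{A}}$. Diagonalising on the codomain side exactly as in the proof of \cite[Lemma 2.7.11]{MT} yields an isomorphism $V$ of $H_{\mathcal{A}}$ and a decomposition $H_{\mathcal{A}} = F(N_{1}^{\bot})\oplus F(N_{1}^{\bot})^{\bot} \stackrel{F^{*}}{\longrightarrow} V^{-1}(N_{1}^{\bot})\tilde\oplus V^{-1}(N_{1}) = H_{\mathcal{A}}$ with respect to which $F^{*}=\mathrm{diag}((F^{*})_{1},(F^{*})_{4})$, $(F^{*})_{1}$ an isomorphism and $V^{-1}(N_{1})\cong N_{1}$ finitely generated. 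Since $V^{-1}(N_{1})\cong N_{1}\preceq N_{2}\cong F(N_{1}^{\bot})^{\bot}$, the finitely generated range-side summand $V^{-1}(N_{1})$ is isomorphic to a closed submodule of the domain-side summand $F(N_{1}^{\bot})^{\bot}$; this is precisely an ${\mathcal{M}\Phi_{-}^{+}}^{\prime}$-decomposition for $F^{*}$, so $F^{*}\in{\mathcal{M}\Phi_{-}^{+}}^{\prime}(H_{\mathcal{A}})$. The reverse implication is the same argument run for $F^{*}$ in place of $F$, with the two direct summands interchanged — this is exactly the symmetry built into the definitions of ${\mathcal{M}\Phi_{+}^{-}}^{\prime}$ and ${\mathcal{M}\Phi_{-}^{+}}^{\prime}$ — together with $F^{**}=F$. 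This establishes 1).

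Statements 2) and 3) are then formal. For 2): by the proposition above, $F\in\tilde{\mathcal{M}\Phi}_{+}^{-}(H_{\mathcal{A}})$ iff $F\in{\mathcal{M}\Phi_{+}^{-}}^{\prime}(H_{\mathcal{A}})$ and $F\in\mathcal{M}\Phi(H_{\mathcal{A}})$; by 1) together with Corollary 2.11 (which, being an equivalence via $F=F^{**}$, also gives $F\in\mathcal{M}\Phi(H_{\mathcal{A}})$ iff $F^{*}\in\mathcal{M}\Phi(H_{\mathcal{A}})$) this holds iff $F^{*}\in{\mathcal{M}\Phi_{-}^{+}}^{\prime}(H_{\mathcal{A}})$ and $F^{*}\in\mathcal{M}\Phi(H_{\mathcal{A}})$, i.e. iff $F^{*}\in\tilde{\mathcal{M}\Phi}_{-}^{+}(H_{\mathcal{A}})$. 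For 3): using the defining unions recalled above, together with Corollary 2.11 (which gives $F\in\mathcal{M}\Phi_{+}(H_{\mathcal{A}})\setminus\mathcal{M}\Phi(H_{\mathcal{A}})$ iff $F^{*}\in\mathcal{M}\Phi_{-}(H_{\mathcal{A}})\setminus\mathcal{M}\Phi(H_{\mathcal{A}})$) and 2) (which handles the $\tilde{\mathcal{M}\Phi}$ summands), $F\in\mathcal{M}\Phi_{+}^{-}(H_{\mathcal{A}})$ iff $F^{*}\in\tilde{\mathcal{M}\Phi}_{-}^{+}(H_{\mathcal{A}})$ or $F^{*}\in\mathcal{M}\Phi_{-}(H_{\mathcal{A}})\setminus\mathcal{M}\Phi(H_{\mathcal{A}})$, i.e. iff $F^{*}\in\mathcal{M}\Phi_{-}^{+}(H_{\mathcal{A}})$.

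The only place where anything actually has to be checked is 1), and within it the one delicate point is the identification $N_{2}\cong F(N_{1}^{\bot})^{\bot}$ obtained after passing from the (non-unique, possibly non-orthogonal) ${\mathcal{M}\Phi_{+}^{-}}^{\prime}$-decomposition to an orthogonal one: I must be sure that the finitely generated summand appearing in the new decomposition for $F^{*}$ is isomorphic to the original $N_{1}$ and that the complementary summand is isomorphic to the original $N_{2}$, so that the relation $N_{1}\preceq N_{2}$ transfers verbatim. This is exactly where finite generation of $N_{1}$ — hence its orthogonal complementability and the adjointability supplied by \cite[Theorem 2.3.3]{MT} and \cite[Theorem 2.7.6]{MT} — is essential, very much in the spirit of the proof of Corollary 2.11.
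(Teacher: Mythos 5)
Your proof is correct and follows essentially the same route as the paper's: use finite generation of $N_{1}$ to pass to the orthogonal decomposition $N_{1}^{\bot}\oplus N_{1}$ (via \cite[Theorem 2.7.6]{MT} and \cite[Theorem 2.3.3]{MT}), take adjoints, diagonalise as in \cite[Lemma 2.7.11]{MT}, transport the relation $\preceq$ through the resulting isomorphisms, and then obtain 2) and 3) formally from 1), Corollary 2.11, Proposition 5.7 and Definition 5.1. The only differences are cosmetic: you make explicit the identification $H_{\mathcal{A}}=F(N_{1}^{\bot})\tilde\oplus N_{2}$ and hence $N_{2}\cong F(N_{1}^{\bot})^{\bot}$ before transferring $N_{1}\preceq N_{2}$, and you treat the converse of 1) as the mirrored computation (which the paper writes out in full, using the proof of Theorem 2.3 with the finitely generated summand on the codomain side) rather than repeating it.
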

\begin{proof}
1)	Let $F \in {\mathcal{M}\Phi_{-}^{+}}^{\prime} (H_{\mathcal{A}})$, choose a decomposition $$H_{\mathcal{A}} = M_{1} \tilde \oplus N_{1} \stackrel{F}{\longrightarrow} M_{2} \tilde \oplus N_{2}= H_{\mathcal{A}} $$ 
w.r.t which $F$ has the matrix 
\begin{center}
	$\left\lbrack
	\begin{array}{ll}
	F_{1} & 0 \\
	0 & F_{4} \\
	\end{array}
	\right \rbrack
	,$
\end{center} 
where $F_{1}$ is an isomorphism, $N_{1} \preceq N_{2}$ and $N_{1}$ is finitely generated. Again, by the proof of \cite[Theorem 2.7.6] {MT}, we may assume that $M_{1}=N_{1}^{\bot}. $ W.r.t. the decomposition
$$H_{\mathcal{A}} = {N_{1}}^{\bot} \tilde \oplus {N_{1}}‎\stackrel{F}{\longrightarrow} F(N_{1}^{\bot})  \oplus F(N_{1}^{\bot})^{\bot}=H_{\mathcal{A}},$$ 
$F$ has the matrix
\begin{center}
	$\left\lbrack
	\begin{array}{ll}
	\tilde F_{1} & \tilde F_{2} \\
	0 &  \tilde F_{4} \\
	\end{array}
	\right \rbrack
	,$
\end{center}
where $\tilde F_{1}$ is an isomorphism and $\tilde F_{1},\tilde F_{2} ,\tilde F_{4} $ are adjointable, so
\begin{center}
	$F^{*}=\left\lbrack
	\begin{array}{ll}
	\tilde F_{1}^{*} & 0 \\
	\tilde F_{2}^{*} &  \tilde F_{4}^{*} \\
	\end{array}
	\right \rbrack
	$
\end{center}	
w.r.t. the decomposition $$H_{\mathcal{A}} = {F(N_{1}^{\bot})  \oplus F(N_{1}^{\bot})^{\bot}}‎\stackrel{F^{*}}{\longrightarrow} {N_{1}}^{\bot} \tilde \oplus N_{1}=H_{\mathcal{A}}.$$  
Moreover, since $\tilde F_{1}^{*} $ is an isomorphism, $ F^{*} $ has the matrix
\begin{center}
	$\left\lbrack
	\begin{array}{ll}
	\tilde  {\tilde F_{1}^{*}} & 0 \\
	0 &  \tilde F_{4}^{*} \\
	\end{array}
	\right \rbrack
	$
\end{center} 
w.r.t. the decomposition 
$$H_{\mathcal{A}} = {N_{2}}^{\bot} \tilde \oplus N_{2} ‎\stackrel{F^{*}}{\longrightarrow} V^{-1}({N_{1}}^{\bot}) \tilde \oplus V^{-1}(N_{1})=H_{\mathcal{A}},$$  
where
\begin{center}
	$V=\left\lbrack
	\begin{array}{ll}
	1 & 0 \\
	- \tilde F_{2}^{*}(\tilde F_{1}^{*})^{-1} &  1 \\
	\end{array}
	\right \rbrack
	$
\end{center} 
w.r.t the decomposition 
$$H_{\mathcal{A}} = {N_{1}}^{\bot} \oplus N_{1} ‎\stackrel{V}{\longrightarrow} {N_{1}}^{\bot} \oplus N_{1}=H_{\mathcal{A}},$$  
so that $V$ is an isomorphism and also $\tilde{\tilde{F}}_{1}^{*}$ is  an isomorphism. Now, since $V$ is an isomorphism and there exists an isomorphism  $\iota :N_{1}\rightarrow \iota(N_{1})\subseteq N_{2}$  (as $N_{1} \preceq N_{2} $), we get that $\iota V:V^{-1}(N_{1})\rightarrow \iota (N_{1})\subseteq N_{2}$ is an isomorphism, so $V^{-1}(N_{1}) \preceq N_{2}$. Moreover, $V^{-1}(N_{1})$ finitely generated, as $N_{1}$ is so. Therefore, $F^{*} \in {\mathcal{M}\Phi_{-}^{+}}^{\prime} (H_{\mathcal{A}})$. Conversely, if $F \in {\mathcal{M}\Phi_{-}^{+}}^{\prime} (H_{\mathcal{A}}),$ let 
$$H_{\mathcal{A}} = M_{1} \tilde \oplus N_{1} ‎\stackrel{F}{\longrightarrow} M_{2} \tilde \oplus N_{2}= H_{\mathcal{A}} $$
be an $ {\mathcal{M}\Phi_{-}^{+}}^{\prime}$ decomposition for $F$, then
$N_{2} \preceq N_{1} $ and $N_{2}$ is finitely generated. By the proof of Theorem 2.3 part $1) \Rightarrow 2)$ $F$  has the matrix
\begin{center}
	$\left\lbrack
	\begin{array}{ll}
	\tilde F_{1} & 0 \\
	\tilde F_{3} &  \tilde F_{4} \\
	\end{array}
	\right \rbrack
	$
\end{center}
w.r.t. the decomposition 
$$H_{\mathcal{A}} = {N_{1}}^{\bot} \tilde \oplus {N_{1}} ‎\stackrel{F}{\longrightarrow}  {N_{2}}^{\bot} \tilde \oplus N_{2}=H_{\mathcal{A}} ,$$ 
where $\tilde F_{1} ,\tilde F_{3} ,\tilde F_{4} $  are adjointable and $\tilde F_{1}$ is an isomorphism. Then $F^{*}$ has the matrix
\begin{center}
	$\left\lbrack
	\begin{array}{ll}
	\tilde F_{1}^{*} & \tilde F_{2}^{*}	 \\
	0 &\tilde F_{4}^{*} \\
	\end{array}
	\right \rbrack
	$
\end{center} 
w.r.t. the decomposition  
$$H_{\mathcal{A}} = {N_{2}}^{\bot} \tilde \oplus {N_{2}} ‎\stackrel{F^{*}} {\longrightarrow} {N_{1}}^{\bot} \tilde \oplus N_{1}=H_{\mathcal{A}} ,$$
and $\tilde F_{1}^{*}$ is an isomorphism. Hence
\begin{center}
	$F^{*}=\left\lbrack
	\begin{array}{ll}
	\tilde F_{1}^{*} & 0 \\
	0 &\tilde {\tilde F_{4}^{*}} \\
	\end{array}
	\right \rbrack
	$
\end{center} 
w.r.t. the decomposition $$H_{\mathcal{A}} =U({N_{2}}^{\bot}) \tilde \oplus U(N_{2})‎\stackrel{F^{*}}{\longrightarrow} {N_{1}}^{\bot} \tilde \oplus N_{1} =H_{\mathcal{A}},$$   where
\begin{center}
	$U=\left\lbrack
	\begin{array}{ll}
	1 & - \tilde {F_{1}^{*}}^{-1}(\tilde F_{3}^{*}) \\
	0 &  1 \\
	\end{array}
	\right \rbrack
	$
\end{center} 
w.r.t the decomposition 
$$H_{\mathcal{A}} = {N_{2}}^{\bot} \oplus N_{2} ‎\stackrel{V}{\longrightarrow} {N_{2}}^{\bot} \oplus N_{2}=H_{\mathcal{A}},$$ 
so that $U$ is an isomorphism. Since  $\iota:N_{2}\Rightarrow \iota (N_{2})\subseteq N_{1}$ is an isomorphism, then $\iota U^{-1}:U(N_{2})\rightarrow \iota (N_{2})\subseteq N_{1}$ is also an isomorphism, so $U(N_{2})\preceq N_{1}$. Thus $F^{*} \in {\mathcal{M}\Phi_{+}^{-}}^{\prime} (H_{\mathcal{A}}).$
\\
2)	Use 1) together with the fact that $$F \in \mathcal{M}\Phi (H_{\mathcal{A}}) \Leftrightarrow F^{*} \in \mathcal{M}\Phi (H_{\mathcal{A}}) $$ by Corollary 2.11 and the fact that
$$\tilde {\mathcal{M}} \Phi_{-}^{+} (H_{\mathcal{A}})={\mathcal{M}\Phi_{-}^{+}}^{\prime} (H_{\mathcal{A}}) \cap \mathcal{M}\Phi (H_{\mathcal{A}})$$
$$\tilde {\mathcal{M}} \Phi_{+}^{-} (H_{\mathcal{A}})={\mathcal{M}\Phi_{+}^{-}}^{\prime} (H_{\mathcal{A}}) \cap \mathcal{M}\Phi (H_{\mathcal{A}})$$
by Proposition 5.7.\\
3)	Use 2) together with the fact that
$$F \in  \mathcal{M}\Phi_{+} (H_{\mathcal{A}}) \setminus \mathcal{M}\Phi (H_{\mathcal{A}}))\Leftrightarrow F^{*} \in  \mathcal{M}\Phi_{-} (H_{\mathcal{A}}) \setminus \mathcal{M}\Phi (H_{\mathcal{A}}))$$ 
by Corollary 2.11 and the fact that 
$${\mathcal{M}\Phi_{+}^{-}} (H_{\mathcal{A}})= {\tilde {\mathcal{M}}\Phi_{+}^{-}} (H_{\mathcal{A}}) \cup (\mathcal{M}\Phi_{+} (H_{\mathcal{A}}) \setminus \mathcal{M}\Phi (H_{\mathcal{A}}))),$$
$${\mathcal{M}\Phi_{-}^{+}} (H_{\mathcal{A}})= {\tilde {\mathcal{M}}\Phi_{-}^{+}} (H_{\mathcal{A}}) \cup (\mathcal{M}\Phi_{-} (H_{\mathcal{A}}) \setminus \mathcal{M}\Phi (H_{\mathcal{A}})))$$ by Definition 5.1.
\end{proof}
\begin{definition}
We set $M^{a}(H_{\mathcal{A}})= \lbrace F \in B^{a}(H_{\mathcal{A}}) \mid F $ is bounded below.$\rbrace$
\end{definition}
\begin{lemma}
Let $Q^{a} (H_{\mathcal{A}})=\lbrace D \in B^{a}(H_{\mathcal{A}}) \mid D \textrm{ is surjective } \rbrace  .$ Then\\
$ F \in M^{a}(H_{\mathcal{A}}) $ if and only if $ F^{*} \in Q^{a} (H_{\mathcal{A}}) .$
\end{lemma}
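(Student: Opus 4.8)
The statement is the Hilbert $C^{*}$-module version of the classical equivalence ``$F$ is bounded below $\iff F^{*}$ is surjective''. The plan is to prove the two implications separately, using only: the norm identity $\|z\| = \sup\{\|\langle z,y\rangle\| : \|y\|\le 1\}$ valid in any Hilbert $C^{*}$-module, the adjoint relation $\langle Fx,y\rangle = \langle x,F^{*}y\rangle$, the open mapping theorem (recall $H_{\mathcal A}$ is a Banach space), the fact from \cite[Theorem 2.3.3]{MT} that an adjointable operator with closed range has orthogonally complementable range, and the fact (used throughout the paper) that an adjointable bijection has adjointable inverse.

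For the implication $F \in M^{a}(H_{\mathcal A}) \Rightarrow F^{*} \in Q^{a}(H_{\mathcal A})$: if $F$ is bounded below, then $F$ is injective and ${\rm ran}\,F$ is closed. By \cite[Theorem 2.3.3]{MT}, ${\rm ran}\,F$ is orthogonally complementable, so the inclusion $J:{\rm ran}\,F \to H_{\mathcal A}$ is adjointable with $J^{*}J = {\rm id}_{{\rm ran}\,F}$. Writing $F = J\tilde F$ with $\tilde F:H_{\mathcal A}\to{\rm ran}\,F$ the corestriction of $F$, we get $\tilde F = J^{*}F \in B^{a}(H_{\mathcal A},{\rm ran}\,F)$, and $\tilde F$ is a bijection since $F$ is injective onto ${\rm ran}\,F$; hence $\tilde F^{*}$ is a bijection as well. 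Then $F^{*} = \tilde F^{*}J^{*}$, and since $J^{*}$ is surjective onto ${\rm ran}\,F$ and $\tilde F^{*}$ is a bijection of ${\rm ran}\,F$ onto $H_{\mathcal A}$, the operator $F^{*}$ is surjective, i.e.\ $F^{*}\in Q^{a}(H_{\mathcal A})$. (Equivalently, one may invoke \cite[Theorem 2.3.3]{MT} to get $H_{\mathcal A} = \ker F \oplus {\rm ran}\,F^{*}$ and conclude ${\rm ran}\,F^{*} = H_{\mathcal A}$ from $\ker F = \{0\}$, exactly as in the proof of Corollary 2.15.)

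For the implication $F^{*} \in Q^{a}(H_{\mathcal A}) \Rightarrow F \in M^{a}(H_{\mathcal A})$: since $F^{*}$ is a bounded surjection between Banach spaces, the open mapping theorem yields $\delta > 0$ with $\{F^{*}y : \|y\|\le 1\} \supseteq \{w \in H_{\mathcal A} : \|w\|\le\delta\}$. Then for every $x \in H_{\mathcal A}$,
$$\|Fx\| = \sup_{\|y\|\le 1}\|\langle Fx,y\rangle\| = \sup_{\|y\|\le 1}\|\langle x,F^{*}y\rangle\| \ge \sup_{\|w\|\le\delta}\|\langle x,w\rangle\| = \delta\|x\|,$$
so $F$ is bounded below, i.e.\ $F \in M^{a}(H_{\mathcal A})$.

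I do not expect a genuine obstacle. The only points requiring care are the passage from closed range to orthogonal complementability of ${\rm ran}\,F$ and the adjointability of the pieces in the factorization $F = J\tilde F$ (both supplied by \cite[Theorem 2.3.3]{MT} and the invertibility of adjointable isomorphisms, as used elsewhere in the paper), together with the verification that the norm on $H_{\mathcal A}$ is recovered as $\sup_{\|y\|\le 1}\|\langle\,\cdot\,,y\rangle\|$, which follows from the Cauchy--Schwarz inequality and testing against $y = z/\|z\|$.
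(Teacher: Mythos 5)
Your proof is correct, and in the converse direction it takes a genuinely different route from the paper's. The paper proves both implications through the Mishchenko closed-range theorem \cite[Theorem 2.3.3]{MT}: for $F \in M^{a}(H_{\mathcal{A}}) \Rightarrow F^{*} \in Q^{a}(H_{\mathcal{A}})$ it uses, exactly as in your parenthetical remark, the decomposition $H_{\mathcal{A}} = \ker F \oplus {\rm ran}\, F^{*}$ together with $\ker F = \{0\}$; for the converse it first gets injectivity of $F$ from $\ker F = ({\rm ran}\, F^{*})^{\perp} = \{0\}$, then invokes the same theorem again to pass from closedness of ${\rm ran}\, F^{*} = H_{\mathcal{A}}$ to closedness of ${\rm ran}\, F$, and only then applies the open mapping theorem to $F$ as a bijection onto its closed image. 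You instead apply the open mapping theorem directly to the surjection $F^{*}$ and combine it with the identity $\|z\| = \sup_{\|y\|\le 1}\|\langle z,y\rangle\|$ to get the explicit estimate $\|Fx\| \ge \delta\|x\|$; this bypasses \cite[Theorem 2.3.3]{MT} in that direction entirely and is more elementary and quantitative (the only small imprecision is that the open mapping theorem yields an open ball inside $F^{*}(\{\|y\|<1\})$, so you should either shrink $\delta$ slightly or take the supremum over the open ball --- the lower bound survives either way). In the forward direction your factorization $F = J\tilde F$, $F^{*} = \tilde F^{*}J^{*}$ is also fine, but the step asserting that $\tilde F^{*}$ is a bijection implicitly rests on the same closed-range theorem (or on the fact that the adjoint of an adjointable isomorphism is invertible), so the parenthetical version you give --- which is verbatim the paper's argument --- is the leaner route there.
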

\begin{proof}
Let $ F \in M^{a}(H_{\mathcal{A}}) .$ By the proof of \cite[ Theorem 2.3.3] {MT}, as ${\rm ran} F$ is closed in this case, we have that ${\rm ran} F^{*} $ is also closed. Moreover, by the proof of \cite[ Theorem 2.3.3] {MT} since ${\rm ran} F^{*}$ is closed, we also have $H_{\mathcal{A}}= \ker F \oplus {\rm ran} F^{*}$. Since $\ker F = \lbrace 0 \rbrace  ,$ it follows that $H_{\mathcal{A}}={\rm ran} F^{*}. $\\
Conversely, if $ F^{*} \in Q^{a} (H_{\mathcal{A}})  ,$ then $\ker F={{\rm ran} F^{*}}^{\perp} = \lbrace 0 \rbrace   ,$ so $F$ is injective. Moreover, since $ {\rm ran} F^{*}= H_{\mathcal{A}} $ is closed, then ${\rm ran} F$ is closed also, (again by the proof of \cite[ Theorem 2.3.3] {MT}). By the Banach open mapping theorem, it follows that $F$ is an isomorphism from $ H_{\mathcal{A}} $ onto its image. Thus $F$ is bounded below.
\end{proof}
\begin{corollary}
Let $ D \in B^{a}(H_{\mathcal{A}})  .$ The following statements are equivalent:\\
1)	$D \in {\mathcal{M}\Phi_{-}^{+}}^{\prime}(H_{\mathcal{A}}) $\\
2)	There exist $ Q \in Q^{a} (H_{\mathcal{A}}) , K \in K(H_{\mathcal{A}}) $ s.t. $ D=Q+K .$
\end{corollary}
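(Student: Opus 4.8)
The plan is to obtain this corollary from Theorem 5.10 by a duality argument, exactly as Lemma 5.13 links $M^{a}(H_{\mathcal{A}})$ with $Q^{a}(H_{\mathcal{A}})$ and Proposition 5.11(1) links ${\mathcal{M}\Phi_{+}^{-}}^{\prime}(H_{\mathcal{A}})$ with ${\mathcal{M}\Phi_{-}^{+}}^{\prime}(H_{\mathcal{A}})$. Beyond those three results the only facts I would need are that $F\mapsto F^{*}$ is an involution on $B^{a}(H_{\mathcal{A}})$ and that $K(H_{\mathcal{A}})$, being a closed two-sided ideal of the $C^{*}$-algebra $B^{a}(H_{\mathcal{A}})$, is self-adjoint, so that $K\in K(H_{\mathcal{A}})$ implies $K^{*}\in K(H_{\mathcal{A}})$.

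For $1)\Rightarrow 2)$ I would start from $D\in{\mathcal{M}\Phi_{-}^{+}}^{\prime}(H_{\mathcal{A}})$ and apply Proposition 5.11(1) (to $F=D^{*}$, using $D=(D^{*})^{*}$) to get $D^{*}\in{\mathcal{M}\Phi_{+}^{-}}^{\prime}(H_{\mathcal{A}})$. Theorem 5.10 then supplies $D_{0}\in B^{a}(H_{\mathcal{A}})$ bounded below and $K_{0}\in K(H_{\mathcal{A}})$ with $D^{*}=D_{0}+K_{0}$. Taking adjoints yields $D=D_{0}^{*}+K_{0}^{*}$, where $K_{0}^{*}\in K(H_{\mathcal{A}})$ and, by Lemma 5.13, $D_{0}^{*}\in Q^{a}(H_{\mathcal{A}})$ because $D_{0}\in M^{a}(H_{\mathcal{A}})$. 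So $Q:=D_{0}^{*}$ and $K:=K_{0}^{*}$ witness $2)$.

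For $2)\Rightarrow 1)$ I would reverse the chain: from $D=Q+K$ with $Q\in Q^{a}(H_{\mathcal{A}})$ and $K\in K(H_{\mathcal{A}})$, pass to $D^{*}=Q^{*}+K^{*}$; by Lemma 5.13 (applied to $Q^{*}$, since $Q=(Q^{*})^{*}$) we get $Q^{*}\in M^{a}(H_{\mathcal{A}})$, and $K^{*}\in K(H_{\mathcal{A}})$, so Theorem 5.10 gives $D^{*}\in{\mathcal{M}\Phi_{+}^{-}}^{\prime}(H_{\mathcal{A}})$, whence $D\in{\mathcal{M}\Phi_{-}^{+}}^{\prime}(H_{\mathcal{A}})$ by Proposition 5.11(1).

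I do not anticipate a genuine obstacle here: the argument is purely formal once Theorem 5.10, Lemma 5.13 and Proposition 5.11 are in hand, the one point deserving a line of justification being that $K(H_{\mathcal{A}})$ is $*$-closed, which is automatic for an ideal of a $C^{*}$-algebra. Should a self-contained proof be preferred, one can instead mirror the proof of Theorem 5.10: given an ${\mathcal{M}\Phi_{-}^{+}}^{\prime}$-decomposition of $D$ with $N_{2}$ finitely generated and an embedding $\iota\colon N_{2}\hookrightarrow N_{1}$ coming from $N_{2}\preceq N_{1}$, replace the block $D_{4}$ by a surjection $N_{1}\to N_{2}$ obtained by composing the projection of $N_{1}$ onto $\iota(N_{2})$ with $\iota^{-1}$, thereby producing a surjective adjointable operator $Q$ (adjointability being guaranteed since $N_{2}$ is finitely generated, hence self-dual and orthogonally complementable) with $D-Q$ having range inside the finitely generated module $N_{2}$, hence compact; the converse direction would then invoke Lemma 5.9. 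The duality proof is the shorter one, and is the one I would write out.
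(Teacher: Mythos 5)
Your argument is exactly the paper's: the corollary is obtained from Theorem 5.10, Proposition 5.11 part 1) and Lemma 5.13 by passing to adjoints, with the $*$-invariance of $K(H_{\mathcal{A}})$ being the only (routine) extra observation. The proposal is correct and matches the paper's proof.
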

\begin{proof}
Follows from Theorem 5.10, Proposition 5.11 part 1) and  Lemma 5.13 by passing to the adjoints. 
\end{proof}
	\dedicatory{\textbf{Acknowlegedgement.} I am especially grateful to my main supervisor Professor Vladimir M. Manuilov for careful reading of the first version of this paper and for detailed comments and suggestions which led to the improved presentation of the paper.
	I am also grateful to my supervisor Professor Dragan S. \DJ{}or\dj{}evi\'{c} for suggesting the research topic of this paper and for introducing to me the relevant reference books. In addition, I am also grateful to my external supervisor Professor Camillo Trapani for reading my paper and for inspiring and useful comments and advice. Finally, I am grateful to the Refere for careful reading of my paper and for the inspiring and useful comments and suggestions that led to the final, improved presentation of the paper.
}
\bibliographystyle{amsplain}

\end{document}